\newtheorem{theorem}{Theorem}[section]
\newtheorem{corollary}[theorem]{Corollary}
\newtheorem{lemma}[theorem]{Lemma}
\newtheorem{proposition}[theorem]{Proposition}
\theoremstyle{definition}
\newtheorem{definition}[theorem]{Definition}
\newtheorem{example}[theorem]{Example}
\theoremstyle{remark}
\newtheorem{remark}[theorem]{Remark}
\numberwithin{equation}{section}
\newcommand{\eps}{\varepsilon}
\newcommand{\calL}{\mathcal{L}}
\newcommand{\calF}{\mathcal{F}}
\newcommand{\calA}{\mathcal{A}}
\newcommand{\calP}{\mathcal{P}}
\newcommand{\calR}{\mathcal{R}}
\newcommand{\calT}{\mathcal{T}}
\newcommand{\calC}{\mathcal{C}}
\newcommand{\calB}{\mathcal{B}}
\newcommand{\calE}{\mathcal{E}}
\newcommand{\calH}{\mathcal{H}}
\newcommand{\calK}{\mathcal{K}}
\newcommand{\E}{\operatorname{\mathds{E}}} 
\renewcommand{\P}{\operatorname{\mathds{P}}} 
\newcommand{\R}{\mathds{R}}
\newcommand{\C}{\mathds{C}}
\newcommand{\HH}{\mathds{H}}
 \newcommand{\Q}{\mathds{Q}}
\newcommand{\bfA}{{\mathbf{A}}}
\newcommand{\prt}{\partial}
\newcommand{\lra}{\leftrightarrow}
\newcommand{\ol}{\overline}
\newcommand{\exc}{\mathrm{e}}
\newcommand{\wh}{\widehat}
\newcommand{\wt}{\widetilde}
\newcommand{\cadlag}{c\`adl\`ag }
\DeclareMathOperator{\barg}{{\bf arg}}
\DeclareMathOperator{\dist}{dist}
\DeclareMathOperator{\interior}{Int}
\DeclareMathOperator{\Var}{Var}
\DeclareMathOperator{\Cov}{Cov}
\DeclareMathOperator{\ntlim}{NT-lim}
\def\Re{{\rm Re\,}}
\def\Im{{\rm Im\,}}
\def\bone{{\bf 1}}
\def\n{{\bf n}}
\def\bt{{\bf t}}
\def\bv{{\bf v}}
\def\ball{{\calB}}
\def\bDelta{{\bf \Delta}}
\def\Tkb{T^k_b}
\title[Obliquely reflected Brownian motion]{Obliquely reflected Brownian motion in non-smooth planar domains}
\author{Krzysztof Burdzy,    Zhen-Qing Chen,  Donald Marshall  and Kavita Ramanan}
\address{KB, ZC, DM: Department of Mathematics, Box 354350, University of Washington, Seattle, WA 98195}
\email{burdzy@math.washington.edu}
\email{zqchen@uw.edu}
\email{marshall@math.washington.edu}
\address{KR: Division of Applied Mathematics,
Brown University,
182 George Street,
Providence, RI 02912}
\email{Kavita\_Ramanan@brown.edu}
\thanks{Research supported in part by NSF Grants
DMS-1206276,  DMS-0900814 and DMS-1407504}
\begin{document}

\begin{abstract}  
 We construct obliquely reflected Brownian motions in all bounded
 simply connected planar domains, including non-smooth domains, with general reflection
vector fields on the boundary.  Conformal mappings and excursion theory
are our main technical tools. 
A key intermediate step, which may be of independent interest,
is an alternative characterization of reflected Brownian
motions in smooth bounded planar domains with a given field of angles of oblique
reflection on the boundary in terms of a pair of quantities, namely an
integrable positive harmonic function, which represents
the stationary distribution of the process, and a real number that
represents, in a suitable sense, the
asymptotic rate of rotation of the process around a reference point in the domain.
Furthermore, we also show that any obliquely reflected Brownian motion in a
simply connected Jordan domain can be obtained as a suitable limit of
obliquely reflected Brownian motions in smooth domains. 
\end{abstract}

\maketitle

\section{Introduction}\label{intro}

Obliquely reflected Brownian motion (ORBM) arises naturally in some
applied probabilistic models, for example, in queuing theory; see
\cite{Ram06,Will} and the references therein. 
This part of the theory of ORBMs is mostly concerned with processes confined to the positive quadrant of the Euclidean space
 with constant reflection direction on each face.
ORBMs in non-smooth (fractal) domains serve as a toy model for some biological phenomena (see \cite{reduction}). In this paper, we will construct and investigate  ORBMs in bounded simply connected planar domains, including non-smooth domains, with variable and possibly non-smooth reflection directions. Conformal mappings will be our main technical tool. The construction of ORBM in non-smooth domains is difficult because the process (if it exists) is non-symmetric and, therefore, the (symmetric) Dirichlet form approach (see \cite{Fuku, Chen} and the references therein), very successful in the case of normally reflected Brownian motion, is not applicable to ORBM with general non-smooth reflection directions.

A conceptual problem with obliquely reflected Brownian motion is that the oblique
reflection represents, in heuristic terms, a slight push away from the boundary
accompanied by a proportional push along the boundary. In fractal domains, the
concepts of ``normal'' direction at a boundary point and moving ``along'' the
boundary do not have a meaning according to classical definitions. Hence
describing and classifying ORBMs in non-smooth domains requires a new approach.
The key to our study is the observation that ORBMs in smooth domains can be fully and uniquely classified using
two ``parameters''---an integrable positive harmonic function $h$ and a real
number $\mu_0$.  The
harmonic function $h$ represents the density of the stationary distribution of the
process and the real number $\mu_0$ represents, in an appropriate sense, the asymptotic rate of rotation around a reference point in the domain.
This alternative characterization of ORBM will allow us to construct and investigate ORBM in non-smooth planar domains with general reflection on the boundary.
More specifically,
we will first show in Theorems \ref{f3.2} and  \ref{j15.5} that $h$ and $\mu_0$ provide a parametrization of ORBMs in the unit disc alternative to the reflection vector field on the boundary. Then we will show  in Theorems \ref{j18.3}-\ref{j18.8}  how ORBMs in non-smooth domains can be constructed and classified.

Yet another ``parametrization'' of ORBM's in simply connected domains is given by ``rotation rates'' $\mu(z)$ of the process around points $z$ in the domain. Every function $\mu(z)$ representing rotation rates is harmonic but not every harmonic function $\mu(z)$ represents rotation rates for an ORBM.

We will also discuss some ORBMs with degenerate (``tangential'') ``reflection" along the boundary. The infinitely strong tangential push generates jumps along the boundary, a feature not normally associated with models labeled ``Brownian.''
We will show that ORBMs with ``degenerate'' boundary behavior are processes that recently appeared in the probabilistic literature in a different context.

The present paper can be viewed as a first step in a much more
ambitious project to define ORBMs in $d$-dimensional non-smooth
domains with $d\geq 2$. In the two-dimensional case, especially in
simply connected domains, one can give a meaning to the ``angle of
reflection'' even in domains with fractal boundary by approximating
the boundary with continuous curves, defining the angle of reflection
on these curves, then defining the corresponding ORBMs and finally
passing to the limit (see Theorem \ref{j18.8} below). The same program
is questionable in higher dimensional domains. It is not clear how to
define the direction of reflection on a fractal boundary or how to
define the direction of reflection on a sequence of approximating
smooth surfaces in a ``consistent'' way. We believe that our approach
via the stationary density  (see \cite{KanRam} for a
characterization of stationary distributions of ORBMs in
$d$-dimenesional piecewise smooth domains) and appropriate ``rotations about
$(d-2)$-dimensional sets'' may be the right approach to the
high-dimensional version of the problem but we leave it for a future project.

There are two classes of domains to which some of our results should extend in a fairly straightforward way: unbounded simply connected planar domains and finitely connected bounded planar domains. These generalizations are also left for a future article.

Some results for ORBM in multidimensional domains were obtained in
\cite{DupIsh, DupIshCorr, Ram06, Will} under rather restrictive assumptions about smoothness of the boundary of the domain and/or the direction of reflection.
The theory of non-symmetric Dirichlet forms was used to construct families of ORBMs in \cite{KKY,duarte} under fairly strong assumptions.
A fairly explicit formula for the stationary distribution for ORBM in
a smooth planar domain was derived in \cite{HLS}. 
Some results on convergence of ORBMs have been recently obtained in \cite{Saran2, Saran1} but the setting of those papers is considerably different from ours.

The article is organized as follows. Section \ref{sec:prelim} contains a review of some basic probabilistic and analytic facts used in the article. It also contains a theorem relating reflection vector fields on the boundary of a domain and harmonic functions inside
the domain; this theorem is the fundamental analytic ingredient of our arguments. Our main results are stated in Section \ref{sec:main}. Their proofs are given in Section \ref{sec:proofs}.
Our proofs are based in part on ideas developed in \cite{BurMar}.

\section{Preliminaries}\label{sec:prelim}

\subsection{Reflected Brownian motion}

We will identify $\C$ and $\R^2$.
Let $\ball(x,r) = \{z\in \R^2: |x-z| < r\}$ and $D_* = \ball(0,1)$.
Suppose that $D\subset \C$ is a bounded open set with smooth boundary and $\theta: \prt D \to (-\pi/2, \pi/2)$ is a Borel measurable function satisfying $\sup_{x\in \prt D} |\theta(x)| < \pi/2$.
Let $\n(x)$ denote the unit
inward normal vector at $x\in\prt D$ and let $\bt(x) = e^{-i \pi/2} \n(x)$ be the unit vector tangent to $\prt D$ at $x$.

Let $\bv_\theta(x) = \n(x) + \tan \theta(x) \bt(x)$, let $B$ be standard
two-dimensional Brownian motion and consider the following
Skorokhod equation,
\begin{align}
 X_t = x_0 + B_t + \int_0^t  \bv_\theta (X_s) dL_s,
 \qquad \hbox{for } t\geq 0. \label{j13.1}
\end{align}
Here $x_0\in \ol D$ and $L$ is the local time of $X$ on $\prt
D$. In other words, $L$ is a non-decreasing continuous 
process that does not increase when $X$ is in $D$, i.e.,
$\int_0^\infty \bone_{D}(X_t) dL_t = 0$, almost surely. If $\theta$ is $C^2$ then equation
\eqref{j13.1} has a unique pathwise solution $(X,L)$ such
that $X_t \in \ol D$ for all $t\geq 0$, by \cite[Cor. 5.2]{DupIsh}
(see also \cite{DupIshCorr}). The process $X$ is a continuous strong
Markov process on $\ol D_*$, and is called obliquely reflected 
Brownian motion in $D$ with reflecting vector field  $\bv_\theta$. 
When $\theta \equiv 0$, that is, when $\bv_\theta = \n$, $X$ is called normally reflected Brownian motion in $D$.
The goal of this paper is to construct and characterize obliquely reflected Brownian motions when $\theta$ is non-smooth and can possibly take values in $[-\pi/2, \pi/2]$, and when $\prt D$ is also possibly non-smooth.

Consider the case when $D=D_*$ and recall that we are assuming that $\theta$ is measurable and
$\Vert \theta \Vert_{\infty} < \pi/2.$
Then one can show that
\eqref{j13.1} has a unique pathwise solution using the decomposition of the process in $D_*$
into the radial and angular parts, and an argument similar to that in
\cite[Remark 4.2 (ii)]{LS}.
In both cases discussed above, the ORBM $X$ is a strong Markov process.
Since $X$ does not visit the origin as it behaves like a Brownian motion
inside the disk $D_*$, applying
It\^o's formula to  $Y_t=f(X_t)$ with $f(x)=|x|$,  we obtain
\begin{equation}\label{e:2.31}
d Y_t = dW_t + \frac{1}{Y_t} dt - dL_t ,
\end{equation}
where  $W_t=\int_0^t \frac{X_s}{|X_s|}\cdot dB_s$ is a one-dimensional Brownian motion. Note that $L_t$ increases only when $Y_t=1$. Thus $Y_t$ is a 2-dimensional Bessel process in $(0, 1]$ reflected at $1$. It is known (see \cite{BC}) that the one-dimensional SDE \eqref{e:2.31} has a unique strong solution and all its weak solutions have the same distribution.
It follows that  the distribution of $(|X|, L)$ is independent of the
reflection angle $\theta$. Theorem \ref{j15.5} proved below implies
that this  property continues to hold for ORBMs in $D_*$ with non-smooth reflection angles 
$\theta$ including those that could be tangential in some subset of the boundary
$\partial D_*$. 

It is known that (see Theorem \ref{j25.1}(ii) below) the submartingale problem formulation of ORBM is equivalent to the one given above.
Let $\calC$ be the family of all real functions $f \in C^2(\ol D)$ such that
\begin{align*}
\frac \prt{\prt \n}f(x) + \tan \theta(x) \frac \prt {\prt \bt} f(x) \geq 0,
\qquad x\in \prt D.
\end{align*}
We will say that $\{\P_z: z\in \ol D\}$ is a solution of the submartingale problem
defining an ORBM with the angle of reflection $\theta$ if $\P_z(X_0 = z) = 1$ for every $z\in \ol D$, and
\begin{align}\label{f3.1}
f(X_t) - \frac 12 \int_0^t \Delta f (X_s) ds , \qquad t\geq 0,
\end{align}
is a submartingale under $\P_z$ for every $z\in \ol D$ and $f\in \calC$.

\subsection{Review of excursion theory}\label{sec:exc}

We will use excursion theory of Brownian motion in our characterization of obliquely reflected Brownian motion.
This section contains a brief review of the excursion theory needed
in this paper. See, for example, \cite{Mais} for the foundations of the
theory in the abstract setting and \cite{BurBook} for the special
case of excursions of Brownian motion. Although \cite{BurBook} does
not discuss reflected Brownian motion, all of the results we will use from
that book readily apply in the present context.

Let $\P_{x}$ denote the distribution of the process $X$
with $X_0=x$, defined by \eqref{j13.1} or \eqref{f3.1}, and let $\E_{x}$ be the
corresponding expectation. Let $\P_x^D$ denote the distribution
of Brownian motion starting from $x\in D$ and killed upon
exiting $D$.

An ``exit system'' for excursions of an ORBM $X$ from $\prt D$ is a pair $(L^*_t, H^x)$ consisting of
a positive continuous additive functional $L^*_t$ of $X$
and a family
of ``excursion laws'' $\{H^x\}_{x\in\prt D}$.
 Let $\bDelta$ denote the ``cemetery''
point outside $\ol D$ and let $\calC$ be the space of all
functions $f:[0,\infty) \to \ol D\cup\{\bDelta\}$ that  are
continuous and take values in $\ol D$ on some interval
$[0,\zeta)$, and are equal to $\bDelta$ on $[\zeta,\infty)$.
For $x\in \prt D$, the excursion law $H^x$ is a $\sigma$-finite
(positive) measure on $\calC$, such that the canonical process is
strong Markov on $(t_0,\infty)$, for every $t_0>0$, with 
transition probabilities
$\P_\centerdot^D$.
 Moreover, $H^x$ gives zero
mass to paths that do not start from $x$. We will be concerned
only with the ``standard'' excursion laws; see Definition 3.2
of \cite{BurBook}. For every $x\in \prt D$ there exists a unique
standard excursion law $H^x$ in $D$, up to a multiplicative
constant.

Excursions of $X$ from $\prt D$ will be denoted $\exc$ or $\exc_s$,
i.e., if $s< u$, $X_s,X_u\in\prt D$, and $X_t \notin \prt D$
for $t\in(s,u)$ then $\exc_s = \{\exc_s(t) = X_{t+s} ,\,
t\in[0,u-s)\}$, $\zeta(\exc_s) = u -s$ and $\exc_s(t)
= \bDelta$ for $t\geq \zeta$. By convention, $\exc_t \equiv \bDelta$ if
$\inf\{s> t: X_s \in \prt D\} = t$.

Let $\sigma_t = \inf\{s\geq 0: L^*_s  >  t\}$ and
$\calE_u = \{\exc_s: s < \sigma_u\}$.
Let $I$ be
the set of left endpoints of all connected components of $(0,
\infty)\setminus \{t\geq 0: X_t\in \partial D\}$. The following
is a special case of the exit system formula of \cite{Mais}. For
every $x\in \ol D$, every bounded predictable process $V_t$ and
every universally measurable function $f:\, \calC\to[0,\infty)$
that vanishes on
excursions $\exc_t$ identically equal to $\bDelta$, we have
\begin{align}\label{exitsyst}
\E_x \left[ \sum_{t\in I} V_t \cdot f ( \exc_t) \right]
= \E_x \left[\int_0^\infty V_{\sigma_s} H^{X(\sigma_s)}(f) ds\right]
= \E_x \left[\int_0^\infty V_t H^{X_t}(f) dL^*_t\right].  
\end{align}
Here and
elsewhere $H^x(f) = \int_\calC f dH^x$. Informally speaking,
\eqref{exitsyst} says that the right continuous version
$\calE_{t+}$ of the process of excursions is a Poisson
point process on the local time scale with variable intensity $H^{{}^\centerdot}(f)$.

The normalization of the exit system is somewhat arbitrary, for
example, if $(L^*_t, H^x)$ is an exit system and
$c\in(0,\infty)$ is a constant then $(cL^*_t, (1/c)H^x)$ is
also an exit system. One can even make $c$ dependent on
$x\in\prt D$. Theorem 7.2 of \cite{BurBook} shows how to choose a
``canonical'' exit system; that theorem is stated for the usual
planar Brownian motion but it is easy to check that both the
statement and the proof apply to  normally reflected Brownian motion (i.e., ORBM with $\theta \equiv 0$).
According to that result, if $D$ is Lipschitz then we can take $L^*_t$ to be the
continuous additive functional $L^X$ whose Revuz measure is a
constant multiple of the surface area measure $dx$
on $\prt D$ and
$H^x$'s to be standard excursion laws normalized so that
\begin{align}\label{eq:M5.2}
H^x (A) = \lim_{\delta\downarrow 0}
\frac 1  \delta \P^D_{x + \delta\n(x)} (A),
\end{align}
for any event $A$ in a $\sigma$-field generated by the process
on an interval $[t_0,\infty)$, for any $t_0>0$. The Revuz
measure of $L^X$ is the measure $dx/(2|D|)$ on $\prt D$, i.e.,
if the initial distribution of $X$ is the uniform probability
measure $\mu$ on $D$,
then
\begin{align}\label{feb19.2}
\E_\mu \left[\int_0^1 \bone_A (X_s) dL^X_s\right]
= \int_A \frac{dx}{2|D|},
\end{align}
for any Borel set $A\subset \prt D$.
It has
been shown in \cite{BCJ} that $L^*_t=L^X_t$.

\medskip

Let $K_x(\,\cdot\,)$ denote the Poisson kernel for $D_*$, that is, $K_x(\,\cdot\,)$ vanishes continuously on $\prt D_*\setminus \{x\}$ and is harmonic and strictly positive in $D_*$. We normalize $K_x$ so that $K_x(0) =1$ for all $x$. It is easy to see that the following
equality holds up to a multiplicative constant,
\begin{align}\label{jan16.1}
\int_{A} K_x(y) dy
= \lim_{\delta\downarrow 0}
\frac 1  \delta
\E^{D_*}_{x + \delta\n(x)} \left[\int_0^\infty \bone_A (X_s) ds\right], \qquad A\subset D_*.
\end{align}
In view of \eqref{eq:M5.2}, this means that $K_x(\,\cdot\,)$ is (a constant multiple of) the density of the expected occupation measure for the excursion law $H^x$, i.e.,
\begin{align}\label{jan17.1}
\int_{A} K_x(y) dy
=
H^x\left( \int_0^\infty \bone_A (X_s) ds\right), \qquad A\subset D_*.
\end{align}
We omitted the multiplicative constant in \eqref{jan16.1} and \eqref{jan17.1} because it is equal to 1; see the proof of Theorem \ref{j18.1} (ii).

\subsection{Analytic preliminaries}

Recall that $\ball(x,r) = \{z\in \R^2: |x-z| < r\}$ and $D_* := \ball(0,1)$.
Let $\theta: \prt D_* \to [-\pi/2, \pi/2]$ be a Borel measurable function.
Typically, $|dx|$ will
refer to the arc length measure on $\prt D_*$ and $dz$ will refer to the
two-dimensional Lebesgue measure on $ D_*$.
The notation $|A|$ will represent either the arc length measure of $A\subset\prt D_*$ or the two-dimensional Lebesgue measure of $A\subset D_*$; the meaning should be clear from the context.
Let $\|\,\cdot\,\|_{L^1(D)}$ denote the $L^1$ norm for real functions on an open bounded set $D$ with
respect to two-dimensional Lebesgue measure $dz$ on $D$ and let $L^1(D)$ be the family of real functions in $D$ with finite $L^1$ norm. We will abbreviate $\|\,\cdot\,\|_{L^1(D_*)}$ as $\|\,\cdot\,\|_1$.
Similar conventions will apply to $L^\infty=L^\infty(\partial D_*)$ with respect to
the measure $|dx|$ on $\prt D_*$.
As usual, we identify functions that are equal to
each other a.e. $|dx|$ on $\prt D_*$.

For a function $f$ and constant $c$, the notation $f\not\equiv c$ will mean that $f$ is not identically equal to $c$. If $f$ is harmonic and non-negative in $D_*$ then
\begin{align*}
\|f\|_1 = \int_0^1 \int_0^{2\pi} f(r e^{it}) dt~r dr = \pi f(0).
\end{align*}
If the non-tangential limit of $f(z)$ at $x\in \prt D_*$ exists, we
denote it by $\ntlim_{z\to x} f(z)$. If $f\in L^1(\prt D_*)$ then the
harmonic extension of $f$ to $D_*$, given by the Poisson integral, has
nontangential limits equal to $f$ a.e.. We will follow the usual
convention of using the same letter $f$ to denote the harmonic
extension. If $f$ is harmonic in $D_*$, let $\wt f$ denote the harmonic conjugate of $f$
that vanishes at $0$.

\medskip

Define
\begin{align*}
\calT&=\{\theta\in L^{\infty}(\prt D_*): \|\theta\|_{\infty} \le
\pi/2,~ \theta \not\equiv \pi/2, \text{ and } ~\theta\not\equiv
-\pi/2\},\\\\
\calB &=\{\theta: \theta \text{   is harmonic in } D_*  \text{ and }
|\theta(z)| < \pi/2 \text{  for all } z\in D_*\}, \\\\
\calH &= \{ (h,\mu_0): h \text{ is harmonic in } D_*, ~h(z) > 0
\text{ for all }
z\in D_*, ~\|h\|_1 = \pi h(0)= 1 \text{ and }  \mu_0 \in
\R\},\\
&\text{and }\\
\calR&=\{\mu: \mu \text{ is harmonic in } D_* \text{ and its harmonic conjugate } \wt \mu(z)
> -1 \text{ for all } z\in D_*\}.\\
\end{align*}

The following theorem relates these spaces. See \eqref{ju21.21},
\eqref{aug28.2}, and Corollary \ref{aug28.3} for additional formulae.

\begin{theorem}\label{aug27.0} There are one-to-one correspondences
\begin{align*}
\calT \leftrightarrow \calB, \quad\quad&\theta(x) \leftrightarrow \theta(z);\\
\calH \leftrightarrow \calR, \quad\quad&(h(z),\mu_0) \leftrightarrow \mu(z);\\
\calB \leftrightarrow \calH, \quad\quad&\theta(z) \leftrightarrow (h(z),\mu_0);
\end{align*}
given by
\begin{align}
\theta(z)&=\Re \int_{\prt D_*}\frac{x+z}{x-z} \theta(x)
\frac{|dx|}{2\pi},\label{aug27.1}\\
\theta(x)&= \ntlim_{z\to x} \theta(z) ~~a.e.~|dx|,\label{aug27.2}\\
\mu(z)&= \mu_0-\pi \wt{h}(z),\label{aug27.3}\\
h(z)&=(\wt{\mu}(z) +1)/\pi \text{ and }\mu_0=\mu(0),\label{aug27.4}\\
h(z)&=  \frac{e^{\wt \theta(z)}\cos \theta(z)}{\pi \cos \theta(0)}
\ \text{ and } \  \mu_0=\tan \theta(0),\label{aug27.5}\text{ and }\\
\theta(z)&=-\arg(h(z)+i \wt{h}(z) -i \mu_0/\pi). \label{aug27.6}
\end{align}
\text{ Moreover }
\begin{align}
\mu(z)&=\pi h(z)\tan \theta(z)
=\frac{1}{2}
\lim_{r\uparrow 1} \int_{|x|=r} \Re\biggl(\frac{x+z}{x-z}\biggl) h(x)
\tan \theta(x) |dx|\label{aug27.7}
\end{align}
\text{and}
\begin{align}
\theta(z)=-\arg(h(z)-i\mu(z)/\pi).\label{sept13.1}
\end{align}
\end{theorem}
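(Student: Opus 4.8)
The plan is to prove the three displayed bijections one at a time, reading off the ``Moreover'' identities \eqref{aug27.7}--\eqref{sept13.1} at the end; the whole argument is essentially bookkeeping, the only genuinely analytic input being the classical boundary-value theory (Fatou's theorem together with the Poisson representation of bounded harmonic functions) used for $\calT\leftrightarrow\calB$. For that first correspondence: if $\theta\in\calT$, then \eqref{aug27.1} is the Poisson integral of $\theta$, so $\theta(z)$ is harmonic with $|\theta(z)|\le\|\theta\|_\infty\le\pi/2$; were there equality at an interior point, the maximum principle would force $\theta(z)\equiv\pm\pi/2$ and hence $\theta(x)\equiv\pm\pi/2$ a.e., which is excluded, so $\theta\in\calB$. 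Conversely, an element $\theta$ of $\calB$ is a bounded harmonic function, hence the Poisson integral of its nontangential boundary limits, which exist a.e.\ by Fatou, lie in $[-\pi/2,\pi/2]$, and are not a.e.\ equal to $\pm\pi/2$ (otherwise $\theta\equiv\pm\pi/2$ in $D_*$); thus $\theta\in\calT$ and \eqref{aug27.2} holds. Since the Poisson integral of an $L^\infty$ boundary function recovers that function as its a.e.\ nontangential limit, these two maps are mutually inverse.

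For $\calB\leftrightarrow\calH$ I would introduce the analytic function $F=\theta+i\wt\theta$ on $D_*$ (for $\theta\in\calB$) and use $e^{-iF}=e^{\wt\theta}(\cos\theta-i\sin\theta)$. Then $h$ as in \eqref{aug27.5} is $(\pi\cos\theta(0))^{-1}\Re(e^{-iF})$: it is harmonic, strictly positive because $|\theta|<\pi/2$, and satisfies $h(0)=1/\pi$, so $\|h\|_1=\pi h(0)=1$ (using $\|f\|_1=\pi f(0)$ for nonnegative harmonic $f$), and together with $\mu_0=\tan\theta(0)\in\R$ this gives $(h,\mu_0)\in\calH$. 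In the other direction, given $(h,\mu_0)\in\calH$, set $G=h+i\wt h$ and $P=G-i\mu_0/\pi$; since $\Re P=h>0$, $P$ maps $D_*$ into the open right half-plane, so $P$ is nonvanishing, $\log P$ has a principal branch, and $\theta:=-\arg P=-\Im\log P$ is harmonic with $|\theta|<\pi/2$, i.e.\ $\theta\in\calB$, which is \eqref{aug27.6}. To see these are mutually inverse: the harmonic conjugate of $\Re(e^{-iF})$ that vanishes at $0$ is $\sin\theta(0)-e^{\wt\theta}\sin\theta$, so for $h$ coming from \eqref{aug27.5} one computes $P=h+i\wt h-i\mu_0/\pi=(\pi\cos\theta(0))^{-1}e^{\wt\theta-i\theta}$, whose argument is $-\theta$, returning \eqref{aug27.6}; and starting from $(h,\mu_0)$, since $i\log P=\theta+i\log|P|$ is analytic one has $\wt\theta=\log|P|-\log|P(0)|$, hence $e^{\wt\theta}\cos\theta=\Re P/|P(0)|=h/|P(0)|$ and $\pi\cos\theta(0)=\pi h(0)/|P(0)|=1/|P(0)|$, so \eqref{aug27.5} returns $h$, while $\tan\theta(0)=-\Im P(0)/\Re P(0)=\mu_0$.

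For $\calH\leftrightarrow\calR$ everything reduces to the observation that, since $i(u+i\wt u)$ is analytic, the harmonic conjugate of $\wt u$ that vanishes at $0$ equals $u(0)-u$. Hence, for $(h,\mu_0)\in\calH$, the function $\mu=\mu_0-\pi\wt h$ of \eqref{aug27.3} is harmonic with conjugate $\wt\mu=\pi(h-h(0))=\pi h-1>-1$, so $\mu\in\calR$; conversely, for $\mu\in\calR$, the function $h=(\wt\mu+1)/\pi$ of \eqref{aug27.4} is harmonic and strictly positive, with $h(0)=1/\pi$ so $\|h\|_1=1$, and $\mu_0=\mu(0)\in\R$, giving $(h,\mu_0)\in\calH$; the same identity shows \eqref{aug27.3} and \eqref{aug27.4} invert one another. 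Finally, \eqref{aug27.3} gives $\wt h-\mu_0/\pi=-\mu/\pi$, so the analytic function $P$ above is $P=h-i\mu/\pi$: combined with \eqref{aug27.6} this is \eqref{sept13.1}, and $\tan\theta=-\Im P/\Re P=\mu/(\pi h)$ gives $\mu=\pi h\tan\theta$, the first equality in \eqref{aug27.7}. For the limit in \eqref{aug27.7}, I would fix $z\in D_*$ and $r\in(|z|,1)$; since $\mu$ is harmonic in $D_*$, the Poisson formula on $\ball(0,r)$ gives $\mu(z)=\tfrac1{2\pi}\int_0^{2\pi}\Re\bigl(\tfrac{re^{it}+z}{re^{it}-z}\bigr)\mu(re^{it})\,dt$, and substituting $\mu=\pi h\tan\theta$ on $\{|x|=r\}$ and passing to the arc-length variable ($|dx|=r\,dt$) shows the right-hand side equals $\tfrac1{2r}\int_{|x|=r}\Re\bigl(\tfrac{x+z}{x-z}\bigr)h(x)\tan\theta(x)\,|dx|$, so $\tfrac12\int_{|x|=r}\Re\bigl(\tfrac{x+z}{x-z}\bigr)h(x)\tan\theta(x)\,|dx|=r\,\mu(z)\to\mu(z)$ as $r\uparrow1$, which is \eqref{aug27.7}. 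The only real obstacle is organizational: one must keep the several normalizations ($\wt f(0)=0$, $\pi h(0)=1$, $\mu_0=\tan\theta(0)=\mu(0)$, the factor $\pi$) consistent throughout and, at each step, spot the relevant analytic function---$\theta+i\wt\theta$ behind \eqref{aug27.5}--\eqref{aug27.6}, and $h-i\mu/\pi$ behind \eqref{aug27.7} and \eqref{sept13.1}.
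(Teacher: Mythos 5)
Your proof is correct and follows essentially the same route as the paper: the Poisson integral/Fatou argument for $\calT\leftrightarrow\calB$, the conjugate-of-the-conjugate identity $\wt{\wt h}=h(0)-h$ for $\calH\leftrightarrow\calR$, and the analytic functions $e^{-i(\theta+i\wt\theta)}$ and $h+i\wt h-i\mu_0/\pi$ (with the logarithm) for $\calB\leftrightarrow\calH$, which is exactly the content of the paper's Lemma \ref{ju20.1}. The derivation of \eqref{aug27.7} and \eqref{sept13.1}, including the Poisson formula on the circle of radius $r<1$, also matches the paper's argument.
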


\begin{proof}

The subject of analytic and harmonic functions on
the disk and their boundary values
has a long history. An eminently readable reference  for
background material on this subject
is  given in
the first three introductory chapters of \cite{Hoff}.

Non-tangential limits give the correspondence between $\calT$ and
$\calB$.
If $\theta\in \calB$, then $\theta$ has a non-tangential limit at
almost every $x\in \partial D_*$, which we will call $\theta(x)$.
The limit function $\theta(x) \in L^{\infty}(\partial D_*)$, and $\|\theta\|_{\infty}
\le \pi/2$.
Moreover, since $\frac{1}{2\pi}\Re \frac{x+z}{x-z}$
is the Poisson kernel on $\prt D_*$ for $z\in D_*$,
we have that
\begin{align}
\theta(z)+i \wt \theta(z) = \int_{\partial D_*} \frac{x+z}{x-z} \theta(x)
\frac{|dx|}{2\pi}.\label{ju21.12}
\end{align}
In fact if $\theta$ is any function in $L^{\infty}$ bounded by
$\pi/2$ then the right-hand
side \eqref{ju21.12}
defines an analytic function on $D_*$ whose real part is harmonic on
$D_*$, bounded by $\pi/2$ and has non-tangential limit function
$\theta(x)$, a.e.
Since $\int_{\prt D_*} \theta(x) \frac{|dx|}{2\pi}=\theta(0)$,
we have $\theta(x)\not\equiv \pi/2$ and $\theta(x)\not\equiv-\pi/2$
a.e. if and only if $|\theta(0)|<\pi/2$ and by the maximum principle,
this occurs if and only if $|\theta(z)|< \pi/2$ for all $z\in D_*$.

If $(h,\mu_0)\in \calH$ then $\mu$ defined by \eqref{aug27.3}
is harmonic on $D_*$, with $\mu(0)=\mu_0$, and
$h(z)=(\wt \mu(z) +1)/\pi$, since $\pi h(0)=1$ and $\wt{\wt
h}=h(0)-h$. Since $h > 0$, we conclude that $\wt \mu > -1$ and $\mu \in \calR$.
If $\mu\in \calR$, and if $h$ is given by \eqref{aug27.4} then it is
easy to verify that $(h,\mu_0)\in \calH$.
This proves the one-to-one correspondence between
functions in $\calH$ and $\calR$.

\medskip
The proof for the correspondence between $\calB$ and $\calH$,
\eqref{aug27.7}-\eqref{sept13.1},
 as well as  useful formulae for the corresponding harmonic conjugates
 are presented in the next two lemmas.

\begin{lemma}\label{ju20.1}
There is a one-to-one correspondence between $\calB$ and $\calH$, $\theta
\leftrightarrow (h,\mu_0)$, given by
\begin{align}
\theta + i \wt \theta &= i \log(h+i \wt h -i\mu_0/\pi) -i
\log\left(\left(\sqrt{1+\mu_0^2}\right)/\pi\right) \ \ \text{  and   }\label{ju20.3} \\
\cr
h+i \wt h &= \frac{e^{-i(\theta + i \wt \theta)}}{\pi \cos \theta(0)}
+i \frac{\tan \theta(0)}{\pi}, \ \text{ and } \  \mu_0=\tan\theta(0).
\label{ju20.4}
\end{align}
\end{lemma}

\medskip
\begin{proof}
If $(h,\mu_0)\in \calH$ then the right-hand side of \eqref{ju20.3}
defines an analytic function  $S(h,\mu_0)(z)$  on $D_*$ with
\begin{align*}
\Re S(h,\mu_0)(z)=-\arg(h+i \wt h -i\mu_0/\pi) \in (-\pi/2,\pi/2)
\end{align*}
and $S(h,\mu_0)(0)=-\arg(1-i\mu_0)$, which is purely real. Thus
$S(h,\mu_0)=\theta+i \wt \theta$ for some $\theta\in \calB$. Likewise,
if $\theta\in \calB$ then the right-hand side of the first equation in
\eqref{ju20.4} defines an analytic function, $T(\theta)(z)$, on $D_*$ with
$\Re T(\theta)(z)=e^{\wt{\theta} (z)}\cos{\theta(z)}/(\pi
\cos{\theta(0)})
> 0$ and $\Re T(\theta)(0)=1/\pi$. Setting $\mu_0=\tan\theta(0)$ we
conclude that
if $h\equiv\Re T(\theta)$ then
$(h,\mu_0)\in \calH$. Moreover it is straightforward to verify
that, given $(h,\mu_0)\in \calH$, if $\theta$ is defined by \eqref{ju20.3}
then
\begin{align*}
h=\Re T(\theta) \quad \hbox{and} \quad \mu_0=\tan \theta(0).
\end{align*}
Alternatively, given $\theta\in \calB$, if $(h,\mu_0)$ is defined by
\eqref{ju20.4} then
\begin{align*}
\theta=\Re S(h,\mu_0).
\end{align*}
This proves the one-to-one correspondence in Lemma \ref{ju20.1}.
\end{proof}

\medskip

The equality in \eqref{sept13.1} of Theorem \ref{aug27.0} follows
immediately from \eqref{aug27.6}
and \eqref{aug27.3}.
The first equality in \eqref{aug27.7} of Theorem \ref{aug27.0} follows
by taking real and imaginary parts in \eqref{ju20.4}, then applying
\eqref{aug27.3}.
The second equality  in \eqref{aug27.7} follows from the Poisson
integral formula on the circle of radius $r < 1$
because $\mu$ is harmonic by \eqref{aug27.3}.

This completes the proof of Theorem \ref{aug27.0}.
\end{proof}

The next lemma
relates $\mu\in \calR$ to both $h$ and $\theta$ via a Mobius transformation. It will be used
in the proof of Theorem \ref{j15.7}.

\begin{lemma}\label{ju21.1}
Suppose $(h,\mu_0)\in \calH$, $\theta\in \calB$, and $\mu \in \calR$ with
$(h,\mu_0) \leftrightarrow \theta \leftrightarrow \mu$.
If $\phi$ is a one-to-one analytic map of $D_*$ onto $D_*$ then
\begin{align}
\theta\circ \phi \in \calB \leftrightarrow \biggl(\frac{h\circ\phi}{\|h\circ \phi\|_1},
\frac{\mu(\phi(0))}{\|h\circ \phi\|_1}\biggl)\in \calH. \label{ju21.2}
\end{align}
\end{lemma}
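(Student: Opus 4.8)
The plan is to reduce the statement to the already-established bijections $\calB \leftrightarrow \calH$ and $\calH \leftrightarrow \calR$ from Theorem \ref{aug27.0}, by tracking what happens to each of the three objects $\theta$, $(h,\mu_0)$, $\mu$ under precomposition with the conformal automorphism $\phi$. The left-hand side of \eqref{ju21.2} is easy: if $\theta\in\calB$ then $\theta\circ\phi$ is harmonic in $D_*$ (composition of a harmonic function with a holomorphic map) and still satisfies $|\theta\circ\phi|<\pi/2$ pointwise, so $\theta\circ\phi\in\calB$. The content is therefore to identify the pair $(h',\mu_0')\in\calH$ that corresponds to $\theta\circ\phi$ under the $\calB\leftrightarrow\calH$ correspondence, and to show it equals $\bigl(h\circ\phi/\|h\circ\phi\|_1,\ \mu(\phi(0))/\|h\circ\phi\|_1\bigr)$.

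First I would treat the harmonic-function coordinate. By \eqref{aug27.5}, $h$ is, up to the normalizing constant $\pi\cos\theta(0)$, the function $e^{\wt\theta}\cos\theta$, i.e. the real part of the analytic function $e^{-i(\theta+i\wt\theta)}$. Precomposing with $\phi$: since $\phi$ is holomorphic, the harmonic conjugate of $\theta\circ\phi$ is $\widetilde{\theta\circ\phi}=\wt\theta\circ\phi - \wt\theta(\phi(0))$ (the conjugate is determined only up to an additive constant, and our convention pins it down by vanishing at $0$). Hence the analytic function attached to $\theta\circ\phi$ is $e^{-i(\theta\circ\phi + i\wt\theta\circ\phi)}$ times a unimodular-modulus constant $e^{\wt\theta(\phi(0))}$ that is in fact real and positive, so $\Re$ of it is a positive constant multiple of $(e^{\wt\theta}\cos\theta)\circ\phi = $ (a constant multiple of) $h\circ\phi$. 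Normalizing to have $L^1$ norm $1$, as the definition of $\calH$ demands, forces the first coordinate to be exactly $h\circ\phi/\|h\circ\phi\|_1$. This is the routine but slightly fiddly computation; doing it cleanly at the level of the analytic functions $T(\theta)$ and $S(h,\mu_0)$ from Lemma \ref{ju20.1}, rather than with real and imaginary parts, keeps it short.

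Next the $\mu_0'$ coordinate. By the $\calH\leftrightarrow\calR$ correspondence, $\mu_0' = \mu'(0)$ where $\mu'$ is the element of $\calR$ attached to $(h',\mu_0')$; and by \eqref{aug27.7}, $\mu' = \pi h'\tan(\theta\circ\phi)$. Evaluating at $0$: $\mu'(0)=\pi h'(0)\tan\theta(\phi(0))$. Now $h'(0) = h(\phi(0))/\|h\circ\phi\|_1$, and $\pi h(\phi(0))\tan\theta(\phi(0)) = \mu(\phi(0))$ again by \eqref{aug27.7}. Therefore $\mu_0' = \mu(\phi(0))/\|h\circ\phi\|_1$, which is exactly the claimed second coordinate. (One can alternatively use \eqref{aug27.5}: $\mu_0=\tan\theta(0)$ for the base point, applied to $\theta\circ\phi$, gives $\mu_0' = \tan\theta(\phi(0))$ before normalization, and then one reconciles the normalization factor using $\pi h'(0)=1$ — it is worth checking both routes agree, since the normalization of $h$ interacts with $\mu_0$.) Finally, since both $\calB\leftrightarrow\calH$ and the explicit map $\theta\mapsto\bigl(h\circ\phi/\|h\circ\phi\|_1,\mu(\phi(0))/\|h\circ\phi\|_1\bigr)$ are well-defined maps and we have shown they agree on every $\theta\in\calB$, the correspondence \eqref{ju21.2} follows, and it is a bijection because composition with the automorphism $\phi^{-1}$ provides the inverse.

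The main obstacle I anticipate is purely bookkeeping: keeping straight the three different normalizations (the $\pi\cos\theta(0)$ in $h$, the $\|h\circ\phi\|_1$ rescaling, and the "conjugate vanishes at $0$" convention) so that the constants genuinely cancel to give the stated formula rather than something off by a factor of $\pi$ or $\|h\circ\phi\|_1$. I would organize the argument so that the identity $\|h\circ\phi\|_1 = \pi\,h(\phi(0))$ — a direct consequence of the mean-value-type formula $\|f\|_1=\pi f(0)$ quoted in the analytic preliminaries, but now applied to the *non-harmonic* function $h\circ\phi$, so it instead follows from the change-of-variables $\int_{D_*}h\circ\phi\,|\phi'|^2\,dz$ versus $\int_{D_*}h\,dz$ plus harmonicity of $h$ — is isolated as its own line, since it is exactly the fact that makes the two coordinates consistent.
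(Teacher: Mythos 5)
Your argument follows essentially the same route as the paper's proof of Lemma \ref{ju21.1}: you use the conjugation rule $\wt{f\circ\phi}=\wt f\circ\phi-\wt f(\phi(0))$ (the paper's \eqref{ju21.3}), compose the analytic identity \eqref{ju20.4} with $\phi$ to identify the first coordinate as $h\circ\phi/\|h\circ\phi\|_1$, and then match the second coordinate through $\mu=\pi h\tan\theta$ (formula \eqref{aug27.7}) together with the normalization identity $\|h\circ\phi\|_1=\pi h(\phi(0))$, which is exactly the paper's \eqref{ju21.4}. Up to that point the computation is correct and is, in substance, the published proof.

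The one genuine flaw is in your final paragraph, where you justify the crucial identity $\|h\circ\phi\|_1=\pi h(\phi(0))$. You assert that $h\circ\phi$ is \emph{not} harmonic and therefore the mean-value formula $\|f\|_1=\pi f(0)$ from the preliminaries does not apply; both the claim and the proposed substitute are wrong. Since $h$ is harmonic and $\phi$ is holomorphic, $h\circ\phi$ is harmonic (locally $h=\Re F$ with $F$ analytic, so $h\circ\phi=\Re(F\circ\phi)$), and the identity follows at once from the quoted formula applied to the positive harmonic function $h\circ\phi$; this is precisely how \eqref{ju21.4} is obtained in the paper. Your proposed change-of-variables argument does not repair anything: it shows $\int_{D_*}(h\circ\phi)(z)\,|\phi'(z)|^2\,dz=\int_{D_*}h(w)\,dw=\pi h(0)$, which is a different integral from $\|h\circ\phi\|_1=\int_{D_*}(h\circ\phi)(z)\,dz$ and yields no information about it. Since this identity is exactly what makes $\pi h'(0)=1$ (so that the normalized pair lies in $\calH$) and what reconciles your two expressions for $\mu_0'$, the justification as written fails, even though the correct fix is a single line. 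Note also that harmonicity of $h\circ\phi$ is tacitly used elsewhere in your argument (e.g.\ in treating $h\circ\phi/\|h\circ\phi\|_1$ as an admissible first coordinate and in forming its harmonic conjugate), so denying it in the last step is internally inconsistent as well.
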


\begin{proof}
First observe that if $f$ is harmonic then
$(f+i\wt f)\circ \phi -i \wt f(\phi(0))$ is analytic with imaginary
part vanishing at $0$, so that
\begin{align}
\wt{f\circ\phi} = \wt f \circ\phi -\wt f (\phi(0)).\label{ju21.3}
\end{align}
Evaluating the real part of  \eqref{ju20.4} at $z=\phi(0)$ we obtain
\begin{align}
\|h\circ \phi\|_1=\pi h(\phi(0))=\frac{e^{\wt \theta(\phi(0))}\cos \theta(\phi(0))}{
\cos \theta(0)}.\label{ju21.4}
\end{align}
Set $h_1={h\circ\phi}/{\|h\circ \phi\|_1}={h\circ\phi}/{\pi h(\phi(0))}$.
Then composing \eqref{ju20.4} with $\phi$ and using \eqref{ju21.3} and
\eqref{aug27.3},
\begin{align*}
h_1+i \wt h_1 &=
\frac{h\circ \phi+i\wt h\circ \phi -i \wt h (\phi(0))}{\| h\circ \phi\|_1} \\
&=
\frac{\exp ({-i(\theta+i\wt\theta)\circ\phi})}{\|h\circ \phi\|_1\pi
\cos{\theta(0)}} +\frac{i}{\pi}\biggl(\frac{\tan\theta(0)-\pi \wt h
(\phi(0))}{\|h\circ \phi\|_1}\biggl)\\
&=\frac{\exp ({-i(\theta\circ\phi +i \wt{\theta\circ\phi})})}{\pi \cos\theta(\phi(0))}+
\frac{i\mu(\phi(0))}{\pi \|h\circ\phi\|_1}.
\end{align*}
\end{proof}

\medskip
By \eqref{ju20.4} and \eqref{aug27.1} the correspondence between
$(h,\mu_0)\in \calH$, $\mu\in \calR$,  and $\theta\in \calT$
can also be written as
\begin{align}\label{ju21.21}
h(z)&=\Re\left(\frac{\exp\left(-i\int_{\partial
D_*}\frac{x+z}{x-z}\theta(x)\frac{|dx|}{2\pi}\right)}
{\pi \cos
\left(\int_{\partial D_*} \theta(x) \frac{|dx|}{2\pi}\right)}\right) \ \ \hbox{ and
} \ \ \mu_0=\tan\biggl(\int_{\partial D_*} \theta(x)
\frac{|dx|}{2\pi}\biggl),\\
\label{aug28.2}\mu(z)&=-\pi \Im \left(\frac{ \exp\left(-i\int_{\partial
D_*}\frac{x+z}{x-z}\theta(x)\frac{|dx|}{2\pi}\right)}{\pi \cos
\left(\int_{\partial D_*} \theta(x) \frac{|dx|}{2\pi}\right)}\right).
\end{align}

\medskip

We would like to have a similar formula for $\mu$ and $\theta$ in terms of $h$, but
the situation is a little more complicated for boundary values of positive harmonic
functions. A function $h$ is positive and harmonic on $D_*$ if and
only if
\begin{align}
h(z)=\int_{\partial D_*} \Re\biggl(\frac{x+z}{x-z}\biggl)
\sigma (dx) ,\label{ju21.13}
\end{align}
for some positive finite (regular Borel) measure $\sigma$ on $\partial D_*$.
The measures $h(rx)|dx|$ converge
weakly to $\sigma (dx)$ as $r\uparrow 1$. The function $h$ has
a non-tangential limit at almost every $x\in \partial D_*$, which we
will call $h(x)$, but $h(z)$ is not necessarily the Poisson integral
of $h(x)$. In fact $h\to +\infty$ radially $\sigma_s$-a.e.,  where $\sigma_s$ is
the singular component of the Radon-Nikodym decomposition of $\sigma$
with respect to the length measure $|dx|$ on $\partial D_*$.
It is true, however, that
a harmonic function $f$
has non-tangential limits $f(x)$ a.e.  and satisfies
\begin{align}
f(z)+i \wt f (z)=\int_{\partial D_*} \frac{x+z}{x-z} f(x) \frac{|dx|}{2\pi}
\label{aug28.5}
\end{align}
if and only if
\begin{align}
\lim_{r\uparrow 1} \int_{\partial D_*}
|f(rx)-f(x)|\, |dx|=0.\label{ju21.15}
\end{align}
Given a function $f$ defined on $\prt D_*$ which is integrable $|dx|$,
if we define $f(z)$ for $z\in D_*$ via \eqref{aug28.5} then $f$ satisfies
\eqref{ju21.15}. See \cite[pages 32 and 33]{Hoff}.

If  for some $p > 1$,
\begin{align}
\sup_{r<1} \int_{\prt D_*} |f(rx)|^p |dx| < \infty, \label{aug28.4}
\end{align}
or if
\begin{align*}
\sup_{r<1} \int_{\prt D_*} |(f+ i \wt{f})(rx)| |dx| < \infty
\end{align*}
then \eqref{ju21.15} holds.
See \cite[pages 33 and 51]{Hoff}.

\begin{example}\label{aug28.1}
A good example to keep in mind is
\begin{align}\label{jul12.1}
h(z)=\frac{1}{\pi}\Re\biggl(\frac{1+z}{1-z}\biggl).
\end{align}
Then $h(x)=0$ for $x\in \prt D_*\setminus\{1\}$. So $h$ cannot be the Poisson
integral of its boundary values. Nevertheless, if
$\theta\leftrightarrow (h,0)$ then since $\theta$ is bounded, it
satisfies \eqref{aug28.4} and
hence satisfies \eqref{ju21.15}.
In fact,
$\theta(x)= -\pi/2$ for $x\in \prt D_*$ with $\Im x >0$ and
$\theta(x)= \pi/2$ for $x\in \prt D_*$ with $\Im x <0$,
so that
\begin{align*}
\theta(z)+i \wt \theta (z) ={i}\log\frac{1+z}{1-z} =\int_{\prt D_*} \frac{x+z}{x-z}\theta(x)|dx|/(2\pi).
\end{align*}
\end{example}

If $h$ satisfies \eqref{ju21.15}, where
$(h,\mu_0)\in \calH \leftrightarrow \theta \in \calB$, then we can
recover $\theta$
directly from the boundary values of $h$ and $\mu_0$.
A similar result holds for $\mu$. The following corollary will be used later to
interpret $\mu(z)$ as a ``rotation rate'' about the point $z\in D_*$.

\medskip

\begin{corollary}\label{aug28.3}
Suppose $(h,\mu_0)\in \calH \leftrightarrow \theta(z) \in
\calB\leftrightarrow \theta(x)\in \calT\leftrightarrow \mu\in \calR$.
\begin{enumerate}[\rm (i)]
\item If $h$ satisfies \eqref{ju21.15} then for $z\in D_*$
\begin{align}
\theta(z)=-\arg\biggl(\int_{\partial D_*} \frac{x+z}{x-z}h(x)
\frac{|dx|}{2\pi}-i\mu_0/\pi\biggl).\label{ju21.16}
\end{align}
\item If $h(z)\tan\theta(z)$ or $\wt h(z)$ satisfy \eqref{ju21.15}, then
\begin{align}
\mu_0&=\mu(0)=\frac{1}{2}\int_{\prt D_*} h(x) \tan\theta(x) |dx|,
~~\text{and}
\label{ju21.22}\\
\mu(z)&=
\frac{1}{2}\int_{\prt D_*} \Re\biggl(\frac{x+z}{x-z}\biggl)
h(x)\tan\theta(x)|dx|\label{m11.1}\\
&=\frac{1}{2}\int_{\prt D_*}
h\Bigl(\frac{x+z}{1+\overline{z}x}\Bigl)\tan\theta\Bigl(\frac{x+z}{1+\overline{z}x}\Bigl)
|dx|.
\label{ju21.17}
\end{align}
\end{enumerate}
\end{corollary}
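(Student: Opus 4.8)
The plan is to derive the corollary entirely from Theorem~\ref{aug27.0}---in particular from the identities \eqref{aug27.6}, \eqref{aug27.3} and the first equality in \eqref{aug27.7}---together with the standard fact recorded around \eqref{aug28.5}--\eqref{ju21.15} that a harmonic function on $D_*$ admits the Poisson-type representation \eqref{aug28.5} if and only if it satisfies \eqref{ju21.15} (see \cite[pp.~32--33]{Hoff}). Part (i) I would settle in one line: by \eqref{aug27.6}, $\theta(z)=-\arg\bigl(h(z)+i\wt h(z)-i\mu_0/\pi\bigr)$, and if $h$ satisfies \eqref{ju21.15} then \eqref{aug28.5} with $f=h$ lets me replace $h(z)+i\wt h(z)$ by $\int_{\prt D_*}\frac{x+z}{x-z}h(x)\frac{|dx|}{2\pi}$, which is exactly \eqref{ju21.16}.

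For part (ii) the first step is to observe that the two stated hypotheses are equivalent, and both say that the harmonic function $\mu$ itself satisfies \eqref{ju21.15}: by the first equality in \eqref{aug27.7}, $\mu=\pi\,h\tan\theta$ on $D_*$, and by \eqref{aug27.3}, $\mu=\mu_0-\pi\,\wt h$, so each of $h\tan\theta$ and $\wt h$ differs from $\mu$ by only a constant factor and/or an additive constant, and condition \eqref{ju21.15} is insensitive to such changes. Granting that $\mu$ satisfies \eqref{ju21.15}, I would apply \eqref{aug28.5} to $f=\mu$ and use $\mu(x)=\pi\,h(x)\tan\theta(x)$ a.e.\ (the right-hand side being the a.e.-existing non-tangential limit of $z\mapsto h(z)\tan\theta(z)=\mu(z)/\pi$, which by \eqref{ju21.15} lies in $L^1(\prt D_*)$) to get
\begin{align*}
\mu(z)+i\,\wt\mu(z)
 = \int_{\prt D_*}\frac{x+z}{x-z}\,\mu(x)\,\frac{|dx|}{2\pi}
 = \frac12\int_{\prt D_*}\frac{x+z}{x-z}\,h(x)\tan\theta(x)\,|dx|.
\end{align*}
Taking real parts yields \eqref{m11.1}, and setting $z=0$ (so that $\Re\frac{x+z}{x-z}=1$) yields \eqref{ju21.22}. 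To pass from \eqref{m11.1} to \eqref{ju21.17} I would change variables via the Möbius automorphism $\psi_z(x)=\frac{x+z}{1+\overline z x}$ of $D_*$, which maps $0$ to $z$: since $|1-\overline z y|=|y-z|$ for $|y|=1$, the arc-length elements on $\prt D_*$ are related by $|dx|=\Re\bigl(\frac{y+z}{y-z}\bigr)\,|dy|$, so the Poisson-kernel integral in \eqref{m11.1} is precisely the normalized boundary integral in \eqref{ju21.17}. Here one uses that $\psi_z$ is a bi-Lipschitz self-map of $\prt D_*$, hence carries $|dx|$-null sets to null sets, so the a.e.\ boundary identities for $h$ and $\theta$ are preserved under the substitution.

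The one genuinely delicate point---and the reason the hypotheses in (ii) are stated in terms of $h\tan\theta$ or $\wt h$ rather than in terms of $h$---is exactly the first reduction above. A generic $(h,\mu_0)\in\calH$ need \emph{not} satisfy \eqref{ju21.15}: a positive harmonic function may carry a nonzero singular boundary measure (cf.\ Example~\ref{aug28.1} and the discussion around \eqref{ju21.13}), so $h$ need not equal the Poisson integral of its boundary values and one cannot argue with $h$ directly. Shifting the regularity assumption onto $\mu$ (equivalently $h\tan\theta$, equivalently $\wt h$) is what makes \eqref{aug28.5} available; once that is in hand the rest is just the Poisson representation plus a routine change of variables, and I would expect no further obstruction.
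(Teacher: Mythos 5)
Your proposal is correct and follows essentially the same route as the paper: part (i) is \eqref{aug27.6} (equivalently the real part of \eqref{ju20.3}) combined with the Poisson representation \eqref{aug28.5} valid under \eqref{ju21.15}, and part (ii) uses the identities $\mu=\mu_0-\pi\wt h=\pi h\tan\theta$ to see the hypotheses are equivalent, applies \eqref{aug28.5} to $\mu$ to get \eqref{ju21.22} and \eqref{m11.1}, and obtains \eqref{ju21.17} by the M\"obius change of variables. Your write-up simply spells out the details (including the Jacobian identity for the Poisson kernel) that the paper leaves implicit.
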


\begin{proof}
 $(i)$
 follows from the discussion above and \eqref{ju20.3}.

$(ii)$
Note that since $\mu=\mu_0-\pi \wt h (z)=\pi h(z)\tan\theta(z)$, for
$z\in D_*$, it follows that $h(z)\tan\theta(z)$ satisfies
\eqref{ju21.15} if and only if $\wt h (z)$ satisfies \eqref{ju21.15}.
Equations \eqref{ju21.22} and \eqref{m11.1} follow from \eqref{aug27.3}, \eqref{aug27.7},
and \eqref{aug28.5}.
Finally, equation \eqref{ju21.17} follows from
\eqref{m11.1} and a change of variables.
\end{proof}

\medskip

\begin{remark}\label{ju20.5}
\begin{enumerate}[\rm (i)]
\item
The maps $(h,\mu_0) \to \theta$ and $\theta \to (h,\mu_0)$ are continuous
under the topologies of uniform convergence on compact subsets of
$D_*$ and $(D_*,\R)$.
\item\label{ju21.20}
For functions in $\calB$, uniform convergence on compact subsets of
$D_*$ is equivalent to pointwise bounded convergence in $D_*$ and is
also equivalent to weak-* convergence (of the corresponding boundary value functions) in
$L^{\infty}(\partial D_*)$, as elements of the dual space of
$L^1(\partial D_*)$. But this convergence is not equivalent to
pointwise bounded a.e.\ convergence on $\partial D_*$.
For example, if
$\theta_k(z)=-\arg (1+ z^k/2)$, then $\theta_k \leftrightarrow
(h_k,0)$, with $h_k=\Re (1+z^k/2)$.  The functions $\theta_k$ converge to $0$,
uniformly on compact subsets
of $D_*$, pointwise boundedly on $D_*$, and  weak-* on $\partial D_*$. However, $\theta_k$
does not contain a subsequence converging pointwise on any subarc
in $\partial D_*$.
\item
The function $\theta$ is a constant function if and only if $h\equiv
1/\pi$ and $\mu_0=\tan \theta$. It is tempting to extend the definition
of $\calT$ to include $\theta\equiv \pi/2$ by saying $\theta\equiv
\pi/2$ corresponds to $h\equiv 1/\pi$ and $\mu_0 = +\infty$.  However,
we would lose the continuity of the correspondence. Indeed if
$(h,\mu_n), (g,\mu_n) \in \calH$ with $\mu_n\to +\infty$ and $g \ne h$,
let
$\theta_{2n} \leftrightarrow (h,\mu_{2n})$ and
$\theta_{2n+1} \leftrightarrow (g,\mu_{2n+1})$. Then $\theta_n$
converges to $\pi/2$ uniformly on compact subsets of $D_*$, but
the corresponding elements of $\calH$ do not converge.
\item If the pair $(h,\mu_0)$ corresponds to $\theta$ then $(h(\bar z),-\mu_0)$
corresponds to $-\theta(\bar z)$.  This follows from
Lemma \ref{ju20.1} since $f$ is analytic if and only if
$\overline{f(\bar z)}$ is analytic.  But $(h,-\mu_0)$ does not
correspond to $-\theta$, unless $h\equiv 1/\pi$. Indeed, if $(h,-\mu_0)$
does correspond to $-\theta$ then
\begin{align*}
-(\theta+i \wt \theta)=i\log(h+i\wt h
-i(-\mu_0)/\pi)-i\log\sqrt{1+\mu_0^2}/\pi.
\end{align*}
Adding this equation to \eqref{ju20.3} we obtain
\begin{align*}
0=i\log((h+i\wt h)^2+\mu_0^2/\pi^2)-2i\log\sqrt{1+\mu_0^2}/\pi,
\end{align*}
and thus $h+i\wt h$ is constant. Since $(h,\mu_0)\in \calH$, we have
$h\equiv h(0)=1/\pi$.

\item Equation \eqref{ju21.16} fails for the example $\theta \leftrightarrow(h,0)\in\calH$
where $h$ is given by \eqref{jul12.1}.
\end{enumerate}
\end{remark}

 \medskip

\begin{example}
Let $F=\phi+i \wt \phi= \sqrt{\log(1-z^2)}.$ We claim we can choose the branch of the square
root so that $F$ is analytic on $D_*$, with $\phi$ continuous on $\ol D_*$ and
$\wt \phi$ not bounded above or below. By Theorem \ref{aug27.0} and the definition of $\calR$ there is no
$(h,\mu_0)\in \calH$ so that
$\phi=\mu$, where $\mu \leftrightarrow (h,\mu_0)$. In fact
there do not exist any $a, b \in \R$, $b\ne 0$, and $(h,\mu_0)\in \calH$
such that $a+b\phi=\mu$.
 To see the claim, we set $g(z)=(\log(1-z))/z$. Then
$g$ is analytic on a simply connected neighborhood  of $\ol D_* \setminus\{1\}$
and non-vanishing, and hence has an analytic square root
$k$. Then $F(z)\equiv z k(z^2)$ is analytic on a neighborhood of $\ol
D_*\setminus\{\pm 1\}$ and satisfies $F(z)^2=\log(1-z^2).$
Thus $\phi$ and $\wt \phi$ are continuous and smooth on $\ol D_*\setminus\{\pm 1\}$.
Since $\phi^2-\wt \phi^2 =\log|1-z^2| \to -\infty$ as $z\to \pm 1$,
we conclude $\wt \phi^2 \to \infty$ as $z \to
\pm 1$. But $2\phi \wt \phi = \arg(1-z^2)$ is bounded, so we must have $\phi \to 0$
as $z\to \pm 1$. Thus $\phi$ is continuous on $\ol D_*$, and $\wt \phi$ is unbounded.
Since $F$ is odd, $\wt \phi$ is neither bounded above nor below.
\end{example}

\medskip

\begin{example}
Consider the harmonic function $\phi(z)=\Re z$ in $D_*$ with
boundary values $\phi(e^{it}) = \cos t$, $0 \leq t < 2\pi$.
If $a,b\in \R$, with $b\ne 0$, set $\mu=a+b\phi=a+b\Re z$. Then
$\wt \mu=b\Im z > -1$ for all $z\in D_*$ if and only if $|b| \le 1$.
By the equivalence of $\calR$ and $\calH$ given in Theorem \ref{aug27.0},
 $\mu=a+b\phi$ corresponds to some $(h,\mu_0)\in \calH$ if and only if
$|b| \le 1$.
\end{example}

If $\phi$ is harmonic on $D_*$ and if $\wt \phi$ is bounded, then
for $a,b\in \R$ with $b\ne 0$, the function $\mu=a+b\phi$ has harmonic
conjugate $b\wt \phi$. So for sufficiently small $b$, we have ~$\wt\mu>-1$
which implies
$\mu \in \calR$ and $a+b\phi \leftrightarrow
(h,\mu_0)\in \calH$ for some $(h,\mu_0)$.
Since $\wt\mu(0)=0$,  we have that $\inf \wt \phi < 0 < \sup \wt \phi$ so that
for $|b|$ sufficiently large
$\mu=a+b\phi$ fails to be in
$\calR$. So in some sense,
membership in $\calR$ depends on the ``oscillation'' of the harmonic
function on $D_*$, but not its mean.
The next proposition gives a more precise version. Its proof is elementary, but it
will be useful for understanding our (later) description of rotation rates and
stationary distributions for ORBMs.

\begin{proposition} \label{ju21.9}
Suppose $\phi$ is (real-valued and) harmonic in $D_*$. Set
\begin{align*}
K_-=\inf_{z\in D_*} \wt \phi(z)
\ \ \  \text{  and  } \ \ \ K_+= \sup_{z\in D_*} \wt \phi(z)
\end{align*}
If $a, b \in \R$ with
$-1/|K_+| \le b \le 1/|K_-|$, then there is a
unique $(h,\mu_0)\in \calH$ such that
\begin{align}
a + b \phi(z)= \mu(z), \label{ju21.10}
\end{align}
where $\mu$ and $(h,\mu_0)$ are related as in Theorem \ref{aug27.0}.
Conversely, if $b < -1/|K_+|$ or $b>1/|K_-|$
then there do not exist any $a\in \R$ and $(h,\mu_0)\in
\calH$ such that \eqref{ju21.10} holds.
\end{proposition}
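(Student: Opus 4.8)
The plan is to translate the requirement $(h,\mu_0)\in\calH$ into a single inequality for a harmonic conjugate, using the correspondence $\calH\leftrightarrow\calR$ of Theorem \ref{aug27.0}, and then to extract the admissible range of $b$ from the strong maximum principle applied to $\wt\phi$.

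First I would reduce to a question about $\calR$. By the one-to-one correspondence $\calH\leftrightarrow\calR$ in Theorem \ref{aug27.0}, equation \eqref{ju21.10} admits a solution $(h,\mu_0)\in\calH$ if and only if the harmonic function $\mu:=a+b\phi$ lies in $\calR$, and in that case $(h,\mu_0)$ is uniquely determined by $h=(\wt\mu+1)/\pi$ and $\mu_0=\mu(0)$. Since harmonic conjugation normalized to vanish at $0$ is $\R$-linear and the conjugate of a constant is identically $0$, the conjugate of $\mu=a+b\phi$ is $\wt\mu=b\wt\phi$; hence, by the definition of $\calR$, the pair exists if and only if $b\,\wt\phi(z)>-1$ for every $z\in D_*$. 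This condition does not involve $a$, which is precisely why the converse statement fixes only $b$.

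Next I would dispose of the degenerate case. If $\phi$ is constant then $\wt\phi\equiv 0$, so $K_-=K_+=0$ and, under the conventions $1/0=+\infty$ implicit in the statement, every $b\in\R$ satisfies the hypothesis while $b\,\wt\phi\equiv 0>-1$ holds automatically and the converse is vacuous. So I may assume $\phi$, and hence $\wt\phi$, is non-constant. Because $\wt\phi(0)=0$, the strong minimum and maximum principles give $K_-<\wt\phi(z)<K_+$ for all $z\in D_*$; in particular $K_-<0<K_+$, so $|K_-|=-K_-$ and $|K_+|=K_+$ (either value possibly $+\infty$). Then I would read off the admissible $b$: for $b=0$ the inequality is trivial and $0$ lies in the claimed interval; for $b>0$, $\inf_{z\in D_*}b\,\wt\phi(z)=bK_-$ and the strict bound $\wt\phi(z)>K_-$ gives $b\,\wt\phi(z)>bK_-$ for all $z$, so $b\,\wt\phi>-1$ on $D_*$ exactly when $bK_-\ge -1$, i.e. $b\le 1/|K_-|$ (if $bK_-<-1$, picking $z$ with $\wt\phi(z)$ close enough to $K_-$ violates the bound); symmetrically, for $b<0$, $\inf_{z\in D_*}b\,\wt\phi(z)=bK_+$ and $b\,\wt\phi(z)>bK_+$ for all $z$, so $b\,\wt\phi>-1$ on $D_*$ exactly when $bK_+\ge -1$, i.e. $b\ge -1/|K_+|$. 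Combining the three cases, $a+b\phi\in\calR$ if and only if $-1/|K_+|\le b\le 1/|K_-|$, which together with the first step yields both the existence-uniqueness claim and its converse.

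The step I expect to require the most care, rather than being a genuine obstacle, is the use of the \emph{strong} form of the maximum/minimum principle: it is exactly the strictness $K_-<\wt\phi<K_+$ on the open disk that forces the endpoints $b=1/|K_-|$ and $b=-1/|K_+|$ to be \emph{included} in the admissible range; combined with correctly tracking the sign reversal when $b<0$ and with the conventions $1/0=+\infty$, $1/\infty=0$, this closes the argument.
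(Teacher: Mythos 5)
Your proposal is correct and follows essentially the same route as the paper: reduce via the correspondence $\calH\leftrightarrow\calR$ of Theorem \ref{aug27.0} to the condition $b\,\wt\phi>-1$ on $D_*$, and use the maximum principle to handle the endpoint cases. The only cosmetic difference is that you apply the strong maximum/minimum principle to $\wt\phi$ (getting the strict bounds $K_-<\wt\phi<K_+$), whereas the paper applies it to $\wt\mu=b\,\wt\phi$ using $\wt\mu\ge -1$ and $\wt\mu(0)=0$; both yield the same conclusion.
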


In the statement of Proposition \ref{ju21.9} we allow the possibility that $K_+$
is infinite, in which case we interpret $1/|K_+|$ as equal to zero. A similar
statement holds for $|K_-|$.

\begin{proof}
Note that $K_-\le 0 \le K_+$ since $\wt \phi(0)=0$. If $b\in \R$ and
if
$-1/|K_+| \le b \le 1/|K_-|$,
set
$\mu=a+b \phi$. Then $\wt \mu(z)=b \wt \phi(z) \ge -1$. Since $\wt \mu(0)=0$, the
maximum principle implies that $\wt \mu(z) > -1$ for all $z\in D_*$,
so that $\mu\in \calR$.
The corresponding $(h,\mu_0)\in \calH$ is given by \eqref{aug27.4} of Theorem
\ref{aug27.0}.

Conversely if $(h,\mu_0)\in \calH$ corresponds to $\mu=a+b\phi\in
\calR$ as in Theorem \ref{aug27.0}, then $\wt \mu(z)=b \wt \phi(z) > -1$. But this
implies $b \ge -1/\sup \wt \phi(z)$ and $ b \le 1/|\inf \wt \phi(z)|$.
\end{proof}

\medskip

If a real-valued function is slightly better than continuous, then its harmonic
conjugate is continuous and hence bounded.
For a function $f: \partial D_* \to \R$, we define the modulus of continuity of $f$ by
$\omega_f(a) = \sup_{|s-t|< a} |f(e^{is}) - f(e^{it})|$. We say that $f$ is Dini continuous
if $\int_0^b (\omega_f(a)/a) da < \infty$ for some $b>0$. If $f$ is Dini
continuous then $\wt f$ is continuous and therefore bounded. See
\cite[Thm III.1.3]{Gar}.

\begin{theorem} \label{j27.4}
Suppose that $\theta\in \calT$, $(h,\mu_0)\in \calH$, and $\mu\in \calR$
correspond to each other as in Theorem \ref{aug27.0}. See also
\eqref{ju21.21} and \eqref{aug28.2}.
\begin{enumerate}[\rm (i)]
\item
If $\theta$ is Dini continuous on $\prt D_*$, then $h$
and $\mu$
extend to be continuous on $\ol D_*$.
If $\mu$ is Dini continuous on $\prt D_*$, then $h$ is continuous on $\ol D_*$
and $\theta$ is continuous on $\ol D_*\setminus Z$, where $Z=\{x\in \prt
D_*: h(x)=\mu(x)=0\}.$ Similarly, if $h$ is Dini continuous on $\prt D_*$,
then $\mu$ is continuous on $\ol D_*$, and $\theta$ is continuous on $\ol
D_*\setminus Z$. In each of these cases, $h$ and $\wt h$ satisfy
\eqref{ju21.15}, so that the conclusions of Corollary \ref{aug28.3} hold.
\item
Suppose that $\omega$ is an increasing continuous concave function on
$[0,\pi/2]$ such that $\omega(0)=0$, $\omega(\pi/2)=\pi/4$, and  $\int_0^{\pi/2} \frac{\omega(a)}{a} da = \infty$.
Then there exists $\theta\in \calT$ such that its modulus of continuity $\omega_\theta(a)= \omega(a)$ for $a \in [0,  \pi/2]$ and
both $h$ and $\mu$ are unbounded.
\end{enumerate}
\end{theorem}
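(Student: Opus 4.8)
The plan for part~(i) is to push Dini continuity through the explicit correspondences of Theorem~\ref{aug27.0}, using the cited fact (\cite[Thm.~III.1.3]{Gar}) that if a boundary function $f$ is Dini continuous then its conjugate $\wt f$ extends continuously to $\ol D_*$, hence is bounded. If $\theta$ is Dini continuous, then $\theta+i\wt\theta$ is continuous on $\ol D_*$, so by \eqref{ju20.4} the analytic function $h+i\wt h=e^{-i(\theta+i\wt\theta)}/(\pi\cos\theta(0))+i\tan\theta(0)/\pi$ is continuous on $\ol D_*$; thus $h,\wt h$ are continuous on $\ol D_*$ and $\mu=\mu_0-\pi\wt h$ is too. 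If $\mu$ is Dini continuous, then $\wt\mu$ is continuous on $\ol D_*$, so $h=(\wt\mu+1)/\pi$ is continuous on $\ol D_*$ by \eqref{aug27.4}, and $\wt h=\wt{\wt\mu}/\pi=(\mu_0-\mu)/\pi$ is too. If $h$ is Dini continuous, then $\wt h$ is continuous on $\ol D_*$, so $\mu=\mu_0-\pi\wt h$ is, and $h$ is continuous. In every case $h$ and $\wt h$ are continuous on the compact set $\ol D_*$, hence $h(rx)\to h(x)$ and $\wt h(rx)\to\wt h(x)$ uniformly on $\prt D_*$, which gives \eqref{ju21.15} and thus the applicability of Corollary~\ref{aug28.3}. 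In the last two cases the continuity of $\theta$ off $Z$ follows from \eqref{sept13.1}: $\theta(z)=-\arg(h(z)-i\mu(z)/\pi)$, and since $h>0$ on $D_*$ and $h\ge 0$ on $\prt D_*$, the continuous map $z\mapsto h(z)-i\mu(z)/\pi$ sends $\ol D_*$ into the closed right half-plane and vanishes on $\prt D_*$ exactly on $Z=\{h=\mu=0\}$, where alone the principal branch of $\arg$ fails to be continuous; the extended boundary values agree a.e.\ with the nontangential limits defining $\theta\in\calT$.

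For part~(ii) I would first reduce to a construction. Suppose one can produce a continuous $\theta\in\calT$ with $\omega_\theta=\omega$ on $[0,\pi/2]$ together with a point $x_0\in\prt D_*$ at which $\theta(x_0)\in(0,\pi/2)$ and $\wt\theta(z)\to+\infty$ as $D_*\ni z\to x_0$ along some sequence. Since $\theta$ is then continuous on $\ol D_*$, $\cos\theta(z)\to\cos\theta(x_0)>0$ and $\sin\theta(z)\to\sin\theta(x_0)>0$; by \eqref{ju20.4}, $h(z)=\cos\theta(z)\,e^{\wt\theta(z)}/(\pi\cos\theta(0))$, and by \eqref{aug27.7}, $\mu(z)=\pi h(z)\tan\theta(z)=\sin\theta(z)\,e^{\wt\theta(z)}/\cos\theta(0)$, so both $h(z)\to+\infty$ and $\mu(z)\to+\infty$ along that sequence; hence $h$ and $\mu$ are unbounded, as required. (Here $\theta(0)$ is the fixed value at the centre, which lies in $(-\pi/2,\pi/2)$.)

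To build such a $\theta$, fix $x_0=1$, $x_1=-1$, extend $\omega$ to $[0,\pi]$ by setting it equal to $\pi/4$ on $[\pi/2,\pi]$ (it stays continuous, nondecreasing, concave), and take the shape function $\phi_0(e^{it})=\omega(\dist(e^{it},x_1))$. Subadditivity of $\omega$ and the triangle inequality give $|\phi_0(e^{is})-\phi_0(e^{it})|\le\omega(\dist(e^{is},e^{it}))$, with equality approached by putting one point at $x_1$, so $\omega_{\phi_0}=\omega$ on $[0,\pi]$; moreover $\phi_0\equiv\pi/4$ on the half-circle $\{\dist(\cdot,x_0)\le\pi/2\}$, and $\wt\phi_0$ is bounded near $x_0$ because the only non-Dini roughness of $\phi_0$ sits near $x_1$ (make this precise with a partition $\phi_0=\phi_0\eta+\phi_0(1-\eta)$, $\eta\equiv1$ near $x_1$, $\eta\equiv0$ near $x_0$: $\phi_0(1-\eta)$ is Lipschitz, hence Dini, and $\phi_0\eta$ is supported away from $x_0$, so its conjugate is bounded there). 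Now add a one-sided dyadic perturbation near $x_0$: fix small $\delta_0>0$, let $\varphi_j$ be continuous bumps with $0\le\varphi_j\le 1$, $\varphi_j=1$ on $I_j=\{t:\delta_02^{-j}\le t\le\delta_02^{-j+1}\}$ and supported in a slightly larger arc, and set $\psi=\sum_{j\ge1}\epsilon_j\varphi_j$ with $\epsilon_j=c\,\omega(\delta_02^{-j})$, $c>0$ small. Concavity gives $\tfrac12\omega(\delta_02^{-j})\le\omega(s)\le\omega(\delta_02^{-j})$ on $[\delta_02^{-j-1},\delta_02^{-j}]$, so $\sum_j\epsilon_j\asymp c\int_0^{\delta_0}\omega(s)\,s^{-1}\,ds=\infty$; a routine estimate gives $\omega_\psi\le C_0c\,\omega$ on $[0,\pi/2]$ and $\|\psi\|_\infty\le 2c\,\omega(\delta_0)$, so choosing $\delta_0$ and then $c$ small makes $\omega_\psi\le\omega$ and $\|\psi\|_\infty$ as small as desired; and since the conjugate of a smoothed indicator of $I_j$ equals $\tfrac1\pi\log2+o(1)$ at interior points $z$ with $|z-x_0|\ll\delta_02^{-j}$, one gets $\wt\psi\big((1-\delta_02^{-n})x_0\big)=\tfrac{\log2}{\pi}\sum_{j<n}\epsilon_j+O(1)\to+\infty$. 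Finally set $\theta=\phi_0+\psi$: since $\supp\psi\subset\{\dist(\cdot,x_0)<2\delta_0\}$, where $\phi_0\equiv\pi/4$, checking pairs of points case by case (both outside $\supp\psi$; both in the region $\phi_0\equiv\pi/4$; or ``mixed,'' which forces $\dist(e^{is},e^{it})$ within $2\delta_0$ of $\pi/2$, where the difference is at most $[\pi/4-\omega(\pi/2-2\delta_0)]+\|\psi\|_\infty$) gives $\omega_\theta\le\omega$ on $[0,\pi/2]$, while $\omega_\theta\ge\omega_{\phi_0}=\omega$ from points near $x_1$ where $\psi=0$; hence $\omega_\theta=\omega$. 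Also $\theta$ has values in $[0,\pi/4+\|\psi\|_\infty]\subset(-\pi/2,\pi/2)$ so $\theta\in\calT$, $\theta(x_0)=\phi_0(x_0)+\psi(x_0)=\pi/4\in(0,\pi/2)$, and $\wt\theta=\wt\phi_0+\wt\psi\to+\infty$ along $(1-\delta_02^{-n})x_0$ because $\wt\phi_0$ is bounded near $x_0$; so $\theta$ meets the requirements of the reduction.

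The routine part is~(i); the substance is in~(ii), and the genuine obstacle there is that $\omega_\theta$ must equal $\omega$ \emph{exactly} on $[0,\pi/2]$, not merely up to constants. That is what forces the two-layer design---a shape function pinning the modulus from both sides yet flat near a point $x_0$, plus a tiny one-sided dyadic perturbation near $x_0$ that is invisible to the modulus but, since $\sum_j\omega(\delta_02^{-j})\asymp\int_0^{\delta_0}\omega(s)s^{-1}\,ds=\infty$, drives the harmonic conjugate to $+\infty$. The two delicate points are keeping $\omega_\psi\le\omega$ (arranged by shrinking $c$ after fixing $\delta_0$ to control the ``mixed'' pairs near scale $\pi/2$) and the boundedness of $\wt\phi_0$ near $x_0$ (handled by the partition of unity, exploiting that $\phi_0$'s non-Dini roughness lives at the antipode of $x_0$).
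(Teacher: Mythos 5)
Your part (i) is correct and is essentially the paper's argument: Dini continuity of the given boundary function makes its conjugate continuous on $\ol D_*$ by \cite[Thm. III.1.3]{Gar}, and the identities \eqref{ju20.4}, \eqref{aug27.3}, \eqref{aug27.4} and \eqref{sept13.1} transport this to $h$, $\mu$, and to $\theta$ off $Z$, giving \eqref{ju21.15} since $h$ and $\wt h$ are then continuous on $\ol D_*$.

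In part (ii), however, the decisive analytic claim has the wrong sign. With the conjugation used throughout the paper (the one fixed by \eqref{ju21.12} and \eqref{ju20.4}, i.e. $\theta+i\wt\theta$ analytic with $\wt\theta(0)=0$), the conjugate of the indicator of an arc $\{e^{it}:a\le t\le b\}$, $0<a<b<\pi$, is $\tfrac1\pi\log\bigl(|e^{ia}-z|/|e^{ib}-z|\bigr)$; Example \ref{aug28.1} is exactly this for $(a,b)=(0,\pi)$, where the conjugate $-\tfrac1\pi\log|(1+z)/(1-z)|$ tends to $-\infty$ as $z\to 1$. Your evaluation points $z=(1-\delta_02^{-n})x_0$ sit at the clockwise (initial) end of every bump $I_j=[\delta_02^{-j},\delta_02^{-j+1}]$, so for $|z-x_0|\ll\delta_02^{-j}$ the conjugate of the smoothed indicator of $I_j$ is $-\tfrac{\log 2}{\pi}+o(1)$, not $+\tfrac{\log 2}{\pi}+o(1)$. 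Hence $\wt\psi\bigl((1-\delta_02^{-n})x_0\bigr)=-\tfrac{\log2}{\pi}\sum_{j<n}\eps_j+O(1)\to-\infty$ (and the same holds for any approach to $x_0$), so $\wt\theta\to-\infty$ there and, by \eqref{aug27.5}, $h\to 0$ along your sequence instead of $\infty$; as written the construction establishes no unboundedness at all. Note that the paper's own example has the sign arranged the opposite way: it takes $\theta(e^{it})=-\alpha(t)$ with $\alpha\ge 0$ on $t\in[0,\pi]$, i.e. a negative non-Dini profile on the counterclockwise side of the blow-up point, precisely so that $\wt\theta(r)\to+\infty$.

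The error is local and repairable: put the bumps on the clockwise side of $x_0$ (support $\varphi_j$ in $\{-\delta_02^{-j+1}\le t\le-\delta_02^{-j}\}$) or replace $\psi$ by $-\psi$; all your modulus-of-continuity estimates are symmetric under either change, $\theta(x_0)=\pi/4$ is unaffected, and then $\wt\theta\to+\infty$ at $x_0$, so your reduction goes through. That reduction is in fact a nice refinement of the paper's example (which has $\theta(1)=0$ and only argues explicitly that $h$ is unbounded), since $\theta(x_0)=\pi/4\in(0,\pi/2)$ forces both $h$ and $\mu=\pi h\tan\theta$ to blow up. You should also fix the transition widths of the $\varphi_j$ to be comparable to $\delta_02^{-j}$, since the ``routine estimate'' $\omega_\psi\le C_0c\,\omega$ fails for arbitrarily steep bump edges.
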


\begin{proof}

(i) By \cite[Thm. III.1.3]{Gar}, if $\theta$ is Dini continuous then
the harmonic conjugate $\wt \theta$ is continuous on $\ol D_*$.
Hence, $F(z)=\exp(\wt \theta(z)-i\theta(z))$ is continuous and so is $h+i\wt
h$ by \eqref{ju20.4}. Hence $h$ and $\mu=\mu_0-\pi \wt h$ are continuous.
The remaining statements in (i) follow from \eqref{aug27.3},
\eqref{aug27.4}, and \eqref{ju20.3} and \cite[Cor. III.1.4]{Gar}.
In each of the cases in (i), $h$ and $\wt h$ are continuous on $\ol D_*$ and
hence satisfy \eqref{ju21.15}.

(ii) We give here an example based on \cite[page 101]{Gar}.
Suppose that $\omega$ is increasing and concave on $[0,\pi/2]$ with
$\omega(0)=0$, $\omega(\pi/2)=\pi/4$,
 and
\begin{align}
\int_0^{\pi/2}(\omega(t)/t) dt =
\infty.\label{j29.1}
\end{align}
Set
\begin{align*}
\alpha(t) =
\begin{cases}
\omega(t)  & \text{if  } 0 \leq t \leq \pi/2, \\
\omega(\pi-t)  & \text{if  } \pi/2 \leq t \leq \pi, \\
0 & \text{if  } -\pi < t < 0.
\end{cases}
\end{align*}
For $0\le x<y\le \pi$, write $x=ty$, ~$0 < t < 1$, and so
$y-x=(1-t)y$. 
Since $\omega$ is concave and $\alpha(0)=\omega(0)=0$,
$$
t\alpha(y)  \le \alpha(x)\qquad
\text{and}\qquad
(1-t)\alpha(y)  \le \alpha(y-x).
$$
Adding these inequalities we obtain $\alpha(y)-\alpha(x)\le \alpha(y-x)$.
Since $\alpha(\pi)=0$, replacing $\alpha(t)$ by $\alpha(\pi-t)$ in the
above argument, we also have that
$\alpha(x) -\alpha(y) \le \alpha(y-x)$.  If $ x < 0 < y < \pi$ with
$|x-y|< \pi/2$, then
$$
\alpha(y)-\alpha(x)=\alpha(y)\le \alpha(y+|x|)=\alpha(y-x).
$$
Set $\theta(e^{it})=-\alpha(t)$.
Then $\theta \in \calT$, because $|\alpha|\le \pi/4$, and $\omega_{\theta}(a)=\omega_{\alpha}(a)=\omega(a)$ for $0 \le a \le \pi/2$.

Let $b(r) = \cos^{-1}(\frac{1+r}{2})$.  Then for $r\in (0, 1)$,
\begin{align*}
\wt \theta(r)&= -\frac{1}{2\pi}\int_{0}^{\pi} \Im
\biggl(\frac{e^{it}+r}{e^{it}-r}\biggl)\alpha(t)dt\\
&\ge \frac{1}{2\pi}\int_{b(r)}^{\pi}\frac{2r\sin t}{|e^{it}-r|^2}
\alpha(t)
dt.
\end{align*}
Since $|e^{it}-1|\ge|e^{it}-r|$ when $\cos t \le (1+r)/2$, we have that 
$$
\wt \theta(r) \ge -\frac{r}{2\pi}\int_{b(r)}^{\pi} \Im
\biggl(\frac{e^{it}+1}{e^{it}-1}\biggl)\alpha(t)dt = \frac{r}{2\pi}\int_{b(r)}^{\pi}\frac{\alpha(t)}{\tan{t/2}} dt,
$$
which increases to $+\infty$ as $r\to 1$.
So $\wt \theta(r)$
is not bounded above. Because $\theta$ is continuous on $\partial D_*$
with $\theta(1)=0$, ~$\theta(z)$ extends to be continuous on
$\ol D_*$ and
$\cos\theta(r) \to 1$ as $r\to 1$, so by
\eqref{aug27.5} $h$ is also unbounded.
\end{proof}

Theorem \ref{j27.4} (ii) implies that if $\theta \in \calT$
is not Dini continuous on $\partial D_*$, then $h$ and $\mu$ may not be
extended continuously to $\ol D_*$.
The next proposition examines the situation when $\theta$ is as large as
possible on an interval of $\prt D_*$.

\begin{proposition}\label{ju21.18}
Suppose $I$ is an open arc in $\partial D_*$, and suppose $\theta\in \calT
\leftrightarrow (h,\mu_0)\in\calH$.
\begin{enumerate}[\rm (i)]
\item
If $\theta(x)=\pi/2$ a.e. on $I$, then
$f=h+i\wt h-i\mu_0/\pi$
extends to be analytic in a neighborhood of $D_*\cup I$ with $h=0$ on
$I$. The same conclusion holds if $\theta(x)=-\pi/2$ a.e. on $I$.
\item
If $h$ extends to be continuous on $D_* \cup I$ with $h=0$ on $I$,
then $f=h+i \wt h -i\mu_0/\pi$ extends to be analytic in a neighborhood
of $D_* \cup I$ with at most one zero $e^{it_0}\in I$. If $f\ne 0$ on
$I$ then $\theta \equiv \pi/2$ or $\theta \equiv -\pi/2$ on $I$. If
$f(e^{it_0})=0$ for some $e^{it_0}\in I$, then $\theta(e^{it})=-\pi/2$
for $e^{it}\in I$ with $t< t_0$ and $\theta(e^{it})=\pi/2$ for
$e^{it}\in I$ with $t> t_0$.
\end{enumerate}
\end{proposition}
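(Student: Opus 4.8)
The plan is to exploit the correspondence of Lemma \ref{ju20.1}, which says
$\theta + i\wt\theta = i\log f - i\log\bigl(\sqrt{1+\mu_0^2}/\pi\bigr)$ where
$f = h + i\wt h - i\mu_0/\pi$, so that on $\prt D_*$ we have
$h = |f|\cos\theta/(\text{const})$ and, more usefully, $\theta = -\arg f$ (up to
the additive constant $\arg(1-i\mu_0)$, which is zero since we may reduce to
$\mu_0$ fixed and track $\theta$ relative to its value at $0$). The point is that
the equation $\theta(x)=\pm\pi/2$ a.e.\ on $I$ is equivalent, via the boundary
version of $\theta = -\arg f$, to the statement that the nontangential boundary
values of $\Re f = h$ vanish a.e.\ on $I$.

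For part (i), suppose $\theta(x) = \pi/2$ a.e.\ on $I$. Then $\theta + i\wt\theta$
is the Herglotz integral of a bounded function, hence an analytic function on
$D_*$ with nontangential boundary values equal to $\pi/2$ a.e.\ on $I$; the
imaginary part $\wt\theta$ extends continuously (indeed harmonically) across $I$
because the boundary data $\theta$ is (a.e.) constant on $I$, so by the Schwarz
reflection principle applied to the bounded harmonic function $\theta - \pi/2$
(which has zero boundary values on $I$), $\theta$ extends analytically across $I$
with $\theta \equiv \pi/2$ on $I$, hence $\theta + i\wt\theta$ extends analytically
across $I$, and therefore so does $f = (\sqrt{1+\mu_0^2}/\pi)\,e^{-i(\theta+i\wt\theta)}$.
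On $I$ we have $\theta = \pi/2$, so $f = (\sqrt{1+\mu_0^2}/\pi)\,e^{\wt\theta}\,e^{-i\pi/2}$
is purely imaginary there, i.e.\ $h = \Re f = 0$ on $I$. The case
$\theta = -\pi/2$ a.e.\ on $I$ is identical (or follows from Remark \ref{ju20.5}(iv)).

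For part (ii), assume $h$ is continuous on $D_*\cup I$ with $h = 0$ on $I$. Then
$h$ is a bounded harmonic function near a boundary arc with continuous zero boundary
values, so Schwarz reflection gives an analytic extension of $h + i\wt h$ — hence of
$f = h + i\wt h - i\mu_0/\pi$ — to a neighborhood of $D_*\cup I$, with
$\Re f = h = 0$ on $I$, i.e.\ $f(I) \subset i\R$. Since $f$ is analytic and
non-constant (it omits the value $0$ of $h$ being replaced... rather: $f$ cannot be
constant because $h = \Re f$ is positive in $D_*$ and $0$ on $I$), the set
$\{f = 0\}\cap I$ is discrete; I claim it has at most one point. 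Indeed, $f$ maps
$I$ into the imaginary axis, and near any point of $I$ where $f \ne 0$ the argument
$-\arg f = \theta$ takes the value $\pm\pi/2$ according to the sign of $\Im f$; but
as $z$ moves along $I$, $\Im f$ is real-analytic and can change sign only at a zero
of $f$, and at such a zero $f$ behaves like $c(z - e^{it_0})^k$ for some integer
$k \ge 1$, so the reflected map forces $\Re f$ to change sign unless $k = 1$ — and
$\Re f = h \ge 0$ on the $D_*$ side near $I$ forces $k$ odd, hence $k = 1$, and a
single simple zero can occur at most once if $\Im f$ is to be consistent with
$h \ge 0$ on both sides... I will make this precise by noting that $h/(\text{dist to }I)$
extends continuously and is $\ge 0$, so its zero set on $I$ is where $f$ has a zero,
and a boundary Hopf-type argument gives simplicity. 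If $f \ne 0$ throughout $I$ then
$\Im f$ has constant sign on $I$, so $\theta = -\arg f \equiv \pm\pi/2$ on $I$. If
$f(e^{it_0}) = 0$ with $e^{it_0} \in I$, then $f(z) = c(z - e^{it_0}) + O((z-e^{it_0})^2)$
with $c \ne 0$; matching $\Re f = h \ge 0$ on the interior side pins down $\arg c$,
and then computing $\arg f$ just to each side of $e^{it_0}$ along $I$ gives
$\theta = -\pi/2$ for $t < t_0$ and $\theta = +\pi/2$ for $t > t_0$ (or the reverse,
but the sign is fixed by the orientation and by $h > 0$ inside).

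The main obstacle I anticipate is the bookkeeping in part (ii): getting the analytic
reflection legitimately (one must know $\wt h$ is well-behaved near $I$, which follows
since $h$ bounded harmonic with continuous boundary values on $I$ has $\wt h$ harmonic
across $I$ by reflection), and then pinning down the simplicity of the zero and the
exact left/right assignment of $\pm\pi/2$. The sign assignment is a local computation:
write $f(z) \approx c\,i^{?}(z - e^{it_0})$, impose $\Re f \ge 0$ for $z \in D_*$ near
$e^{it_0}$, and read off $\arg f$ on the two subarcs; I expect this to come out as stated,
but it requires care with the orientation of $\prt D_*$ relative to the inward normal used
in defining $\wt\theta$ and $\theta = -\arg(\cdots)$ in Theorem \ref{aug27.0}.
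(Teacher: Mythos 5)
Your part (i) is essentially the paper's argument: the paper simply notes that after subtracting $\pi/2$ the Herglotz integral \eqref{ju21.12} reduces to an integral over $\prt D_*\setminus I$, which is manifestly analytic across $I$, and then reads off $h\to 0$ on $I$ from the real part of \eqref{ju20.4}; your reflection-principle phrasing for the harmonic function $\theta-\pi/2$ amounts to the same thing and is fine, since $\theta(z)\to\pi/2$ locally uniformly on $I$ by exactly the kernel estimate the paper records.

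In part (ii), however, there is a genuine gap at the decisive step. After the Schwarz reflection (which you do correctly), everything hinges on showing that $\Im f$ is \emph{strictly monotone} along $I$: this is what simultaneously gives at most one zero of $f$ on $I$, the constant sign of $\Im f$ (hence $\theta\equiv\pm\pi/2$) when $f\neq 0$ on $I$, and the specific ordering $\theta=-\pi/2$ for $t<t_0$ and $\theta=+\pi/2$ for $t>t_0$. You never establish this. Your uniqueness-of-zero argument trails off (``a single simple zero can occur at most once if $\Im f$ is to be consistent with $h\ge 0$ on both sides\dots'') without showing that two zeros lead to a contradiction, and you explicitly defer the left/right sign assignment (``I expect this to come out as stated, but it requires care with the orientation''), which is precisely the content of the claim: the orientation is what distinguishes $-\pi/2$ on $\{t<t_0\}$ from $+\pi/2$ on $\{t>t_0\}$. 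The paper closes this in one line via the polar Cauchy--Riemann identity: on $I$,
\begin{align*}
\frac{\partial}{\partial t}\,\Im f(e^{it})
=\frac{\partial}{\partial r}\,\Re f(re^{it})\Big|_{r=1}
=\frac{\partial h}{\partial r}\le 0,
\end{align*}
since $h=0$ on $I$ and $h>0$ in $D_*$, while $\Im f$ cannot be constant on any subarc of $I$ (otherwise $f$, having $\Re f=0$ there as well, would be constant, contradicting $h>0$ in $D_*$); hence $\Im f$ is strictly decreasing and $f$ maps $I$ one-to-one onto a subarc of the imaginary axis, after which all three conclusions follow from $\theta=-\arg f$ as in \eqref{ju21.16}. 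Your local expansion $f(z)\approx c(z-e^{it_0})$ with $\Re f\ge 0$ on the $D_*$ side can be pushed through to the same end (it forces $c=i\beta$ with a definite sign of $\beta$, so $\Im f$ crosses from $+$ to $-$ at every zero, and two such crossings with no intermediate zero are impossible), but as written the proposal asserts rather than proves both the ``at most one zero'' statement and the sign pattern.
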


\begin{proof}  (i)  Suppose $\theta(x)=\pi/2$ a.e. on $I$.
For $z\in D_*$ set $F(z)=\theta(z)-\pi/2 +i \wt \theta (z)$.
Then by \eqref{ju21.12}
\begin{align}
F(z)=\int_{\partial D_*} \frac{x+z}{x-z}
(\theta(x)-\pi/2)\frac{|dx|}{2\pi}=
\int_{\partial D_*\setminus I}
\frac{x+z}{x-z}(\theta(x)-\pi/2)\frac{|dx|}{2\pi}.\label{ju21.19}
\end{align}
The right-hand side of \eqref{ju21.19} defines an analytic function on
$\C \setminus (\partial D_*\setminus I)$. By \eqref{ju20.4}, $f\equiv h+i
\wt h -i \mu_0/\pi$ extends to be analytic in a neighborhood of $D_*\cup
I$. Also by \eqref{ju21.19}
\begin{align*}
\Re F(z)=\theta - \pi/2 =\int_{\partial D_*\setminus I}
\frac{1-|z|^2}{|x-z|^2}(\theta(x)-\pi/2)\frac{|dx|}{2\pi}.
\end{align*}
If $y\in I$, then $\frac{1-|z|^2}{|x-z|^2} \to 0$ uniformly in $x\in
\partial D_*\setminus I$ as $z\to y$. Thus $\Re F(z)=\theta(z)-\pi/2
\to 0$ as $z\to y\in I$.  Taking real part of \eqref{ju20.4},
\begin{align*}
h(z)=\frac{e^{\wt \theta(z)} \cos \theta(z)}{\pi \cos \theta(0)},
\end{align*}
so by the continuity of $\theta$ and $\wt \theta$ on $D_* \cup I$,
we have $h\to 0$ as $z\to  y\in I$.

To prove  (ii), suppose that $h$ extends to be continuous on
$D_*\cup I$  with $h=0$ on $I$. By the Schwarz reflection principle $f=h+i \wt h
-i\mu_0/\pi$ extends
analytically across $I$. By the Cauchy-Riemann equations,
$$\frac{\partial}{\partial t} \Im f(e^{it})= \frac{\partial}{\partial r}
\Re f(r e^{it}) \vert_{r=1} = \frac{\partial h}{\partial r} \le 0$$
on $I$ since $h=0$ on $I$ and $h>0$ on $D_*$. Since $\Re f = 0$ on $I$,
$\Im f$ cannot be constant on any subarc of $I$ and thus
$f$ is a one-to-one map
of the arc $I$ onto a subarc of the imaginary axis, and $(ii)$ follows
from \eqref{ju21.16}.
\end{proof}

\section{Main results}\label{sec:main}

This section contains only statements of the main results of this paper.
The proofs will be given in  Section \ref{sec:proofs}.  
First,  in Section \ref{subs-main1}, we establish  results when the
domain $D$ is smooth and the angle of reflection $\theta$ is
$C^2$  and non-tangential everywhere, that is, $\theta$ lies in a closed subinterval 
of $(-\pi/2,\pi/2)$.     Theorem \ref{j25.1} summarizes results on
existence and uniqueness of ORBMs, and Theorem \ref{f3.2} considers 
ORBMs on the disk $D_*$ and establishes
the probabilistic interpretation of the quantity $(h(z), \mu_0)$
 corresponding to $\theta \in \calT$, as specified in Theorem \ref{aug27.0}. 
ORBMs in $D_*$  with general reflection angles 
$\theta \in \calT$ are constructed in 
Section \ref{subs-main2}.  The  focus of Section \ref{subs-main3} (in
particular, see Theorem \ref{j18.1})  is
the case when the reflection vector field is
tangential at every point, which leads to a process referred to as 
excursion reflected Brownian motion (ERBM). 
  Lastly,  in Section \ref{subs-main4} (specifically, Theorems
\ref{j15.7}--\ref{j17.3} therein) we construct ORBMs in simply connected domains using conformal
mappings and then show, in the case of simply connected bounded
Jordan domains, that they can also be obtained as suitable limits of
ORBMs in $C^2$ domains. 

\subsection{Smooth $D$ and $C^2$-smooth non-tangential $\theta$}
\label{subs-main1}

We start with a theorem on existence and uniqueness of ORBM in the simplest case, when the domain is smooth and the angle of reflection is smooth and takes values in a closed  subinterval of $(-\pi/2, \pi/2)$. The result is essentially known.

\begin{theorem}\label{j25.1}
Assume that $D\subset \C$ is a bounded open set with $C^2$ boundary, and a function $\theta: \prt D \to (-\pi/2, \pi/2)$ is $ C^2$.
\begin{enumerate}[\rm (i)]
\item {\rm (\cite[Thm.~2.6]{HLS})}
The submartingale problem \eqref{f3.1} has a unique solution which defines a strong Markov process.

\item
The strong Markov process defined by the Skorokhod equation 
\eqref{j13.1}  is continuous and has the same distribution as the process defined by the submartingale problem \eqref{f3.1}.

\item {\rm (\cite{KKY}) } The ORBM obtained in (i) and (ii) can also
  be constructed by using the 
non-symmetric Dirichlet form approach.
\end{enumerate}
\end{theorem}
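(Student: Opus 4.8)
The plan is to note first that statements (i) and (iii) are quoted verbatim from \cite[Thm.~2.6]{HLS} and from \cite{KKY} respectively, so nothing has to be proved for them here; the only substantive assertion is (ii), the equivalence of the Skorokhod formulation and the submartingale formulation. Since $\prt D$ is $C^2$ and $\theta$ is $C^2$ with values in a compact subinterval of $(-\pi/2,\pi/2)$, for every $x_0\in\ol D$ the Skorokhod equation \eqref{j13.1} has a unique pathwise solution $(X,L)$ with $X_t\in\ol D$ for all $t$ and $L$ the boundary local time, by \cite[Cor.~5.2]{DupIsh} (corrected in \cite{DupIshCorr}); this is exactly what lets one speak of ``the strong Markov process defined by \eqref{j13.1}''. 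Continuity of $X$ is immediate, since $B$ is continuous, $L$ is continuous and nondecreasing, and hence $t\mapsto\int_0^t\bv_\theta(X_s)\,dL_s$ is continuous. Write $\P_{x_0}$ for the law of the solution started at $x_0$.

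The key step is to verify that the family $\{\P_{x_0}:x_0\in\ol D\}$ is a solution of the submartingale problem \eqref{f3.1}. The condition $\P_{x_0}(X_0=x_0)=1$ is built into \eqref{j13.1}. For the submartingale property, fix $f\in\calC\subset C^2(\ol D)$ and apply It\^o's formula along the semimartingale decomposition $X_t=x_0+B_t+\int_0^t\bv_\theta(X_s)\,dL_s$, using that the quadratic variation of $X$ comes only from $B$:
\begin{align*}
f(X_t)=f(x_0)+\int_0^t\nabla f(X_s)\cdot dB_s+\frac12\int_0^t\Delta f(X_s)\,ds+\int_0^t\nabla f(X_s)\cdot\bv_\theta(X_s)\,dL_s.
\end{align*}
The stochastic integral against $dB$ is a continuous martingale, since $\nabla f$ is bounded on $\ol D$. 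Because $\int_0^\infty\bone_D(X_t)\,dL_t=0$, the last integral accumulates only on $\{X_s\in\prt D\}$, and there, recalling $\bv_\theta=\n+\tan\theta\,\bt$, one has $\nabla f(X_s)\cdot\bv_\theta(X_s)=\frac{\prt}{\prt\n}f(X_s)+\tan\theta(X_s)\frac{\prt}{\prt\bt}f(X_s)\ge0$ by the very definition of $\calC$. Hence $t\mapsto\int_0^t\nabla f(X_s)\cdot\bv_\theta(X_s)\,dL_s$ is nondecreasing, so $f(X_t)-\frac12\int_0^t\Delta f(X_s)\,ds$ is a submartingale under $\P_{x_0}$ for every $x_0\in\ol D$. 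Thus $\{\P_{x_0}\}$ solves \eqref{f3.1}.

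To conclude, invoke the uniqueness in (i): the submartingale problem has at most one solution, so the family obtained from \eqref{j13.1} coincides with the one coming from \eqref{f3.1}, which proves (ii). I do not expect a genuine obstacle in this argument: the only points needing a line of care are the applicability of It\^o's formula to $f\in C^2(\ol D)$ along the pathwise Skorokhod decomposition, and the identity $\nabla f\cdot\bv_\theta=\frac{\prt}{\prt\n}f+\tan\theta\frac{\prt}{\prt\bt}f$ on $\prt D$. The real difficulty---pathwise solvability of \eqref{j13.1} and well-posedness of the submartingale problem \eqref{f3.1}---has been outsourced to \cite{DupIsh,DupIshCorr,HLS,KKY}.
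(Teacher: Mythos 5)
Your argument for (ii) is exactly the paper's: apply It\^o's formula to the pathwise solution of \eqref{j13.1}, observe that the $dL$ term is nondecreasing since $\nabla f\cdot\bv_\theta=\frac{\prt}{\prt\n}f+\tan\theta\,\frac{\prt}{\prt\bt}f\ge 0$ for $f\in\calC$ on the support of $dL$, and conclude via the uniqueness in (i) from \cite{HLS}. The only difference is that for (iii) the paper additionally sketches the non-symmetric Dirichlet form construction for the reader's convenience, while you simply cite \cite{KKY}, which is what the statement itself does, so your proposal is correct and essentially identical to the paper's proof.
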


It follows from the results in \cite{HLS} that if $\theta$ is $C^1$
then the ORBM
$X$ in the unit disc $D_*$ has a unique stationary distribution with the
density $h$ given by \eqref{ju21.21}. The stationary distribution was
characterized in \cite{HLS} in terms of a partial differential
equation in $D_*$ with appropriate boundary conditions.
In Theorem \ref{f3.2} (ii), we will show a partial converse, namely, that the stationary distribution characterizes an ORBM up to a real number that represents the ``rotation rate'' of $X$ about 0.

Under the assumptions of Theorem \ref{j25.1}, the ORBM
$X$ is continuous, a.s.. Consider a fixed $z\in D_*$.
Since $X_t \ne z$ for all $t>0$, a.s. (even if $X_0 =z$), we can uniquely define the function $t\to\arg(X_t-z)$ by choosing its continuous version and making an arbitrary convention that $\arg(X_1-z) \in [0, 2\pi)$.

Since $h$ is the density of the stationary measure of $X$ and $\theta$ is the
reflection angle, \eqref{ju21.22}
suggests that $\mu_0$ represents one half of the speed of rotation of $X$ about $0$. Hence, one might hope that
$\lim_{t\to \infty} \arg X_t /t $ is equal to a constant multiple of $
\mu_0$, a.s. Unfortunately, this simple interpretation of $\mu_0$ is false because $\arg X_t$ behaves like a Cauchy process (see \cite{Spi,BerWer1}) and, therefore, the law of large numbers does not hold for $\arg X_t$.
We will identify $\mu_0$ with the speed of rotation using two other representations in Theorem \ref{f3.2} (ii)-(iii).
We need the following definitions to state the representations. First of all, recall that a random variable has the Cauchy distribution if its density is $1/(\pi(1+x^2))$ for $x\in \R$. Next we will define a new measure of winding speed which does not include large windings if they occur during a single excursion from the boundary.
Recall definitions related to excursions from Section \ref{sec:exc}.
We will say that $\exc_s$ belongs to the family $\calE_t^L $ of
excursions with ``large winding number'' if
$s + \zeta(\exc_s) \leq t$ and
 $|\arg X_s - \arg X_{s +\zeta(\exc_s)-}| > 2\pi$, where $X_{u-}$ denotes the
left-hand limit.
For $z\in D_*$, let
\begin{align}\label{n11.1}
\arg^* X_t &= \arg X_t - \sum_{s: \, \exc_s \in \calE^L_t}
\left(\arg X_{s + \zeta(\exc_s)-} - \arg X_s \right),\\
\arg^* (X_t-z) &= \arg (X_t-z) - \sum_{s: \, \exc_s \in \calE^L_t}
\left(\arg (X_{s + \zeta(\exc_s)-}-z) - \arg (X_s-z) \right).
\label{n11.2}
\end{align}

\begin{theorem}\label{f3.2}
In parts (i)-(iii), we
assume that a $C^2$ function $\theta: \prt D_* \to (-\pi/2, \pi/2)$ is given.

\begin{enumerate}[\rm (i)]
\item {\rm (\cite[Thm.~2.18]{HLS})}
The density of the stationary measure for $X$ defined in \eqref{j13.1} is a
positive harmonic function $h$ in $D_*$ given by \eqref{ju21.21} (see also
\eqref{ju20.4}).
\item
With probability 1, $X$ is continuous and, therefore, $\arg X_t$ is well
defined for $t>0$.
Let  $\mu_0 \in \R$ be given by \eqref{ju21.21}.
For every $z\in \ol D_*$,
the distributions of $\frac 1t \arg X_t - \mu_0$ under $\P_z$ converge to the Cauchy distribution when $t\to \infty$.
\item For every $y\in \ol D_*$,
\begin{align}\label{j15.2}
\lim_{t\to \infty}  \frac1{t}  {\arg^* X_t}  &= \mu_0,
\quad \P_y \hbox{-a.s.}
\end{align}
The formula holds more generally. For   any $y,z\in D_*$,
\begin{align} \label{j16.1}
\lim_{t\to \infty}\frac{1}{t}   \arg^* (X_t-z)   &= \mu(z),  \quad \P_y \hbox{-a.s.,}
\end{align}
where   $\mu(z)$ is given by \eqref{aug28.2}.

\item
Conversely, suppose we are given any $\mu_0 \in \R$ and a harmonic function $h$ in $D_*$ that is
 $C^2$ in $\ol D_*$, positive on $\ol D_*$,
and satisfies $h(0)=1/\pi$. Let $\theta \lra (h,\mu_0)$.
Then for every $x_0\in \ol D_*$,
there exists a unique in distribution process $X$ satisfying \eqref{j13.1} with this $\theta$. Its stationary distribution has density $h$ and \eqref{j15.2} holds.
\end{enumerate}
\end{theorem}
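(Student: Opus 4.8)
The plan is to dispatch (i) by citing \cite{HLS}, to reduce (ii)--(iii) to a statement about one--dimensional reflected Brownian motion via the conformal map $w\mapsto\log w$, and to obtain (iv) by unwinding the correspondence of Theorem~\ref{aug27.0} and quoting (i)--(iii). The common starting point is a change of coordinates: since $X$ never hits $0$, It\^o's formula for the analytic function $\log$ (no second--order term appears, as the complex quadratic variation of $B$ vanishes) gives $\log X_t=\log X_0+\int_0^t X_s^{-1}\,dB_s+\int_0^t X_s^{-1}\bv_\theta(X_s)\,dL_s$. The first integral is a planar Brownian motion time--changed by the clock $A_t=\int_0^t|X_s|^{-2}\,ds$, a continuous strictly increasing bijection of $[0,\infty)$ with $A_t\ge t$; let $t(u)$ denote its inverse, so $t(A_t)=t$. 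Writing $\rho_u+i\gamma_u:=\log X_{t(u)}$ and using $\bv_\theta(x)/x=-1+i\tan\theta(x)$ for $x\in\partial D_*$, one finds that $\rho$ is Brownian motion on $(-\infty,0]$ reflected at $0$, that $\gamma_u=\gamma_0+\beta_u+\int_0^u\tan\theta(e^{i\gamma_v})\,d\ell_v$ where $\beta$ is a one--dimensional Brownian motion independent of $\rho$ (it is driven by the angular part of $B$, orthogonal to the radial part that drives $|X|$ and hence $\rho$), where $\ell$ is the local time of $\rho$ at $0$ with $\ell_{A_t}=L_t$, and that $t(u)=\int_0^u e^{2\rho_v}\,dv$. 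In particular $\arg X_t=\gamma_{A_t}$.

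For (iii) I would first treat the boundary drift. Here $X$ is positive recurrent with stationary law $h(z)\,dz$, and a Green's identity applied to the semimartingale decomposition $f(X_t)-f(X_0)-\tfrac12\int_0^t\Delta f(X_s)\,ds=(\text{mart.})+\int_0^t\bv_\theta\!\cdot\!\nabla f(X_s)\,dL_s$ identifies the Revuz measure of $L$ (with respect to $h\,dz$) as $\tfrac12\,h\,|dx|$ on $\partial D_*$; since $h$ is continuous in the present case, $\int_{\partial D_*}h\,|dx|=2\pi h(0)=2$, so the ergodic theorem yields $L_t/t\to1$ and $\tfrac1t\int_0^t\tan\theta(X_s)\,dL_s\to\tfrac12\int_{\partial D_*}\tan\theta\,h\,|dx|=\mu_0$ a.s., by \eqref{ju21.22}. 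For the martingale part I would decompose $\arg X_t$ over the excursions of $X$ from $\partial D_*$ and use: (a) under the excursion law $H^x$ the total winding of an excursion is symmetric about $0$ (reflect $D_*$ across the diameter through $x$, which fixes $x+\delta\n(x)=(1-\delta)x$ and negates the winding); and (b) this winding has a Cauchy--type tail $\asymp 1/w$, since in the $u$--clock the excursions of $X$ from $\partial D_*$ are exactly the excursions of $\rho$ from $0$, whose $u$--lengths carry the $\tfrac32$--stable tail, and the winding is $\beta$ over such an interval. By (a) and the exit system formula \eqref{exitsyst}, the completed excursions of winding $\le2\pi$ contribute a mean--zero sum over a Poisson point process in the local--time scale, hence $o(t)$ by the law of large numbers; the completed excursions of winding $>2\pi$ are precisely what $\arg^{*}$ removes; and the excursion in progress at time $t$ is shown, via (b) and a Borel--Cantelli estimate, to be $o(t)$ along $t\to\infty$. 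This yields \eqref{j15.2}.

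For (ii), combining the first paragraph with (iii) gives $\tfrac1t\arg X_t-\mu_0=\tfrac1t\beta_{A_t}+o(1)$. Since $\beta$ is independent of $A$, $\E[\exp(i\xi\beta_{A_t}/t)]=\E[\exp(-\xi^2A_t/(2t^2))]$, so the claim reduces to showing that $A_t/t^2$ converges in distribution to the one--sided $\tfrac12$--stable law with Laplace transform $e^{-\sqrt{2s}}$ (equivalently, to $1/N^2$ with $N\sim\mathcal N(0,1)$): since $A_t=\tau(L_t)$ with $\tau$ the inverse local time of the reflected Brownian motion $\rho$ at $0$---a $\tfrac12$--stable subordinator---and $L_t/t\to1$, Brownian scaling yields the limit. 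The normalizations make the limiting Cauchy law standard: the Skorokhod local time $L$ carries over to the semimartingale normalization of $\ell$, and $\int_{\partial D_*}h\,|dx|=2$ pins the scale at $1$; hence $\E[\exp(i\xi\beta_{A_t}/t)]\to\E[\exp(-\xi^2/(2N^2))]=e^{-|\xi|}$. The general a.s.\ formula \eqref{j16.1} is then obtained by conjugating with the M\"obius self--map $\phi_z(w)=(w-z)/(1-\overline z w)$ of $D_*$ sending $z$ to $0$: one has $\arg(X_t-z)=\arg\phi_z(X_t)+O(1)$ since $\arg(1-\overline z X_t)$ is bounded; $\phi_z(X)$ is, after a time change, an ORBM in $D_*$ with angle $\theta\circ\phi_z^{-1}$ whose $\calH$--parameters are, by Lemma~\ref{ju21.1}, $\bigl(h\circ\phi_z^{-1}/\|h\circ\phi_z^{-1}\|_1,\ \mu(z)/\|h\circ\phi_z^{-1}\|_1\bigr)$; and the conformal clock has rate $\tfrac1t\int_0^t|\phi_z'(X_s)|^2\,ds\to\int_{D_*}|\phi_z'|^2h=\|h\circ\phi_z^{-1}\|_1=\pi h(z)$. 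Applying \eqref{j15.2} to $\phi_z(X)$, then undoing the time change (the two factors $\pi h(z)$ cancel) and the bounded correction (the discrepancy in the ``$>2\pi$'' threshold affects only an $o(t)$ mean--zero sum), gives \eqref{j16.1}.

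For (iv), given such $(h,\mu_0)$ let $\theta\lra(h,\mu_0)$; by \eqref{aug27.6}, $\theta(z)=-\arg\bigl(h(z)+i\wt h(z)-i\mu_0/\pi\bigr)$. As $h$ is $C^2$ on $\ol D_*$ its boundary values are Dini continuous, so $\wt h$ is continuous and bounded on $\ol D_*$; since moreover $h\ge c>0$ on $\ol D_*$, the argument here is taken of a complex number with real part $\ge c$ and bounded imaginary part, whence $\|\theta\|_\infty$ stays in a closed subinterval of $(-\pi/2,\pi/2)$ and $\theta$ is smooth enough for Theorem~\ref{j25.1}. Existence and uniqueness in distribution of the solution of \eqref{j13.1} with this $\theta$ then follow from Theorem~\ref{j25.1}(i)--(ii); its stationary density is the positive harmonic function corresponding to $\theta$ by part~(i), which by the injectivity in Theorem~\ref{aug27.0} is the given $h$; and \eqref{j15.2} is part~(iii). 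The main obstacle throughout is the constant bookkeeping for (ii)---establishing the $\tfrac12$--stable limit for $A_t/t^2$ with exactly the normalization that makes the limiting Cauchy law standard, which requires carefully reconciling the Skorokhod local time, the Revuz measure computation, the identity $\int_{\partial D_*}h\,|dx|=2$, and the scaling of inverse local time---with a secondary delicate point being the Borel--Cantelli control of the in--progress excursion in (iii).
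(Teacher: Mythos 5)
Your route is essentially the paper's: (i) is quoted from \cite{HLS}; (ii)--(iii) are transferred to a half-plane picture by a logarithmic change of variables (the paper uses the exponential map from the left half-plane, the same device), with the boundary drift handled through the Revuz measure $\tfrac12 h\,|dx|$, \eqref{ju21.22} and the ergodic/limit-quotient theorem, and the fluctuations through the Cauchy structure of the windings; \eqref{j16.1} is obtained by M\"obius conjugation, Lemma \ref{ju21.1} and the conformal clock, exactly as in the paper's proof of Theorem \ref{j15.7}(iv); and (iv) unwinds Theorem \ref{aug27.0} and quotes Theorem \ref{j25.1} as the paper does. Your version of (ii) is in fact a slightly cleaner packaging: the exact identity $\arg X_t=\gamma_0+\beta_{A_t}+\int_0^t\tan\theta(X_s)\,dL_s$ together with the independence of $\beta$ from the radial clock replaces the paper's correction terms $(C_{L_t}-C_t)$ and $(\arg X_t-\arg X_{T_t})$, at the price of the stable limit $A_t/t^2\Rightarrow\tau(1)$, which your subordinator/scaling argument does deliver with the correct normalization.

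There is, however, a genuine gap in (iii), at precisely the point where the paper needs its sharpest input. Your control of the excursion in progress at time $t$ rests on the tail bound $H^x(\sup_t|\hbox{winding}|\ge w)\asymp 1/w$ plus ``a Borel--Cantelli estimate''; this cannot work as stated. By the exit system formula \eqref{exitsyst}, the expected number of excursions whose maximal winding exceeds $\alpha$ times the local time at their start is of order $\int_s^\infty \frac{du}{\alpha u}=\infty$, so the relevant exceptional events have non-summable intensity and Borel--Cantelli yields nothing. The paper's argument hinges on the quadratic estimate \eqref{n13.9}: for the family $F_a$ of excursions whose net endpoint-to-endpoint winding is at most $2\pi$ but whose maximal winding is at least $a$, one has $H^x(F_a)\le c/a^2$, obtained by applying the strong Markov property under the excursion law at the first time the winding reaches $a$ and paying a second factor of order $1/a$ to return to within $2\pi$ of the start; this makes $\int_s^\infty c/(\alpha u)^2\,du$ finite and the Borel--Cantelli step legitimate, while excursions with net winding larger than $2\pi$ are exactly the ones that $\arg^*$ discards upon completion. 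Without this second-moment-type refinement (or an equivalent), the passage in \eqref{j15.2} from boundary times, where your mean-zero truncated-winding LLN is fine, to all times $t$ does not go through. Two smaller points: the identification of the Revuz measure of $L$ as $\tfrac12 h\,|dx|$ is only asserted via ``a Green's identity,'' whereas the paper proves it through the $\eps$-collar occupation approximation and a uniform-integrability argument based on the Ray--Knight description of the Bessel local times; and in (iv) you should justify the regularity of $\theta\lra(h,\mu_0)$ before quoting Theorem \ref{j25.1} (the paper argues $\theta$ is $C^2$ from \eqref{ju20.3}; alternatively one can invoke the disc-specific pathwise uniqueness for measurable $\theta$ with $\|\theta\|_\infty<\pi/2$ recalled in Section \ref{sec:prelim}).
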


\begin{remark}
(i) We could have defined the family $\calE_t^L $ of excursions $\exc_s$ with ``large winding number'' as those satisfying
$s + \zeta(\exc_s) \leq t$ and  $|\arg X_s - \arg X_{s + \zeta(\exc_s)-}| > a$, where $a>0$ is not necessarily $2\pi$. It turns out that \eqref{j15.2} holds for any $a>0$. The limit in \eqref{j15.2} holds for any value of $a$ because the only thing that matters in \eqref{n11.1} is that the large jumps of the Cauchy-like process $\arg X$ are removed. The ``remaining part'' of this process satisfies the law of large numbers and has  mean $\mu_0 t$, no matter how large  the threshold for the ``large jumps'' is.
We have chosen $a=2\pi$ because this value has a natural geometric interpretation and is invariant, in a sense, under conformal mappings.

(ii)
We will prove \eqref{j16.1} using \eqref{f6.2} and a purely analytic argument.
Formula \eqref{j16.1} has the same heuristic meaning as
\eqref{ju21.22} as a rotation rate, except that it represents the sum (integral) of infinitesimally small increments of the angle around $z$, not 0.

(iii) In view of Theorem \ref{aug27.0}, if the rotation rate $\mu(z)$
is known for all $z\in D_*$, it completely determines $\theta$ and $h$.
Moreover, due to the harmonic character of $\mu(z)$, if this function
is known in an arbitrarily small non-empty open subset of $D_*$,
this also determines $\theta$ and $h$.

(iv) Theorem \ref{aug27.0} and the definition of the function space
$\calR$ show which harmonic
functions $\mu(z)$ represent rotation rates for an ORBM. See also
Proposition \ref{ju21.9}. Roughly speaking,
$\mu(z)$ represents rotation rates for an ORBM if its oscillation over
$\ol D_*$ is not too large. There is no restriction, however, on the
average value of $\mu(z)$. If $\mu(z)$ and $\mu_1(z)$ represent the
rotation rates for
two ORBM's, and $\mu(z) = c + \mu_1(z)$ for some constant $c$ and all
$z$ then  $\wt \mu=\wt \mu_1$. By \eqref{aug27.4} of Theorem
\ref{aug27.0}, the corresponding stationary densities are the same for both
ORBM's.

(v) Parts (ii) and (iii) of Theorem \ref{f3.2} are similar in spirit to \cite[Thm.~7.1]{legallyor} although that paper is concerned with Brownian motion with drift, not reflection.
\end{remark}

\medskip

\subsection{ORBMs on $D_*$ with general reflection angles $\theta$}
\label{subs-main2}

Suppose $\theta\in \calT$.  
Then $\theta\not\equiv \pi/2$ and $\theta\not\equiv-\pi/2,$ although 
$\theta$ could be tangential on a strict subset of the boundary
$\partial D_*$. 
In Theorem \ref{j15.5} we  show that ORBMs on the disk $D_*$ 
associated with $\theta$ can be obtained as limits of ORBMs 
on  $D_*$ with $C^2$ angles of reflection, which are well defined by
Theorem \ref{j25.1}.   Then in Theorem \ref{T:3.8} we establish a
conformal invariance property for such ORBMs. 
If there do exist  points on the boundary
at which $\theta$ is tangential,  the associated ORBM
will not in general be continuous, and thus one has to 
carefully define the topology in which the above limit procedure 
can be carried out. 

 We start by introducing some relevant notation to
define this topology.  Let
\begin{align}\label{n26.11}
N^+_\theta &= \{x\in \prt D_*: \theta(x) = \pi/2\},\qquad
N^-_\theta = \{x\in \prt D_*: \theta(x) = -\pi/2\}.
\end{align}
Since we identify functions in $\calT$ that  are equal to each other
a.e.,
\begin{align}\label{n26.10}
|N^+_\theta| < 2\pi
\qquad \text {  and  } \qquad
 |N^-_\theta| < 2\pi.
\end{align}
We will say that $x\in \interior N^+_\theta$ if
$\theta\equiv \pi/2$ a.e. in some neighborhood of $x$.
The definition of  $\interior N^-_\theta$ is analogous.
For $x= e^{i\alpha}\in \interior N^+_\theta$,
let $ \alpha^+$ be the largest real number such that $\{ e^{it}: t \in [\alpha, \alpha^+)\} \subset \interior N^+_\theta$, and
let $\beta^+(x) = e^{i\alpha^+}$. Similarly,
for $x= e^{i\alpha}\in \interior N^-_\theta$,
let $ \alpha^-$ be the smallest real number such that $\{ e^{it}: t \in (\alpha^-, \alpha]\} \subset \interior N^-_\theta$, and
let $\beta^-(x) = e^{i\alpha^-}$.

We recall below the definition of the $M_1$ topology introduced by
Skorokhod in \cite{SKO56}. We will use the $M_1$ topology rather than
the more popular $J_1$ topology because we will be concerned with
convergence of continuous processes to (possibly) discontinuous
processes. In the $J_1$ topology, a
sequence of continuous processes cannot converge to a discontinuous process.
We will also define an $M_1^\calT$ topology, appropriate for our setting.

\begin{definition}\label{n25.5}
(i) Suppose that $0<T< \infty$ and $x: [0,T] \to \R^n$ is a c\`adl\`ag function. The
graph $\Gamma_x$ is the set consisting of all pairs $(a, t)$ such that
$0\leq t\leq T$ and $a\in [x(t-), x(t)]$ (here $[x(t-), x(t)]$ is the line segment between the left-hand limit $x(t-)$ and $ x(t)$ in $\R^n$). A pair of functions $\{(y(s), t(s)), s \in [0,1]\}$ is a parametric representation of $\Gamma_x$ if $y$ is continuous, $t$ is continuous and non-decreasing, and $(v,u) \in \Gamma_x$ if and only if $(v,u) = (y(s), t(s))$ for some $ s \in [0,1]$.
We say that {\it $x_n$ converge to $x$ in $M_1$ topology} if there exist parametric representations $\{(y(s), t(s)), s \in [0,1]\}$ of $\Gamma_x$ and $\{(y_n(s), t_n(s)), s \in [0,1]\}$  of $\Gamma_{x_n}$ such that
\begin{align}\label{n25.6}
\lim_{n\to \infty} \sup_{s\in[0,1]} | (y_n(s), t_n(s)) - (y(s), t(s))| = 0.
\end{align}

(ii) If $x: [0,\infty) \to \R^n$ then we
say that {\it $x_n(t)$ converge to $x(t)$ in $M_1$ topology}
if they converge to $x$ on $[0,T]$ in $M_1$ topology for every $0 < T < \infty$. This is equivalent to the following statement.
There exist parametric representations $\{(y(s), t(s)), s \in [0,\infty)\}$ of $\Gamma_x$ and $\{(y_n(s), t_n(s)), s \in [0,\infty)\}$  of $\Gamma_{x_n}$ such that for every $T\in(0,\infty)$,
\begin{align}\label{n25.7}
\lim_{n\to \infty} \sup_{s\in[0,T]} | (y_n(s), t_n(s)) - (y(s), t(s))| = 0.
\end{align}

(iii)
Consider $\theta \in \calT$.
We will say that {\it $x: [0,\infty) \to \ol D_*$ belongs to $\calA_\theta$} if it is
c\`adl\`ag and satisfies the following conditions. For all $t\geq 0$,
$x_{t-} \ne x_t$ if and only if $x_{t-} \in \interior N^+_\theta \cup  \interior N^-_\theta$. Moreover, if $x_{t-} \in \interior N^+_\theta $ then $x_t = \beta^+(x_{t-})$. If $x_{t-} \in \interior N^-_\theta $ then $x_t = \beta^-(x_{t-})$.
Let $\calA_\calT=\bigcup_{\theta\in\calT}\calA_\theta$.

(iv) Assume that $\theta \in \calT$ and $x\in \calA_\theta$. If $x_{t-}= e^{i\alpha}\in \interior N^+_\theta$ and
$x_t = \beta^+(x_{t-}) = e^{i\alpha^+}$, then we let
 $[x_{t-}, x_t]_\theta
= \{e^{it}: t\in[\alpha, \alpha^+]\} $
be the arc on $\prt D_*$ between $x_{t-}$ and $x_t$. Thus
$\theta(e^{is})=\pi/2$ for a.e. $e^{is}\in [x_{t-}, x_t]_\theta $.
Similarly, if $x_{t-}= e^{i\alpha}\in \interior N^-_\theta$ and
$x_t = \beta^-(x_{t-}) = e^{i\alpha^-}$, then we let
$[x_{t-}, x_t]_\theta = \{e^{it}: t\in[\alpha^-, \alpha]\} $.

\smallskip

We define
the graph $\Gamma^\theta_x$ as the set of all pairs $(a, t)$ such that $a=x_t$ if $x$ is continuous at $t$ and
 $a\in [x_{t-}, x_t]_\theta$
if $x_{t-} \ne x_t$. A pair of functions $\{(y(s), t(s)), s \in [0,\infty)\}$ is a parametric representation of $\Gamma^\theta_x$ if $y$ is continuous, $t$ is continuous and non-decreasing, and $(v,u) \in \Gamma^\theta_x$ if and only if $(v,u) = (y(s), t(s))$ for some $ s \in [0,\infty)$.
Suppose that $x_n \in \calA_{\theta_n}$ for some $\theta_n \in \calT$, $n\geq 1$, and $x\in \calA_\theta$ for some $\theta \in \calT$.
We say that $x_n$ converge to $x$ in $M^\calT_1$ topology if there exist parametric representations $\{(y(s), t(s)), s \in [0,\infty)\}$ of $\Gamma^\theta_x$ and $\{(y_n(s), t_n(s)), s \in [0,\infty)\}$  of $\Gamma^{\theta_n}_{x_n}$ such that for every $T\in(0,\infty)$,
\begin{align}\label{n25.71}
\lim_{n\to \infty} \sup_{s\in[0,T]} | (y_n(s), t_n(s)) - (y(s), t(s))| = 0.
\end{align}

\end{definition}

Some \cadlag functions $x$ (for example, continuous functions) belong to more than one
family $\calA_\theta$. We leave it to the reader to check that the definitions in (iv)
are not affected by the choice of $\calA_\theta$.

We will extend the definition of $t\to \arg X_t$ to (some) processes that are not continuous. Although it is impossible to define a continuous version of $t\to \arg X_t$ for a process $X$ that is discontinuous, we will define a functional $\{X_t, t\geq0\} \to \{\barg X_t, t\geq 0\}$ in a way that reflects the structure of jumps in a natural way, leading to heuristically appealing results. The functional $\barg$ will be defined relative to $\theta$ but the dependence will be suppressed in the notation. Consider a
function $x\in\calA_\theta$ such that $x_t \ne 0$ for all $t\geq 0$.
Consider any parametric representation
$\{(y(s), t(s)), s \in [0,\infty)\}$ of $\Gamma^\theta_x$ and let $s \to \arg y(s)$ be the continuous version of $\arg y$ with $\arg y(0) \in [0, 2\pi)$. We let $\barg x_u = \arg y(s)$ where $s=\sup\{r: t(r)=u\}$. It is elementary to check that this definition of
$\barg x_u$ does not depend on the choice of
parametric representation
$\{(y(s), t(s)), s \in [0,\infty)\}$ of $\Gamma^\theta_x$.

Recall the definitions \eqref{n11.1}-\eqref{n11.2} and notation introduced in the paragraph preceding them. We define $\barg^*$ in an analogous way.
For $z\in D_*$, let
\begin{align*}
\barg^* X_t &= \barg X_t - \sum_{s: \, \exc_s \in \calE^L_t}
\left(\barg X_{s + \zeta(\exc_s)-} - \barg X_s \right),\\
\barg^* (X_t-z) &= \barg (X_t-z) - \sum_{s: \, \exc_s \in \calE^L_t}
\left(\barg (X_{s + \zeta(\exc_s)-}-z) - \barg (X_s-z) \right).
\end{align*}

\begin{theorem}\label{j15.5}
Consider $\theta\in \calT$. There exists a sequence of $C^2$ functions $\theta_k: \prt D_* \to (-\pi/2, \pi/2)$
which converges to $\theta$ in weak-* topology as elements of the dual space of $L^1(\prt D_*)$, that is,
$$
\lim_{k\to \infty} \int_{\prt D_*} f(x) \theta_k (x) |dx|  =  \int_{\prt D_*} f(x) \theta (x) |dx|
\quad \hbox{for every } f \in L^1 (\prt D_*).
$$
 Fix such a sequence $\{\theta_k\}$ and let $X^k$ be defined by the following SDE analogous to \eqref{j13.1},
\begin{align}\label{n19.1}
 X^k_t = z_k + B_t + \int_0^t  \bv_{\theta_k} (X^k_s) dL^k_s  \qquad \hbox{for } t\geq 0.
\end{align}
Assume that $z_k \to z_0 \in D_*$ as $k\to \infty$, $z_0 \ne 0$, and recall \eqref{n26.10}.
\begin{enumerate}[\rm (i)]
\item  {\rm (\cite[Thm. 1.1]{BurMar})}
$X^k$'s converge weakly in $M^\calT_1$ topology to a conservative Markov process $X$ on $\ol D_*$ such that $X_0 = z_0$, a.s.
Moreover, there is a \cadlag version of $X$ and for this version, $X\in\calA_\theta$, a.s.
The process $\{X_t; t\in [0, \sigma_{\partial D_*})\}$, where $\sigma_{\prt D_*}:=\inf\{t>0: X_t \in \partial D_*\}$, is Brownian motion
killed upon leaving $D_*$.

 \item
 $X^k$'s converge to $X$  in the sense of finite dimensional distributions.

\item
The Markov process $X$ has a stationary measure whose density $h$ is given by \eqref{ju21.21}.

\item
The functional $\{x_s, s\in[0,\infty)\} \to \{\barg x_s, s\in[0,\infty)\}$ is a continuous mapping from the set
$\calA_\calT$ equipped with $M^\calT_1$ topology to the set of \cadlag functions equipped with the $M_1$ topology. For every $t\geq 0$, the distributions of
 $\barg X^k_t$
converge to the distribution of $\barg X_t$.
\item
Let $\mu_0$ be as in \eqref{ju21.21}.
Then for every $z\in \ol D_*$,
the distributions of $\frac 1t \barg X_t - \mu_0$ under $\P_z$ converge to the Cauchy distribution when $t\to \infty$.
\item
For every $y\in \ol D_*$, $\P_y$-a.s.,
\begin{align}\label{d11.1}
\lim_{t\to \infty} \frac{1}{t}  \barg^* X_t  &= \mu_0.
\end{align}
Moreover, for any $y,z\in D_*$,
\begin{align}
\lim_{t\to \infty} \frac{1}{t}  \barg^* (X_t-z)  &= \mu(z),
\qquad  \P_y \hbox{-a.s.,}
\label{d11.2}
\end{align}
where  $\mu(z)$ is the harmonic function defined by   \eqref{aug28.2}.

\item
Assume that $\theta \in\calT \leftrightarrow (h,\mu_0)\in \calH$.
Then for every $x\in \prt D_*$, $ x \in \Gamma^\theta_X$ with probability 1
 if and only if
\begin{align}\label{sept6.1}
\int_0^1  e^{-\wt \theta(rx)}\cos \theta(rx) \frac{dr}{1-r}
= \int_0^1 \frac{h(rx)/(\pi \cos \theta(0))}{h(rx)^2+(\wt h(rx)-\mu_0/\pi)^2}
\frac{dr}{1-r} < \infty.
\end{align}
\item Suppose that $\theta, \bar\theta_k\in \calT$ and
    $\bar\theta_k$ converge to $\theta$ in weak-* topology.
    Let $\bar X^k$'s have their distributions determined by
    $\bar\theta_k$'s in the same way as $X$'s distribution
    is determined by $\theta$. Assume that $\bar X^k_0 =
    z_k$, $X_0= z_0$ and $z_k \to z_0$ as $k\to\infty$.
    Then $\bar X^k$ converge weakly to $X$ in $M^\calT_1$
    topology.
\end{enumerate}
\end{theorem}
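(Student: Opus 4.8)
The plan is to reduce part (viii) to part (i) via a diagonal argument. The structural fact I would use is that, for each $\theta\in\calT$, the process $X$ is by construction the weak $M^\calT_1$-limit of ORBMs with $C^2$ reflection angles, and that this limit does not depend on the approximating $C^2$ sequence: given two $C^2$ sequences both converging weak-* to $\theta$, interlace them into a single $C^2$ sequence that still converges weak-* to $\theta$ (and started at the common initial point) and apply part (i) to conclude that the two associated limit laws coincide. Thus ``$X$'s distribution determined by $\theta$'' is unambiguous, and likewise each $\bar X^k$ is the well-defined weak $M^\calT_1$-limit of $C^2$-ORBMs whose angles tend weak-* to $\bar\theta_k$ and whose starting points tend to $z_k$.

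First I would fix metrics. Since $L^1(\prt D_*)$ is separable, the weak-* topology on $\{g\in L^\infty(\prt D_*):\|g\|_\infty\le\pi/2\}\supseteq\calT$ is metrizable; call a compatible metric $\rho$. Next, as for the classical $M_1$ topology on Skorokhod space, the $M^\calT_1$ topology makes $\calA_\calT=\bigcup_{\theta\in\calT}\calA_\theta$ separable and metrizable, so weak convergence of Borel probability measures on it is metrizable; call a compatible metric $d$ (e.g.\ the Prokhorov metric). Write $\mathrm{Law}(Y)$ for the distribution of a process $Y$.

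For each $k$, I would invoke the density statement at the beginning of Theorem \ref{j15.5} to pick $C^2$ functions $\theta_{k,j}\colon\prt D_*\to(-\pi/2,\pi/2)$ with $\rho(\theta_{k,j},\bar\theta_k)\to0$ as $j\to\infty$, and let $X^{k,j}$ solve \eqref{n19.1} with angle $\theta_{k,j}$ started at $z_k$. By part (i), $X^{k,j}\to\bar X^k$ weakly in $M^\calT_1$ as $j\to\infty$, so $d(\mathrm{Law}(X^{k,j}),\mathrm{Law}(\bar X^k))\to0$. Choose $j(k)$ so large that $\rho(\theta_{k,j(k)},\bar\theta_k)<1/k$ and $d(\mathrm{Law}(X^{k,j(k)}),\mathrm{Law}(\bar X^k))<1/k$. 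Since $\rho(\bar\theta_k,\theta)\to0$ by hypothesis, the $C^2$ angles $\theta_{k,j(k)}$ converge weak-* to $\theta$; also $z_k\to z_0$. Hence part (i) applies to the sequence $\{X^{k,j(k)}\}_k$ and gives $X^{k,j(k)}\to X$ weakly in $M^\calT_1$, where $X$ is the process in the statement (using the sequence-independence above). The triangle inequality then gives
\begin{align*}
d(\mathrm{Law}(\bar X^k),\mathrm{Law}(X))
&\le d(\mathrm{Law}(\bar X^k),\mathrm{Law}(X^{k,j(k)}))+d(\mathrm{Law}(X^{k,j(k)}),\mathrm{Law}(X))\\
&< \tfrac1k+d(\mathrm{Law}(X^{k,j(k)}),\mathrm{Law}(X))\;\longrightarrow\;0,
\end{align*}
so $\bar X^k\to X$ weakly in $M^\calT_1$, which is the assertion.

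The extraction of $j(k)$ is routine; I expect the main obstacle to be the topological bookkeeping that makes it legitimate — confirming that the $M^\calT_1$ topology on $\calA_\calT$ (a union of the spaces $\calA_\theta$ over varying $\theta$) is separable and metrizable, so that weak convergence of laws is itself metrizable, and confirming the sequence-independence of the limit attached to $\theta$ by part (i). Once these are in place, part (viii) is a formal consequence of part (i) with no additional probabilistic input.
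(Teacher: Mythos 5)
Your proposal addresses only part (viii), taking part (i) (and the sequence-independence of the limit, cf.\ Remark \ref{j15.10}(i)) as given; for that part it is correct and is essentially the paper's own argument: a diagonal choice of $C^2$ angles with $\rho(\theta_{k,j(k)},\bar\theta_k)<1/k$ and Prokhorov distance between $X^{k,j(k)}$ and $\bar X^k$ less than $1/k$, identification of the limit of the diagonal sequence with $X$, and the triangle inequality. The only cosmetic difference is that the paper identifies that limit by interlacing the diagonal sequence with the original $\{\theta_k\}$ and applying part (i) once, whereas you invoke sequence-independence as a separate fact proved by the same interlacing trick.
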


\medskip

We will call the process $X$ obtained in Theorem \ref{j15.5}
ORBM with reflection angle $\theta$.

\begin{remark}\label{j15.10}
(i) Note that the distribution of $X$ in Theorem \ref{j15.5}
(i) does not depend on the approximating sequence $\theta_k$
because if we have two sequences $\{\theta_k\}$ and $\{\bar
\theta_k\}$ converging to $\theta$ then we can apply the
theorem to the sequence $\theta_1, \bar \theta_1, \theta_2,
\bar \theta_2, \dots$

(ii) Suppose that $z_0 \in D_*$, $\mu_0\in \R$, and
$h $ is positive and harmonic in $D_*$ with $h(0)=1/\pi$.
By Theorem \ref{aug27.0}, we can find
$\theta\in \calT\leftrightarrow (h,\mu_0)\in \calH$.
Let $X$ be the process corresponding to $z_0 $ and $\theta$ as in Theorem \ref{j15.5}.
Then $X$ has a stationary distribution with the density $h$ and $\mu_0$ is the rate of rotation of $X$ in the sense of Theorem \ref{j15.5} (v)-(vi).

(iii)
Theorem \ref{j15.5} establishes  existence of ORBM for all angles $\theta$ of oblique reflection. ORBMs  can be uniquely parametrized either by  $\theta \in \calT$ or by pairs $(h,\mu_0) \in \calH$. We will write $X \leftrightarrow \theta$ or $X \leftrightarrow (h,\mu_0)$.

(iv) If $\theta=\pi/2$ a.e. on an open arc $I\subset \prt D_*$ then as
in the proof of Proposition \ref{ju21.18}, $\theta+i\widetilde\theta$
extends to be analytic across $I$, and hence so does
$G=e^{i(\theta+i\widetilde\theta)}$. In this case, for $x\in I$,
\begin{align}
\lim_{r\to 1} \frac{e^{-\widetilde\theta(rx)}\cos\theta(rx)}{r-1}
=\Re \lim_{r\to 1} \frac{G(rx)-G(x)}{rx-x}x
=\Re G'(x)x.
\end{align}
Thus the integral in \eqref{sept6.1} is finite for each $x\in I$. A
similar statement holds if $\theta=-\pi/2$ a.e. on $I$.

Note that the process $X$ itself will not hit a fixed point $x\in I$.
The reason is that $X$ has only a countable number of excursions
from the boundary of $\prt D_*$ and the distribution of the location
of the endpoint of an excursion has a density. Hence, with probability 1,
no excursion will end at $x$. If an excursion ends at a point in $I$,
the process  $X$ will jump at that time to an end of the
interval where $\theta=\pi/2$ a.e. Thus, $X$ itself will avoid $x$
forever but the same argument shows that $x\in \Gamma^\theta_X$ with probability 1
because $\Gamma^\theta_X$ contains the arcs between the endpoints of excursions
hitting points inside $I$ and the points to which $X$ jumps at those times.

(v) Let $\nu $ be the positive measure on $\prt D_*$  defined by
$h(z) = \int_{\prt D_*} K_x(z) \nu(dx)$, where $K_x(z)$ is the Poisson
kernel for $z\in D_*$. Fix $x\in \partial D_*$ and write
\begin{align*}
h(rx)=c\frac{1+r}{1-r} + \int_{\prt D_*} \frac{1-r^2}{|y-rx|^2}
d\sigma(y)
\end{align*}
where $\sigma$ is a positive measure with $\sigma(\{x\})=0$ and
$c=\nu(\{x\})$.
Then
\begin{align}\label{sept6.2}
\lim_{r\to 1} (1-r)h(rx) = 2c
\end{align}
as can be seen by splitting the integral into $\int_I+\int_{\prt
D_*\setminus I}$ where $x\in I$ and $\sigma(I)< \eps$.
If $c=\nu(\{x\}) > 0$, then
\begin{align*}
\int_0^1 \frac{h(rx)}{h(rx)^2+(\wt h(rx)-\mu_0/\pi)^2}
\frac{dr}{1-r} \le \int_0^1 \frac{1}{(1-r)h(rx)} dr < \infty.
\end{align*}
and so  $x\in \Gamma^\theta_X$ with probability 1 by
Theorem \ref{j15.5} (vii), where $X \lra (h, \mu_0)$.

(vi)  The condition $\nu(\{x\}) > 0$ is stronger than the integrability
condition \eqref{sept6.1}. For
example,
if $h(z)=\frac{1}{\pi} \Re (1-z)^{-p} $, with $0 < p < 1$,
then
$\int_0^1 \frac{1}{(1-r)h(r)} dr < \infty$ so that
\eqref{sept6.1} holds at $x=1$. However, by \eqref{sept6.2},  the corresponding
positive measure $\nu$ satisfies $\nu(\{1\}) = 0.$

(vii) Suppose $\mu_0=0$.
If $h(x)=h(\ol x)$ for all $x\in \prt D_*$, where $\ol x$ denotes the complex conjugate of $x$, then $\wt h(r)=0$ for
$-1 < r < 1$. In this case, the integral in \eqref{sept6.1} is finite
for $x=1$ if and only if
\begin{align}\label{sept6.4}
\int_0^1\frac{1}{(1-r)h(r)} dr < \infty.
\end{align}
\end{remark}

\medskip
Condition \eqref{sept6.1} can be restated. Set
$f= \Re (1/(h+i \wt h -i \mu_0/\pi))$. Then $f$ is harmonic and
positive, so there is a positive measure $d\sigma$ such that
\begin{align*}
f(z)=\int_{\prt D_*} \frac{1-|z|^2}{|w-z|^2} d\sigma(w).
\end{align*}

\medskip
\begin{proposition}\label{sept11.1}
Condition \eqref{sept6.1} holds for $x\in \partial D_*$ if and only if
\begin{align}\label{sept11.2}
\int_{\prt D_*} \frac{1}{|w-x|}  d\sigma(w) < \infty.
\end{align}
\end{proposition}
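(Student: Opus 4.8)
The plan is to reduce condition \eqref{sept6.1} to a statement about the positive harmonic function $f$, and then to carry out a Tonelli computation together with an elementary two-sided estimate of a scalar integral. First I would observe that, by \eqref{ju20.4} together with $\mu_0=\tan\theta(0)$,
\begin{align*}
h+i\wt h-i\mu_0/\pi=\frac{e^{-i(\theta+i\wt\theta)}}{\pi\cos\theta(0)}=\frac{e^{\wt\theta-i\theta}}{\pi\cos\theta(0)},
\end{align*}
so that $1/(h+i\wt h-i\mu_0/\pi)=\pi\cos\theta(0)\,e^{-\wt\theta}(\cos\theta+i\sin\theta)$ and hence $f=\Re\bigl(1/(h+i\wt h-i\mu_0/\pi)\bigr)=\pi\cos\theta(0)\,e^{-\wt\theta}\cos\theta$. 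Thus the integrand in \eqref{sept6.1} equals $f(rx)/(\pi\cos\theta(0))$, and since $\pi\cos\theta(0)$ is a positive constant, \eqref{sept6.1} is equivalent to $\int_0^1 f(rx)\,(1-r)^{-1}\,dr<\infty$. Note also that $\sigma$ is a finite measure, since $\sigma(\prt D_*)=f(0)=\pi\cos^2\theta(0)$.

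Next I would substitute $f(rx)=\int_{\prt D_*}\tfrac{1-r^2}{|w-rx|^2}\,d\sigma(w)$ into this integral and apply Tonelli's theorem (all integrands are nonnegative), using $\tfrac{1-r^2}{1-r}=1+r$, to get
\begin{align*}
\int_0^1 \frac{f(rx)}{1-r}\,dr=\int_{\prt D_*} I(w)\,d\sigma(w),\qquad I(w):=\int_0^1 \frac{1+r}{|w-rx|^2}\,dr.
\end{align*}
The heart of the argument is then to show that $I(w)$ is comparable, up to multiplicative constants not depending on $w$, to $1/|w-x|$ for $w\in\prt D_*$. For this I would use the identity $|w-rx|^2=(1-r)^2+r|w-x|^2$, valid because $|w|=|x|=1$; writing $t:=|w-x|\in(0,2]$ and splitting the integral at $r=1/2$, the contribution of $[0,1/2]$ is bounded above and below by absolute constants, while the contribution of $[1/2,1]$ is comparable to $\int_0^{1/2}(s^2+t^2)^{-1}\,ds=t^{-1}\arctan(1/(2t))$, which is in turn comparable to $t^{-1}$ because $0<t\le 2$. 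Hence $I(w)$ is comparable to $1+t^{-1}$, which (again since $t\le 2$) is comparable to $t^{-1}=1/|w-x|$.

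Combining the two displays, $\int_0^1 f(rx)(1-r)^{-1}\,dr$ is finite if and only if $\int_{\prt D_*}|w-x|^{-1}\,d\sigma(w)$ is finite, which is exactly \eqref{sept11.2}; this is the claim. I expect the only step requiring genuine care to be the two-sided bound on the scalar integral $I(w)$, and in particular the bookkeeping that keeps the comparison constants uniform both as $w\to x$ (where $I(w)$ blows up) and as $w$ stays away from $x$ (where, thanks to $t\le 2$, the constant $1$ is still comparable to $1/t$); everything else is routine. Since $\sigma(\prt D_*)<\infty$, the bounded part of $I(w)$ contributes a finite amount in any case, so whether \eqref{sept6.1} holds is governed entirely by how much mass $\sigma$ places near $x$, which is precisely what \eqref{sept11.2} measures.
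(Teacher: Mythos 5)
Your proof is correct and follows essentially the same route as the paper: rewrite the integrand in \eqref{sept6.1} as $f(rx)/(\pi\cos\theta(0))$ with $f=\Re\bigl(1/(h+i\wt h-i\mu_0/\pi)\bigr)$, insert the Poisson representation of $f$, apply Tonelli, and show the inner radial integral is comparable to $1/|w-x|$. The only difference is cosmetic: the paper evaluates $\int_0^1 (1+r)|w-rx|^{-2}\,dr$ in closed form via partial fractions and reads off the asymptotics, whereas you get the two-sided bound elementarily from the identity $|w-rx|^2=(1-r)^2+r|w-x|^2$; both yield the same comparison and conclusion.
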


For example, suppose $E$ is a closed subset of $\prt D_*$ of positive
length. Let $f(y)=\dist(y,E)^p$ for $y \in \partial D_*$,
where $p \in(0,1)$ is fixed. Then it is not
hard to verify that $f\in
C^p(\prt D_*)$, that is, $f$ is H\"older-continuous with exponent $p$ on $\partial D_*$.
Let the harmonic extension of $f$ to $D_*$ be also denoted by $f$.
Thus the function $f+i \wt
f$ is analytic on $D_*$, extends to be continuous on the closed disk $\ol D_*$, and hence
the zero set $Z=\{y\in\prt D_*: f(y)=\wt f(y)=0\}\subset E$ has zero length
(see \cite[page 51]{Hoff}). Set $h+i \wt h = 1/(f+i\wt f)$. Then $h$
is positive and harmonic on $D_*$. Since $f\in
C^p(\prt D_*)$,   $\widetilde f \in C^p(\prt D_*)$
by Theorem II.3.2 in \cite{GarMar}. Thus $h=f/(f^2+\widetilde{f}^2)$
is continuous up to $\prt D_*\setminus Z$, and so $h$ tends to $0$ as
$z\to E\setminus Z$.
The function $h$ tends to a positive number at each point of
$\prt D_*\setminus E$.  The positive measure $ \sigma (dy)= f(y) |dy|$ on $\partial D_*$
satisfies \eqref{sept11.2} for each $x\in E$,
since $f(y)\le |x-y|^p$ for every $x\in E$.
Let $\theta \in \calT \leftrightarrow (h,0)\in \calH$ and $X\lra (h, 0)$
be the corresponding ORBM.
By Theorem \ref{j15.5} (vii) and Proposition \ref{sept11.1},
for every $x\in E$, $x \in \Gamma^\theta_X$ with probability $1$. Note also that the integral in
\eqref{sept11.2} is
infinite for each point $x\in \prt D_*\setminus E$, since $f$ is
positive and continuous there. So for every $x \in \prt
D_*\setminus E$, this ORBM does not hit $x$ with probability 1.
The function $\theta$ is continuous on $\prt D_*\setminus
Z$, and $|\theta| < \pi/2$ off $E$.  We can take $E$ to have no
interior in $\prt D_*$, so   $|\theta| <\pi/2$ on a dense open
set.

\medskip
Recall that if $f:D_*\to D_*$ is a conformal map of $D_*$ onto itself, then there exist $\theta_0\in [0, 2\pi)$ and $w_0\in  D_*$ such
that $f(z)=e^{i\theta_0} \frac{z-w_0}{1-\overline {w_0}z}$. So in particular $f$ extends continuously to $\ol D_*$
as a smooth homeomorphism. The following result establishes conformal invariance of ORBM on the unit disk.

\medskip

 \begin{theorem}\label{T:3.8}
Suppose   $\theta\in \calT$ and $X$ is an ORBM on $D_*$ with reflection angle $\theta$.
Suppose  $f: D_* \to D_*$ is a conformal map of $D_*$ onto $D_*$.
Define  for  $ t\in[0,\infty)$,
\begin{align}\label{e:3.17}
c(t)  = \int_0^t |f'(X_s)|^2 ds  \quad \hbox{ and } \quad Y_t  = f(X_{c^{-1}(t)}) .
\end{align}
Then $Y$ is an ORBM on $D_*$ with reflection angle $\theta \circ f^{-1}\in \calT$.
Equivalently, if $(h, \mu_0)  \in \calH  \leftrightarrow \theta$, then $Y$ is the
ORBM on $D_*$ parametrized by $(\bar h,  \bar \mu_0)  \in \calH$,  where
$\bar h (z)= h (f^{-1}(z))/(\pi h (f^{-1}(0)))$
and $\bar \mu_0= \mu(f^{-1}(z))/h (f^{-1}(0))$.
Here $\mu (w)$ is the harmonic function defined by  \eqref{aug28.2}.
\end{theorem}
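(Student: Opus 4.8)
The plan is to reduce to the $C^2$ case of Theorem~\ref{j25.1} and then pass to the limit using Theorem~\ref{j15.5} and Lemma~\ref{ju21.1}. By Theorem~\ref{j15.5} fix $C^2$ angles $\theta_k\to\theta$ in weak-$*$ topology, let $X^k$ solve \eqref{n19.1} with $X^k_0=z_k\to z_0$, so that $X^k\to X$ weakly in $M^\calT_1$ topology, and set $c_k(t)=\int_0^t|f'(X^k_s)|^2\,ds$ and $Y^k_t=f(X^k_{c_k^{-1}(t)})$. The first step is to show that each $Y^k$ is an ORBM on $D_*$ with the $C^2$ reflection angle $\theta_k\circ f^{-1}$; the second step is to identify the limit of $Y^k$ simultaneously as $Y$ and as the ORBM with angle $\theta\circ f^{-1}$, and then to read off the parameters $(\bar h,\bar\mu_0)$.

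For the first step, since the components of $f$ are harmonic, It\^o's formula has no second-order term and yields $df(X^k_t)=f'(X^k_t)\,dB_t+f'(X^k_t)\bv_{\theta_k}(X^k_t)\,dL^k_t$. The local martingale $t\mapsto\int_0^t f'(X^k_s)\,dB_s$ has quadratic covariation matrix $|f'(X^k_s)|^2 I\,ds$, so by the Dambis--Dubins--Schwarz theorem its time change by $c_k^{-1}$ is a standard planar Brownian motion. For the reflection term one uses that a conformal map of the plane is angle preserving: for $x\in\prt D_*$, the conformality of $f$ together with the fact that $f$ maps $D_*$ into $D_*$ gives $f'(x)\n(x)=|f'(x)|\n(f(x))$ and $f'(x)\bt(x)=|f'(x)|\bt(f(x))$, hence $f'(x)\bv_{\theta_k}(x)=|f'(x)|\,\bv_{\theta_k\circ f^{-1}}(f(x))$. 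Setting $\wt L^k_t=\int_0^{c_k^{-1}(t)}|f'(X^k_s)|\,dL^k_s$, which increases only when $Y^k_t\in\prt D_*$, one obtains that $(Y^k,\wt L^k)$ solves the Skorokhod equation \eqref{j13.1} on $D_*$ for the $C^2$ angle $\theta_k\circ f^{-1}$; by uniqueness (Theorem~\ref{j25.1}(i)--(ii)), $Y^k$ has the law of the ORBM on $D_*$ with that reflection angle.

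For the second step, a change of variables on $\prt D_*$ (arc length transforms by $|f'|$, which is bounded above and below since $f$ is M\"obius) shows $\theta_k\circ f^{-1}\to\theta\circ f^{-1}$ in weak-$*$ topology, so by Theorem~\ref{j15.5}(viii) the ORBMs with angles $\theta_k\circ f^{-1}$ converge weakly in $M^\calT_1$ topology to the ORBM with angle $\theta\circ f^{-1}$ started at $f(z_0)$ --- here one checks in addition that $f$ carries the arcs $\{\theta_k=\pm\pi/2\}$ onto $\{\theta_k\circ f^{-1}=\pm\pi/2\}$, so that the image under $f$ of a path in $\calA_{\theta_k}$ lies in $\calA_{\theta_k\circ f^{-1}}$. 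Separately, one shows that the map $\Phi\colon x\mapsto\bigl(t\mapsto f(x_{c_x^{-1}(t)})\bigr)$, with $c_x(t)=\int_0^t|f'(x_s)|^2\,ds$, is continuous from $(\calA_\calT,M^\calT_1)$ to itself: $M_1$-convergence $x^k\to x$ forces convergence Lebesgue-a.e.\ in time, whence $c_{x^k}\to c_x$ and $c_{x^k}^{-1}\to c_x^{-1}$ uniformly on compact sets (all of these time changes being bi-Lipschitz with uniform constants), and composing a uniformly convergent parametric representation of the graph with $f$ and with the time change preserves uniform convergence. The continuous mapping theorem applied to $X^k\to X$ then gives $Y^k=\Phi(X^k)\to\Phi(X)=Y$ weakly, and comparing the two limits shows that $Y$ has the law of the ORBM on $D_*$ with reflection angle $\theta\circ f^{-1}$.

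Finally, since the harmonic extension of the boundary function $\theta\circ f^{-1}$ equals the composition of the harmonic extension of $\theta$ with $f^{-1}$ (non-tangential limits being preserved by the smooth homeomorphism $f$), and since $\|h\circ f^{-1}\|_1=\pi\,h(f^{-1}(0))$, Lemma~\ref{ju21.1} with $\phi=f^{-1}$ identifies $(\bar h,\bar\mu_0)$ as in the statement. I expect the main obstacle to be the continuity of $\Phi$ in the $M^\calT_1$ topology --- controlling the interplay between the path-dependent time change and the parametric representations of the graphs near jump times, and matching the $\calA_\theta$ and $\calA_{\theta\circ f^{-1}}$ jump structures under $f$ --- whereas the $C^2$ Skorokhod computation and the weak-$*$ convergence of the angles are comparatively routine. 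An alternative route, avoiding the approximation, would characterize $Y$ through an exit system directly: the part of $Y$ before hitting $\prt D_*$ is Brownian motion killed on $\prt D_*$ by the classical conformal invariance of planar Brownian motion, and the excursion laws together with their Poisson-kernel occupation densities from Section~\ref{sec:exc} transform correctly under $f$; but the approximation argument reuses the machinery already developed for Theorem~\ref{j15.5} most economically.
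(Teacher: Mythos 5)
Your proposal is correct and follows essentially the same route as the paper: approximate $\theta$ by $C^2$ angles $\theta_k$, use the smooth-case conformal invariance (It\^o formula plus uniqueness for the Skorokhod equation) to see that $f(X^k_{c_k^{-1}(\cdot)})$ is the ORBM with angle $\theta_k\circ f^{-1}$, prove convergence of the clocks and of the time-changed images in the $M_1^{\calT}$ topology, identify the limit via Theorem \ref{j15.5} applied to the weak-$*$ convergent angles $\theta_k\circ f^{-1}\to\theta\circ f^{-1}$, and read off $(\bar h,\bar\mu_0)$ from Lemma \ref{ju21.1}. The only (harmless) deviation is that you obtain $c_k\to c$ by a deterministic bounded-convergence argument, exploiting that $|f'|$ is bounded above and below for a M\"obius self-map of $D_*$, whereas the paper invokes the stationary-measure estimate from the proof of Theorem \ref{j17.3}.
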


\medskip

\subsection{Excursion Reflected Brownian Motions}
\label{subs-main3}

We now address the question that was left unanswered in Section \ref{subs-main2},
namely whether there  exists a process on $D_*$ associated with a purely 
tangential angle of reflection, e.g.,
$\theta \equiv \pi/2$.  In Theorem \ref{j18.1} we will show that such a  
process does indeed exist and can be obtained as a suitable limit of
ORBMs in $D_*$ corresponding to angles of reflection $\theta \in
\calT$.   We refer to this process as excursion reflected Brownian
motion (ERBM).

We will first define ERBM more generally, in  a bounded simply
connected domain $D$
with variable excursion intensity $\nu(dx)$, where $\nu$ is a measure
on $\prt D$. Our construction resembles a process introduced in
\cite{FT,CFY,CFdarn} and called ``Brownian motion extended by
darning'' (BMD), and defined simultaneously in \cite{Lawler} under the
name of ERBM.  
We will use some concepts from excursion theory reviewed in Section \ref{sec:exc}.

\begin{definition}\label{jan22.1}
Suppose that $\nu(dx)$ is a finite positive measure on $\prt D$. Let $H^x$ be the standard Brownian excursion law in $D$ for excursions starting at $x\in \prt D$. If $D=D_*$ then we normalize the $\sigma$-finite measures $H^x$, $x\in \prt D_*$, so that all of them can be obtained from $H^1$ by rotation around 0. Let $\bDelta$ be a cemetery state and $\calC= \calC_D$ denote the family of all functions
$\omega:[0,\infty)\to \ol D\cup\{\bDelta\}$ such that
$\omega(0)\in \prt D$, $\omega$ is continuous up to its lifetime $\zeta<\infty$, and $\omega(t) = \bDelta$ for $t\geq \zeta$.
Let $\lambda$ denote the Lebesgue measure on $\R_+=[0,\infty)$
and let $\mathcal P$ be the Poisson point process on $\R_+\times \calC$ with characteristic measure $\lambda\times \int _{\prt D} H^x \nu(dx) $. With
probability 1, there are no two points with the same first
coordinate so the elements of $\mathcal P$ may be
unambiguously denoted by $(t,\exc_t)$. Let
\begin{equation*}
  \zeta_t=\inf\{s>0:\exc_t(s)=\bDelta\}.
\end{equation*}
Let $\sigma_v = \sum_{s\leq v}\zeta_s$ and $\sigma_{v-} = \sum_{u< v}\zeta_u$ for $v\geq 0$.

Let $D^\prt:=D\cup \{ \partial \}$ be a one-point compactification
of $D$ obtained by identifying the usual boundary $\prt D$ with a single point $\prt$.

If $D = D_*$ then
the lifetimes of excursions of the process $\calP$ have the same structure as those of the symmetric reflected Brownian motion (with the normal reflection), so $\sigma_v < \infty$ for all $v<\infty$ and $\lim_{v\to\infty} \sigma_v = \infty$, a.s. For all domains $D$ for which the last two statements are true,
with probability 1, for every $t\geq 0$, the formula $r = \inf \left\{ v\geq 0: \sigma_v \geq t\right\}$ defines a unique $r\geq 0$.
For $t\geq 0$ let
\begin{equation*}
  X_t=
  \begin{cases}
    \exc_r(t - \sigma_{r-}),&\text{if $\sigma_{r-} < \sigma_r$ and
    $t \in [\sigma_{r-} , \sigma_r)$},\\
    \prt,&\text{otherwise}.
  \end{cases}
\end{equation*}
 With probability one, $X$ is a conservative process taking values in $ D^\prt$.
We will call the process $X$ (or its distribution) excursion reflected Brownian motion (ERBM) in $D$ with excursion intensity $\nu$.
In general, $X$ is not a Hunt process on $\ol D$ as it does not have the quasi-left  continuity property at the first hitting time of $\partial D$,
which is a predictable stopping time. However,  $X$ is a conservative continuous Hunt process on $D^\prt$.
\end{definition}

\begin{remark} \label{j3.1}
(i)
If $H^x$ is a standard Brownian excursion law in $D$ and $c>0$ is a constant then $cH^x$ is also a standard Brownian excursion law in $D$. We talked about ``the'' standard excursion laws above because all standard excursion laws in a simply connected domain corresponding to a given boundary point are constant multiples of each other.

(ii) For any strictly positive function $a(x)$ on the boundary of $D$, ERBM corresponding to $(a(x)\nu(dx), (1/a(x))H^x)_{x\in \prt D}$ has the same distribution as ERBM determined by $(\nu(dx), H^x)_{x\in \prt D}$.
Hence, one has to specify both $\nu$ and the normalization of the excursion laws $H^x$ to identify ERBM uniquely.

(iii) It may be surprising at the first sight but it is easy to see that for any constant $c>0$, $(\nu(dx), H^x)_{x\in \prt D}$ and $(c\nu(dx), H^x)_{x\in \prt D}$ define the same ERBM. So we may assume that $\nu$ is a probability measure.

(iv) Combining the last two remarks, it is easy to check that if ERBM $X$ can be represented by $(\nu(dx),  H^x)_{x\in \prt D}$ and also by $( \nu_1(dx),  H_1^x)_{x\in \prt D}$,  then
$$
( \nu_1(dx), H_1^x)_{x\in \prt D}\equiv (c a(x)\nu(dx), (1/a(x)) H^x)_{x\in \prt D}
$$
 for some positive function $a(x)$ and some positive constant $c$.

(v) When $D$ is the unit ball $D_*$,  the ERBM in $D_*$ with excursion intensity $\nu$ being the uniform measure on $\prt D_*$
has the same distribution as the BMD  studied in \cite{FT,CFY,CFdarn}; see \cite[Remark 7.6.4]{CF1} where this identification is proved
when $D$ is the exterior of the unit ball. When $D$ is the exterior of the unit ball,
the process  also has the same distribution as the ERBM introduced in
\cite{Lawler}; see \cite[Example 6.3]{CF2}.

(vi)
To make things simple, we will assume in theorems on ERBM that $\prt
D$ is a Jordan curve (in other words, $D$ is a simply connected Jordan
domain). This is equivalent to saying that if $f: D_* \to D$ is a
one-to-one and onto analytic mapping then $f$ can be extended to be continuous
and one-to-one on $\ol D_*$.
We believe that all our results hold for arbitrary
bounded simply connected domains because ``exotic'' points on the boundary are negligible from the point of view of excursion theory.

(vii) The reader who wishes to learn more about potential theoretic properties of domains and their relationship to geometric properties may consult \cite{Ohtsuka} for a discussion of ``prime ends.'' The Martin boundary is presented in \cite{Doob84}; in particular, the identification of the Martin boundary and prime ends  is mentioned in \cite[1 XII 3]{Doob84}.
The Martin topology and boundary in simply connected planar domains are conformally invariant, see \cite[Thm. 9.6]{Pom}.

(viii)
If $D$ is a Jordan domain and $x\in \prt D$, then the Martin kernel $K_x(\,\cdot\,)$ is the unique, up to a multiplicative constant, positive harmonic function in $D$ that vanishes everywhere on the boundary except at $x$.
The density of the expected occupation measure for $H^x$ is a constant multiple of the Martin kernel $K_x(\,\cdot\,)$ by \cite[Prop. 3.4]{BurBook}.
\end{remark}

\begin{proposition}\label{P:3.10}
Suppose  $D\subset \C$ is a bounded simply connected Jordan domain.

\begin{enumerate}[\rm (i)]
\item
Let $ X$ be an ERBM constructed from $(\nu, H^x)_{x\in D}$, where $\nu$ is a probability measure on $\partial D$.
Then $X$ has a  unique stationary distribution whose density is proportional to
$h(y) = \int _{\prt D} K_x(y) \nu(dx) $.
\item
For every positive harmonic function $h$ in $D$ with $\|h\|_{L^1(D)}=1$ there exists
an ERBM $X$ with the stationary density $h$.
\end{enumerate}
\end{proposition}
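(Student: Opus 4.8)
The plan is to reduce part (ii) to part (i) and then to prove part (i) in three steps. Part (ii) is immediate: since $D$ is a Jordan domain, its Martin boundary is $\partial D$ (Remark \ref{j3.1}(vii)), so a positive harmonic function $h$ on $D$ has a Martin representation $h=\int_{\partial D}K_x\,\sigma(dx)$ for a unique finite positive measure $\sigma$; normalizing $\sigma$ to a probability measure $\nu$ (legitimate by Remark \ref{j3.1}(iii)) and applying part (i) yields an ERBM whose stationary density is proportional to $\int_{\partial D}K_x\,\nu(dx)$, hence to $h$, and equal to $h$ once $\|h\|_{L^1(D)}=1$. For part (i) the three steps are: identify the candidate stationary measure via excursion theory, prove uniqueness by an ergodic argument, and verify stationarity by a resolvent computation that uses the Poisson point process structure of the excursions.

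\textbf{Steps 1 and 2 of part (i).} By Definition \ref{jan22.1} the excursions of $X$ form a Poisson point process on $\R_+\times\calC$ with intensity $\lambda\times\int_{\partial D}H^x\,\nu(dx)$, and by Remark \ref{j3.1}(viii) (cf. \eqref{jan17.1}) the expected occupation density of a single excursion under $H^x$ is a constant multiple of $K_x$. Campbell's formula then gives that over the local-time interval $[0,v]$ the expected occupation measure of a Borel set $A\subset D$ is $v\int_A h(y)\,dy$ (up to the fixed normalization of Remark \ref{j3.1}(i)--(iv)), while $\E[\sigma_v]=v\int_D h(y)\,dy=v\|h\|_{L^1(D)}$, which is finite because $D$ is bounded ($\int_D K_x(y)\,dy=H^x(\zeta)$ is the mean excursion lifetime). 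Thus the candidate is $m:=h(y)\,dy/\|h\|_{L^1(D)}$. For uniqueness: since $\sigma_v$ is a subordinator and the occupation additive functional also accrues independent increments over $v$, the strong law gives $\frac1t\int_0^t\bone_A(X_s)\,ds\to m(A)$ $\P_y$-a.s.\ for every starting point $y$; applying this under any stationary law $\pi$ and using dominated convergence forces $\pi=m$, so the stationary distribution, if it exists, is unique and equals $m$.

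\textbf{Step 3 of part (i) (stationarity).} It suffices, since $X$ is conservative, to show $\alpha\int_D R_\alpha f\cdot h\,dy=\int_D f\cdot h\,dy$ for all $\alpha>0$ and $f\in C_c(D)$. Conditioning at the first hitting time $\tau$ of $\partial D$ gives $R_\alpha f(y)=R^D_\alpha f(y)+\E_y[e^{-\alpha\tau}]\,R_\alpha f(\partial)$, where $R^D_\alpha$ is the resolvent of Brownian motion killed on exiting $D$ and, by the strong Markov property of $X$ at $\tau$ together with the fact that the gluing in Definition \ref{jan22.1} forgets the hitting point, $R_\alpha f(\partial)$ is independent of that hitting point. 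Let $P^\alpha_x$ denote the density of the $H^x$-expected discounted occupation measure $H^x(\int_0^\infty e^{-\alpha s}\bone_{\,\cdot}(X_s)\,ds)$, so that $P^0_x=K_x$ by \eqref{jan17.1}/Remark \ref{j3.1}(viii) and $P^\alpha_x=K_x-\alpha R^D_\alpha K_x$ by the resolvent identity, and put $P_\alpha:=\int_{\partial D}P^\alpha_x\,\nu(dx)$. Using self-adjointness of $R^D_\alpha$ with respect to Lebesgue measure and the resulting identity $\alpha R^D_\alpha h=h-P_\alpha$, the first term contributes $\int_D fh\,dy-\int_D fP_\alpha\,dy$. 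For the second term I will compute $R_\alpha f(\partial)$ from the regenerative structure of the excursions issuing from $\partial$ (slicing the discounted occupation by excursions and applying Campbell's formula), obtaining $R_\alpha f(\partial)=\bigl(\int_D fP_\alpha\,dy\bigr)/\Phi(\alpha)$ with $\Phi(\alpha)=\int_{\partial D}H^x(1-e^{-\alpha\zeta})\,\nu(dx)$, the numerator being $\int_{\partial D}H^x(\int_0^\infty e^{-\alpha s}f(X_s)\,ds)\,\nu(dx)=\int_D fP_\alpha\,dy$. Everything then collapses to the pointwise balance identity $H^x(1-e^{-\alpha\zeta})=\alpha\int_D K_x(y)\,\E_y[e^{-\alpha\tau}]\,dy$, and both sides equal $\alpha\int_D(K_x-\alpha R^D_\alpha K_x)\,dy$: the left side because $1-e^{-\alpha\zeta}=\alpha\int_0^\zeta e^{-\alpha s}\,ds$ and $H^x(\int_0^\infty e^{-\alpha s}\bone_D(X_s)\,ds)=\int_D P^\alpha_x\,dy$, the right side because $\E_y[e^{-\alpha\tau}]=1-\alpha R^D_\alpha 1(y)$ and $R^D_\alpha$ is self-adjoint. (Alternatively one can argue softly: the time-averaged laws $\frac1T\int_0^T\P_y(X_t\in\,\cdot\,)\,dt$ converge to $m$ by Step 2, and a Feller property of the ERBM semigroup on $D^\partial$ --- which follows from the interior Feller property of Brownian motion and from $\P_y(\tau>t)\to0$ as $y\to\partial D$ --- then makes $m$ stationary.)

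\textbf{Main obstacle.} The delicate point is that the positive harmonic function $h$ need not be the Poisson integral of its boundary values; its Martin measure $\nu$ may have a singular part, as the example at \eqref{jul12.1} and Remark \ref{j15.10}(vi) show. Every step that would naively involve boundary values of $h$ --- the Dynkin-type identity for $R^D_\alpha h$, the evaluation of $\E_y[h(B_\tau)]$, and the like --- must instead be carried out through the Martin representation $h=\int K_x\,\nu(dx)$ and the reharmonized kernels $P^\alpha_x=K_x-\alpha R^D_\alpha K_x$, with the occupation-density identity \eqref{jan17.1}/Remark \ref{j3.1}(viii) doing the essential work; this is exactly where the Jordan-domain and Martin-boundary hypotheses are used. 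A secondary technical point is the legitimacy of the Campbell-formula manipulations for the excursion Poisson process, which rests on $\int_{\partial D}H^x(\zeta)\,\nu(dx)=\|h\|_{L^1(D)}<\infty$ (guaranteed by boundedness of $D$) so that the clock $\sigma_v$ is a genuine subordinator with finite mean.
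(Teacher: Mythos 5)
Your argument rests on the same central fact as the paper's proof of Proposition \ref{P:3.10}: the expected occupation density of a single excursion under $H^x$ is (a constant multiple of) the Martin kernel $K_x$ (\eqref{jan17.1}, Remark \ref{j3.1}(viii)), so the ERBM, being a Poissonian mixture of excursion laws with mixing measure $\nu$, occupies $D$ according to $\int_{\prt D}K_x(\cdot)\,\nu(dx)$; and your part (ii) is exactly the paper's (Martin representation of $h$ plus part (i)). The difference is in execution. The paper first reduces to $D=D_*$ by the Riemann mapping theorem and then simply asserts that the mixture structure ``easily implies'' the claim, leaving both uniqueness and the actual verification of stationarity implicit. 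You instead work directly in the Jordan domain through the Martin kernel and supply those two verifications: uniqueness via the strong law for the regenerative structure (independent increments of $\sigma_v$ and of the occupation functional over the local-time scale), and stationarity via the resolvent computation built on $P^\alpha_x=K_x-\alpha R^D_\alpha K_x$, the compensation formula for the excursion point process, and the balance identity $H^x(1-e^{-\alpha\zeta})=\alpha\int_D K_x(y)\E_y[e^{-\alpha\tau}]\,dy$. I checked these computations; they use only the entrance-law property of $H^x$ and the symmetry of killed Brownian motion, and they are correct. What your route buys is a complete argument in place of a two-line sketch, and it avoids having to justify that the Riemann-mapping transfer of stationary measures from $D_*$ to $D$ behaves as claimed (a point the paper's reduction quietly relies on).

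One assertion is wrong as stated and needs repair: you claim $\E[\sigma_v]=v\|h\|_{L^1(D)}<\infty$ ``because $D$ is bounded,'' i.e.\ that $H^x(\zeta)=\int_D K_x(y)\,dy$ is automatically finite for a bounded Jordan domain. It is not: the domain in the proof of Theorem \ref{j17.10}(iii), the image of $D_*$ under $\sqrt{1-z}$, is a bounded Jordan domain whose Martin kernel at the corner point behaves like $r^{-2}$ and is therefore not integrable. Thus $\|h\|_{L^1(D)}<\infty$ is a genuine hypothesis rather than a consequence of boundedness, and it is used at several load-bearing points of your argument (the normalization of the candidate $m$, the finite-mean strong law for $\sigma_v$, and the Fubini steps in the resolvent computation); without it there is no stationary probability distribution at all, only the $\sigma$-finite invariant measure $h\,dy$. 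This is a point the proposition itself glosses over---in $D_*$ integrability is automatic, which is why the paper's reduction hides it, but it reappears when transferring to $D$ (compare the hypothesis $\bar h\in L^1(D)$ in Theorem \ref{j15.7} and Example \ref{E:4.2}). In your write-up you should either assume $h\in L^1(D)$ explicitly in part (i) (it is implied by the hypothesis $\|h\|_{L^1(D)}=1$ in part (ii), so that part is unaffected) or weaken the conclusion to uniqueness of the invariant measure up to constant multiples.
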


We say that a real-valued function $f$ defined on a subset $S$ of $\R^n$ is Lipschitz with
constant $\lambda < \infty$ if $|f(x)-f(y)|\le \lambda|x-y|$ for all
$x,y \in S$. It follows from the definitions that a Lipschitz function
is Dini continuous.

\begin{theorem}\label{j18.1}
{\rm (i)}
Consider a sequence of $C^2$ functions $\theta_k: \prt D_* \to (-\pi/2, \pi/2)$ and let $X^k$ be defined by
\begin{align}\label{n24.1}
 X^k_t = x_k + B_t + \int_0^t  \bv_{\theta_k} (X^k_s) dL^k_s,
 \qquad \hbox{for } t\geq 0.
\end{align}
Let $(h_k,\mu_{0,k}) \leftrightarrow \theta_k$ as in Lemma \ref{ju20.1}.  We make the following assumptions:
\begin{enumerate}[\rm (a)]
\item
$\theta_k$ converge to $\pi/2$ almost everywhere.
\item
For some $c_1 > -\pi/2$ and all $x$ and $k$,
$\theta_k(x) \geq c_1$.
\item
There exist $\lambda < \infty$ and $c_2>0$ such that
$h_k$ restricted to $\prt D_*$ is Lipschitz with constant $\lambda$ for every $k$,
and $h_k(x)> c_2$ for every $x$ and $k$.
\item There is a finite measure $\nu(dx)$ on $\prt D_*$ such that
$h_k(x)dx \to \nu(dx)$ weakly as measures on $\prt D_*$, when $k\to \infty$.
\item
 $\lim_{k\to\infty}\dist(x_k, \prt D_*)= 0$.
\end{enumerate}
Then the processes
$X^k$ converge in the sense of finite dimensional distributions to
ERBM $X$
corresponding to $(\nu(dx), H^x)_{x\in \prt D_*}$, where
all $H^x$ are obtained from $H^1$ by rotation around 0.

\medskip  {\rm (ii)}
Conversely, suppose that $h$ is harmonic in $D_*$, Lipschitz on $\ol D_*$ and positive on
$\ol D_*$.
Then there exists a sequence of $C^2$ functions $\theta_k: \prt D_*
\to (-\pi/2, \pi/2)$ satisfying  conditions (a)-(e)
with  $\nu(dx)= h (x) dx$  on $\prt D_*$.
ORBMs $X^k$ corresponding to $\theta_k$'s converge in the sense of finite dimensional distributions to an ERBM $X$ with the stationary density $h$.
\end{theorem}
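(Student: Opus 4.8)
The approach I would take is excursion-theoretic: describe $X^k$ and the target ERBM as Brownian excursions away from $\prt D_*$ strung together by a boundary dynamics, and show that the boundary dynamics of $X^k$ ``homogenises'' as $\theta_k\to\pi/2$. For part (i), first exploit the skew-product structure of reflected Brownian motion in $D_*$. Writing $X^k_t=Y_t\,e^{i\Theta^k_t}$, equation \eqref{e:2.31} shows that $Y_t=|X^k_t|$ and its boundary local time $L^Y$ have a law not depending on $\theta_k$, and an It\^o computation gives $d\Theta^k_t=Y_t^{-1}\,d\wt W_t+\I_{\{Y_t=1\}}\tan\theta_k(e^{i\Theta^k_t})\,dL^Y_t$ with $\wt W$ a Brownian motion independent of $Y$. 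Reparametrising by $L^Y$, the boundary angle $\Psi^k_\ell:=\arg X^k_{\sigma_\ell}$ satisfies $\Psi^k_\ell=\Psi^k_0+C_\ell+\int_0^\ell\tan\theta_k(e^{i\Psi^k_r})\,dr$, where $C$ is a fixed (that is, $\theta$-independent) symmetric Cauchy-like L\'evy noise on the circle — it is the excursion-winding process, and encodes the Cauchy-like behaviour of $\arg X$ recalled in Section~\ref{subs-main1}. By the exit system formula \eqref{exitsyst}, applied as in Section~\ref{sec:exc} with the standard excursion laws $H^x$ normalised by rotation from $H^1$ (so \eqref{jan17.1} holds), the excursions of $X^k$ away from $\prt D_*$ form, on the (inverse-)local-time scale, a Poisson point process with intensity $d\ell\otimes H^{\,b^k_\ell}$, where $b^k_\ell:=e^{i\Psi^k_\ell}$. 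Thus the whole $k$-dependence sits in the one real-valued process $\Psi^k$, and part (i) reduces to showing that this excursion point process converges to the Poisson point process of Definition~\ref{jan22.1}, i.e.\ one with intensity $d\ell\otimes\int_{\prt D_*}H^x\,\nu(dx)$ (up to the harmless constant of Remark~\ref{j3.1}(iii)).

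Next I would extract the geometry of the boundary flow. From (c) and (d), $h_k$ converges uniformly on $\prt D_*$ to a Lipschitz, strictly positive limit $h_\nu$ with $\nu(dx)=h_\nu(x)\,|dx|$; hence the conjugates $\wt{h}_k$ are uniformly bounded (a uniformly Dini-continuous family has uniformly bounded conjugates, cf.\ \cite[Thm.~III.1.3]{Gar}). Since $\mu_{0,k}=\tan\theta_k(0)\to+\infty$ (because $\theta_k(0)=\int_{\prt D_*}\theta_k\,|dx|/(2\pi)\to\pi/2$ by dominated convergence), \eqref{aug27.3} gives $\mu_k=\mu_{0,k}-\pi\wt{h}_k>0$ for large $k$, so by \eqref{aug27.7} and $h_k>c_2$ we get $\tan\theta_k=\mu_k/(\pi h_k)>0$ everywhere. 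Therefore $\tan\theta_k$ generates on $\prt D_*$ a genuine winding flow, with period $T_k=\int_{\prt D_*}\cot\theta_k\,|dx|=\pi\int_{\prt D_*}h_k/\mu_k\,|dx|\to 0$ and one-revolution occupation density $\propto\cot\theta_k(x)=\pi h_k(x)/\mu_k(x)$; as $\mu_k$ is, uniformly on $\prt D_*$, asymptotic to the constant $\mu_{0,k}$, this density is $(1+o(1))\,h_k(x)$ after normalisation and hence converges to $\bar\nu:=\nu/\nu(\prt D_*)$ (the true invariant law of $\Psi^k$ converges to $\bar\nu$ as well, by the analogous Fokker--Planck computation for $C_\ell+\int\tan\theta_k\,d\ell$). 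This is where conditions (a)--(d) enter.

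The heart of the argument is then an averaging/law-of-large-numbers statement: over any fixed local-time window $\Psi^k$ makes $\asymp T_k^{-1}\to\infty$ revolutions of the circle, during each of which it sweeps out essentially the one-revolution occupation measure; the deviations of successive revolutions' occupation measures from $\bar\nu$ are asymptotically independent (being driven by the independent increments of the fixed noise $C$) and have vanishing mean; hence $(\ell_2-\ell_1)^{-1}\int_{\ell_1}^{\ell_2}\delta_{b^k_r}\,dr\to\bar\nu$ in probability for every (local-time-adapted) window, so $\int_{\ell_1}^{\ell_2}H^{\,b^k_r}\,dr\to(\ell_2-\ell_1)\int_{\prt D_*}H^x\,\bar\nu(dx)$. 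From this the excursion point process of $X^k$ converges to the Poisson point process defining the ERBM, and one reads off convergence of the finite-dimensional distributions of the strung-together paths — matching the size-biasing of the excursion straddling a given time, and using (e) to ensure the limit is the ERBM started from $\prt D_*$ rather than one preceded by a macroscopic interior piece. The main obstacle, I expect, will be making this averaging rigorous: handling the joint heavy-tailed randomness of $C$ and of the local-time clock $\sigma$, proving the ``law of large numbers over revolutions'' with genuinely vanishing fluctuations and uniformly in the window, and upgrading Poisson-point-process convergence to honest finite-dimensional convergence of the processes.

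For part (ii), after rescaling (harmless by Remark~\ref{j3.1}(iii)) we may assume $\pi h(0)=1$. Take $h_k$ to be the harmonic extension of a smooth mollification of $h|_{\prt D_*}$, normalised to value $1/\pi$ at $0$: then $h_k\in C^\infty(\ol D_*)$, $h_k\to h$ uniformly on $\ol D_*$, the boundary functions $h_k|_{\prt D_*}$ are Lipschitz with a common constant and bounded below by a common positive constant, and $h_k(x)\,|dx|\to h(x)\,|dx|$ weakly. Put $\mu_{0,k}=k$ and let $\theta_k\in\calT$ correspond to $(h_k,k)\in\calH$ as in Lemma~\ref{ju20.1}; by \eqref{aug27.6}, $\theta_k=-\arg(h_k+i\wt{h}_k-ik/\pi)$, which is $C^\infty$ on $\prt D_*$ (its argument has strictly positive real part $h_k$, so never vanishes) and, since $h_k+i\wt{h}_k$ stays bounded, tends to $\pi/2$ uniformly on $\ol D_*$. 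These $\theta_k$ satisfy (a), (c) and (d) with $\nu(dx)=h(x)\,dx$, and (b) (uniform convergence to $\pi/2$, the finitely many remaining $\theta_k$ having $|\theta_k|<\pi/2$ strictly); choosing starting points $x_k$ with $\dist(x_k,\prt D_*)\to0$ gives (e). Part (i) then yields finite-dimensional convergence of the $X^k$ to the ERBM with excursion intensity $h(x)\,dx$, whose stationary density is proportional to $h$ by the Poisson identity $\int_{\prt D_*}K_x(y)h(x)\,|dx|=2\pi h(y)$ — equivalently, by Proposition~\ref{P:3.10}(i).
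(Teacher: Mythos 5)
Your overall skeleton coincides with the paper's: you pass to the boundary angular process on the local-time scale, driven by a fixed Cauchy-type noise plus the drift $\tan\theta_k$ (this is exactly \eqref{j4.10} in the paper's proof), you identify the one-revolution occupation density $\cot\theta_k=\pi h_k/\mu_k\approx \pi h_k/\mu_{0,k}$ so that fast rotation should homogenise the excursion starting points to $\nu/2$, and your part (ii) (mollified boundary data, $\mu_{0,k}=k$) is the same device as the paper's dilation $h_k(z)=h((1-1/k)z)$. The problem is that the decisive step --- converting the fast-rotation heuristic into convergence of the law of the excursion starting points --- is precisely the step you leave as an acknowledged ``obstacle,'' and the route you sketch for it does not close. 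You treat the excursion process as conditionally Poisson with intensity $H^{b^k_\ell}\,d\ell$ given the boundary path $b^k$, and then average the intensity via an occupation-measure law of large numbers over many revolutions. But $b^k$ is itself a functional of those excursions: the jumps of your driving noise $C$ \emph{are} the angular displacements of the excursions, and the excursions that matter for the finite-dimensional distributions (those reaching $\ball(0,1-\eps)$) are exactly the ones producing the large jumps of $C$. So ``intensity averaging given $b^k$'' is circular, the claimed asymptotic independence of successive revolutions is not available as stated, and the heavy tails of $C$ make a window-wide LLN delicate; none of this is supplied.

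The paper avoids the window-wide averaging altogether. It splits the excursions at level $\eps$: the small ones drive a truncated noise $C^\eps$ (Lévy measure $\mu_\eps$), while the big ones arrive, on the local-time scale, after an exponential waiting time of mean $|\log(1-\eps)|$, independent of the intervening rotation, and once a big excursion hits $\prt\ball(0,1-\eps)$ its subsequent path is rotation-invariant and $\theta$-independent. Then a Gr\"onwall estimate \eqref{sept14.3}--\eqref{sept15.5}, using (b)--(d) to control $F^k$, shows that over a \emph{single} revolution (local time $2\pi/\mu_{0,k}$) the angle tracks the deterministic flow $a_t$ with $\dot a=1/(\pi h(a))$, uniformly in probability; since the exponential density is nearly constant on intervals of length $2\pi/\mu_{0,k}$ and $a$ maps uniform-in-$t$ to $\nu/2$ by \eqref{j5.5}, the conditional law of the next big-excursion starting point converges to $\nu/2$, giving the i.i.d. limit and hence the finite-dimensional convergence to the ERBM. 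Until you either adopt this small/big decomposition with the exponential-clock argument, or genuinely prove your revolution-wise LLN while accounting for the dependence between $C$ and the excursion marks, the core of part (i) remains unproved; part (ii) is fine modulo part (i).
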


\begin{remark}\label{jan15.1}
\begin{enumerate}[\rm (i)]
\item
The roles of $\pi/2$ and $-\pi/2$ in Theorem \ref{j18.1} can be
reversed by replacing $\theta_k(x)$ with $-\theta_k(\ol x)$. See
Remark \ref{ju20.5}(iv).

\item It is easy to see from Theorems \ref{T:3.8} and \ref{j18.1}
that if $f: D_*\to D_*$ is a conformal map and $X$ is an ERBM on $D_*$
corresponding to $(\nu(dx), H^x)_{x\in \prt D_*}$, then $f(X)$ is a time-change
of ERBM on $D_*$ corresponding to $(\nu (dx)\circ f^{-1}, H^x)_{x\in \prt D_*}$.

\item Suppose that there exists $\lambda < \infty$  such that
$h_k$ restricted to $\prt D_*$ is Lipschitz with constant $\lambda$ for every $k$.
Then it is elementary to show that there exists $c_2>0$ such that  $h_k(x)> c_2$ for every $x$ and $k$
if and only if there exists $\lambda_1 < \infty$  such that
$1/h_k$ restricted to $\prt D_*$ is Lipschitz with constant $\lambda_1$ for every $k$.
\end{enumerate}
\end{remark}

\begin{example}\label{m2.1}
Theorem \ref{j18.1} has many assumptions so it deserves a simple example to
illustrate it. Suppose $h(x)$ and $1/h(x)$ are positive Lipschitz
continuous functions on $\prt D_*$ with $\|h\|_{L^1(D_*)}=1$. Let $h(z)$ be the harmonic
extension of $h$ to $D_*$.
Suppose also that $\mu_{0,k}\to \infty$ as $k\to \infty$.
Then $(h,\mu_{0,k}) \leftrightarrow \theta_k \in \calT$ as in Theorem
\ref{aug27.0}.  If $h_k=h$ for all $k$
then  $(h_k,\mu_{0,k})$ and $\theta_k$ satisfy assumptions (a)-(e) of Theorem \ref{j18.1}.
\end{example}

\subsection{ORBMs in Simply Connected Domains}
\label{subs-main4}

We will use conformal mappings to construct  ORBMs  in arbitrary simply connected domains.
In the following, we will usually use $X$ to denote ORBM in the disk $D_*$ and $Y$ to denote ORBM in other domains.

\begin{theorem}\label{j15.7}
Suppose that $f$ is a one-to-one analytic function
mapping $D_*$ onto a simply connected domain $D\subset \C$. Suppose
that $\theta\in \calT$, $\theta \leftrightarrow (h,\mu)$, let
$\bar h = h \circ f^{-1}$ and assume that $\bar h$ is in
$L^1(D)$.  Let $X \lra \theta$ be ORBM in $D_*$ and define
\begin{align}\label{j18.4}
c(t) &= \int_0^t |f'(X_s)|^2 ds,  \qquad \text{  for  } t\geq 0, \\
\zeta &= \inf\{t\geq 0: c(t) = \infty\},\label{feb5.1}\\
Y_t &= f(X_{c^{-1}(t)}), \qquad \text{  for  } t\in[0,\zeta).\label{feb5.2}
\end{align}
We will call $Y$ an ORBM in $D$. The following hold.
\begin{enumerate}[\rm (i)]
\item
With probability 1, $\zeta = \infty$.

\item
The process $Y$ is an extension of killed Brownian motion in $D$ in the sense that for
every $t\geq 0$ and $\tau_t = \inf\{s\geq t: Y_s \in \prt D\}$, the process $\{Y_s, s\in [t, \tau_t)\}$ is
Brownian motion killed upon exiting $D$.

\item
The process $Y$ has a stationary distribution with   density   $\wh h  = \bar h/ \|\bar h\|_{L^1 (D)}$.

\item  Recall that $\mu$ is the function given by \eqref{aug28.2}.
For $z\in D$, let $\barg^* (Y_t-z) = \barg^* (X_{c^{-1}(t)}-f^{-1}(z))$ for all $t$. Then, for every $z\in D$, a.s.
\begin{align}\label{f6.2}
\lim_{t\to \infty} \frac{\barg^* (Y_t-z) }{t} &= \frac{\mu(f^{-1}(z))}{ \|\bar h\|_{L^1(D)}}.
\end{align}
\item
Suppose that $\mu_0\in \R$ and $\wh h$ is a positive harmonic function in $D$ with
$\|\wh h\|_{L^1(D)}=1$.
Then there exists an ORBM $Y$ in $D$ with the following properties.
\hfil\break {\rm (a)}  The stationary distribution of $Y$ is $\wh h(x) dx$.
\hfil\break {\rm (b)} Set $g = f^{-1}$ and define
\begin{align}\label{jan18.1}
b(t) &:= \int_0^t |(g'(Y_s)|^2 ds, \qquad t\geq 0,\\
X_t &:= g(Y_{b^{-1}(t)}), \qquad t\geq 0,\label{feb17.1} \\
\barg^* Y_t &:= \barg^* X_{b(t)}, \qquad t\geq 0.\label{feb17.2}
\end{align}
Since $\wh h \circ f$ is a positive harmonic function on $D_*$, ~$\|\wh
h \circ f\|_1 = \pi \wh{h} \circ f(0)< \infty$. Set $h_1 =\wh h \circ f /\|\wh h\circ f\|_1$ and let
$\mu\in \calR\leftrightarrow (h_1,\mu_0)\in \calH$.
Then $X$ is the ORBM in $D_*$ parametrized by $(h_1, \mu_0)$ and
\eqref{f6.2} holds with $\bar h=h_1\circ f^{-1}=\wh h/\|\wh h\circ f\|_1$.
\item  (Consistence)
If $D$ has a smooth boundary and $\theta$ is $C^2$
 then the distribution of $Y$ is the same as that of the process identified in Theorem \ref{j25.1} (ii) relative to $\theta\circ f^{-1}$.
\end{enumerate}
\end{theorem}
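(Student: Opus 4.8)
The process $Y$ is built from the disk ORBM $X\lra\theta$ (which exists by Theorem \ref{j15.5}) by the conformal change of space $z\mapsto f(z)$ followed by the associated random time change, so the plan is to transport the corresponding facts about $X$ through this operation, using the conformal invariance of planar Brownian motion. For (i), start $X$ from its stationary distribution $h(z)\,dz$ (Theorem \ref{j15.5}(iii)); since $X$ spends zero Lebesgue time on $\prt D_*$, $\E_h[c(t)] = t\int_{D_*}|f'(z)|^2 h(z)\,dz = t\,\|\bar h\|_{L^1(D)} < \infty$ by the change of variables $w=f(z)$, hence $c(t)<\infty$ for all $t$, $\P_h$-a.s., so $\P_h(\zeta=\infty)=1$; a Markov-property bootstrap, using that the law of $X_\eps$ ($\eps>0$) is absolutely continuous with respect to $h(z)\,dz$, then gives $\zeta=\infty$ from every starting point. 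For (ii), $|f'|>0$ on $D_*$ makes $c$ strictly increasing with $c^{-1}$ continuous, and $c^{-1}(t)$ is a stopping time for the filtration of $X$; applying the strong Markov property of $X$ at $c^{-1}(t)$ together with Theorem \ref{j15.5}(i), the piece of $X$ between $c^{-1}(t)$ and the next visit to $\prt D_*$ is Brownian motion in $D_*$ killed on exiting, and its conformally mapped, time-changed image is Brownian motion in $D$ killed on exiting, i.e.\ $\{Y_s:s\in[t,\tau_t)\}$.

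For (iii), under the time change the invariant measure $h(z)\,dz$ of $X$ is carried to $|f'(z)|^2 h(z)\,dz$, whose push-forward under $f$ is $\bar h(w)\,dw$ (change of variables), so $Y$ has stationary density $\wh h=\bar h/\|\bar h\|_{L^1(D)}$. For (iv), Theorem \ref{j15.5}(vi) gives $\barg^*(X_u-w)/u\to\mu(w)$ a.s.\ for $w=f^{-1}(z)\in D_*$, while the ergodic behaviour of $X$ gives $c(t)/t\to\|\bar h\|_{L^1(D)}$, hence $c^{-1}(t)/t\to1/\|\bar h\|_{L^1(D)}$; since $\barg^*(Y_t-z)$ is by definition $\barg^*(X_{c^{-1}(t)}-w)$, writing $\barg^*(Y_t-z)/t=\big(\barg^*(X_{c^{-1}(t)}-w)/c^{-1}(t)\big)\,\big(c^{-1}(t)/t\big)$ and letting $t\to\infty$ yields \eqref{f6.2}. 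For (v), given $\mu_0$ and a positive harmonic $\wh h$ on $D$ with $\|\wh h\|_{L^1(D)}=1$, the function $\wh h\circ f$ is positive and harmonic on $D_*$ with $\|\wh h\circ f\|_1=\pi(\wh h\circ f)(0)<\infty$, so $h_1:=(\wh h\circ f)/\|\wh h\circ f\|_1$ gives $(h_1,\mu_0)\in\calH$ and, via Theorem \ref{aug27.0}, a $\theta\in\calT$; taking $X\lra\theta$ and defining $Y$ by \eqref{j18.4}--\eqref{feb5.2} one has $\bar h=h_1\circ f^{-1}=\wh h/\|\wh h\circ f\|_1\in L^1(D)$, so parts (i)--(iv) apply and the stationary density of $Y$ is $\bar h/\|\bar h\|_{L^1(D)}=\wh h$, giving (a); for (b), the substitution $s=c(u)$ in $b(t)=\int_0^t|g'(Y_s)|^2\,ds$ shows $b=c^{-1}$, so $X_t=g(Y_{b^{-1}(t)})$ recovers the original $X$ and \eqref{feb17.2} is consistent with the definition of $\barg^*$.

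For (vi), assume $\prt D$ smooth and $\theta$ is $C^2$; then $f$ extends to a diffeomorphism of the closures, $\theta\circ f^{-1}$ is smooth, $X$ is continuous (and agrees with the process of Theorem \ref{j25.1}), and $Y$ is continuous. I would show that $\{\P_{Y_0}\}$ solves the submartingale problem \eqref{f3.1} in $D$ with angle $\theta\circ f^{-1}$. For $\phi\in C^2(\ol D)$ and $\psi=\phi\circ f$, the conformal invariance of the Laplacian gives $\Delta\psi(z)=|f'(z)|^2\,\Delta\phi(f(z))$, while on $\prt D_*$, using that $f$ preserves angles and maps the inward normal and the tangent at $x$ to $|f'(x)|$ times the inward normal and the tangent at $f(x)$, one gets $\frac{\prt}{\prt\n}\psi(x)+\tan\theta(x)\frac{\prt}{\prt\bt}\psi(x)=|f'(x)|\big(\frac{\prt}{\prt\n'}\phi+\tan(\theta\circ f^{-1})\frac{\prt}{\prt\bt'}\phi\big)(f(x))$, so $\psi\in\calC$ for $(D_*,\theta)$ iff $\phi\in\calC$ for $(D,\theta\circ f^{-1})$. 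Changing variables $s=c(u)$ then gives
\begin{align*}
\phi(Y_t)-\frac12\int_0^t\Delta\phi(Y_s)\,ds = \psi(X_{c^{-1}(t)})-\frac12\int_0^{c^{-1}(t)}\Delta\psi(X_u)\,du,
\end{align*}
whose right-hand side is the submartingale of the $D_*$-problem stopped at $c^{-1}(t)$. Since $|f'|$ is bounded away from $0$ on $\ol D_*$, $c^{-1}(t)\le t/\inf_{\ol D_*}|f'|^2$ is a bounded stopping time, so optional sampling makes the left-hand side a submartingale relative to $\mathcal G_t=\mathcal F_{c^{-1}(t)}$, which contains the natural filtration of $Y$. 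Thus $Y$ solves the submartingale problem for $(D,\theta\circ f^{-1})$, which by Theorem \ref{j25.1}(i)--(ii) has a unique solution equal in law to the Skorokhod-equation process; this is (vi).

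The main obstacle I anticipate is concentrated in (i) and in the time-change bookkeeping of (vi). In (i), extending $\zeta=\infty$ beyond the stationary start when $f$ may behave badly near $\prt D_*$ requires justifying the absolute continuity of the time-$\eps$ law of $X$ and a clean Markov decomposition of $c$. In (vi) one must carefully verify that $c^{-1}(t)$ is a bounded stopping time, that optional sampling applies, and that $\{\mathcal F_{c^{-1}(t)}\}$ dominates the filtration of $Y$; once the two conformal-invariance identities above are in hand, the remainder is routine. Everything else follows from Theorem \ref{j15.5}, the conformal invariance of Brownian motion, and change of variables.
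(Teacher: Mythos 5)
Your overall route is the same as the paper's: transport the disk results of Theorem \ref{j15.5} through the conformal map and the clock $c(t)$, with the stationary-start expectation computation for (i), the clock limit $c(t)/t\to\|\bar h\|_{L^1(D)}$ for (iv), and uniqueness of the submartingale problem (or It\^o's formula) for (vi). Two steps, however, do not go through as written. First, in (i) your passage from ``$\zeta=\infty$ for a.e.\ starting point'' to ``every starting point'' via the Markov property at a fixed time $\eps$ is circular: to shift by $\eps$ you must already know $c(\eps)<\infty$ $\P_y$-a.s.\ for the given $y$, and that is exactly the statement being proved on $[0,\eps]$ (the process may reach $\prt D_*$ before time $\eps$, where $|f'|$ can blow up); absolute continuity of the law of $X_\eps$ does not supply this. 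The paper's fix is to localize at the exit time $T$ of a small ball $\ball(x,r)$ with $\ol{\ball(x,r)}\subset D_*$, where $|f'|$ is bounded so $c(T)<\infty$ automatically, and then to use mutual absolute continuity of the exit (harmonic measure) distributions from points of the ball to transfer the full-probability statement; you should replace the fixed-time bootstrap by this argument.

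Second, in (iv) you take the general-$z$ disk statement \eqref{d11.2} of Theorem \ref{j15.5}(vi) as an input. Within the paper this is circular: the proof of \eqref{j16.1} (and hence of \eqref{d11.2}) is explicitly deferred to the proof of this very theorem, where it is obtained from \eqref{f6.2} in the special case $z=f(0)$ with $f$ a M\"obius self-map of $D_*$, using Lemma \ref{ju21.1}. The repair is what the paper does: first prove \eqref{f6.2} only for $z=f(0)$ from \eqref{j15.2}/\eqref{d11.1} together with $c(t)/t\to\|\bar h\|_{L^1(D)}$, then apply this with $f=\tau$ a M\"obius transformation of $D_*$ sending $0$ to $z$ (so that $\wh\theta=\theta\circ\tau\lra(\wh h,\wh\mu_0)$ as in Lemma \ref{ju21.1}) to deduce $\lim_{t\to\infty}\barg^*(X_t-z)/t=\mu(z)$, and only then conclude \eqref{f6.2} for general $z$. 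With these two corrections the rest of your argument is sound; your time-change identification of the invariant measure in (iii) and your submartingale-problem verification of (vi) are acceptable alternatives to the paper's occupation-time sketch and its It\^o-formula one-liner, provided you justify the standard fact that the time change by $\int_0^t|f'(X_s)|^2\,ds$ converts the invariant measure $h(z)\,dz$ into $|f'(z)|^2h(z)\,dz$.
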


\begin{remark}\label{jan17.3}
(i)
The quantity $\barg (Y_t-z)$ has a natural interpretation when $Y$ is continuous,
namely, $\barg (Y_t-z) - \barg (Y_0-z)$ is the number of windings of $Y$ around $z$ over the time interval $[0,t]$. The quantity $\barg^* (Y_t-z)$ is obtained from $\barg (Y_t-z)$ by
 discarding
(the windings of) all excursions of $Y$ which make more than a full loop around $z$ (from endpoint to endpoint of the excursion, not within the excursion).
Our definition of $\calE^L_s$ was chosen to make this simple geometric interpretation of $\barg^* (Y_t-z)$ possible.

Unfortunately, when $Y$ is not continuous, $\barg^* (Y_t-z)$ does not have a simple intuitive interpretation because the definition of $\barg$ in $D_*$ depends on $\theta$.

(ii)
The process $Y$ constructed in
Theorem \ref{j15.7}
will be called ORBM in $D$. The family of ORBMs in $D$ can be
parametrized either in terms of pairs $(\theta,f)$ or triplets $(\wh
h, \mu_0,f)$, so we will write $Y \lra (\theta,f)$ or $Y \lra (\wh h,
\mu_0,f)$.
The function $f$ provides a way to parametrize $\prt D$, in a sense.

(iii) If $\mu\in \calR \leftrightarrow (h,\mu_0)\in \calH$ then we say
that $\mu\circ f^{-1}(z)$
is the rotation rate about $z\in D$ for the process $Y$
given by \eqref{feb5.2}. If $\mu_1$ is a harmonic function defined on
$D$, let $\wt \mu_1$ be the
harmonic conjugate of $\mu_1$ vanishing at $f(0)$. Then $\wt{\mu_1 \circ
f}$ is a harmonic function on $D_*$ vanishing at $0$ and
$\wt{\mu_1\circ f}= \wt{\mu_1}\circ f$. By Theorem \ref{aug27.0} and Theorem
\ref{j15.7}, $\mu_1$ is a rotation rate for an ORBM if
and only if $\wt \mu_1 (z) > -1 $ for all $z\in D$.

(iv) Suppose that $f$ is a conformal mapping from a bounded simply connected planar domain $D_1$ to another bounded simply connected planar domain $D_2$. Let
${\calK}(D_1, D_2,f)$ be the family of positive integrable harmonic functions $h$ in $D_1$ such that
$ h\circ f^{-1}\in L^1( D_2)$.
By Theorems  \ref{T:3.8} and \ref{j15.7}, $f$ establishes a correspondence between a subfamily  of  ORBMs on $D_1$ that have the density of
stationary distribution in ${\calK}(D_1, D_2, f)$ and a subfamily of ORBMs on $D_2$
that have the density of stationary distribution in ${\calK}(D_2, D_1, f^{-1})$.
The subfamilies are non-empty because they always contain normally reflected Brownian motions.
Theorem \ref{j17.10} below gives some sufficient conditions on the integrability of positive harmonic functions in domains.
The correspondence between ORBMs on different planar domains need not extend to all ORBMs on either side because the assumption $\bar h \in L^1(D)$ of Theorem \ref{j15.7} does not hold for some $h$ and $f$; see Example \ref{E:4.2} below.

(v) There exist processes in $D$ that are extensions of Brownian motion in $D$, which have a stationary density  and a ``limiting rate of rotation'' $\mu_0$ and which are not ORBM's. An example of such a process is the conformal image of reflected Brownian motion in $D_*$ with diffusion on the boundary (see a Ph.D.~thesis \cite{card} devoted to this class of processes).
\end{remark}

\begin{theorem}\label{j18.3}
Suppose that $D\subset \C$ is a simply connected bounded Jordan domain and $f$ is a conformal mapping from  $D_*$ onto  $D$,
which, by  Carath\'eodory's theorem,  necessarily extends to a homeomorphism from $\ol D_*$ onto $\ol D$.
Consider a sequence of $C^2$ functions $\theta_k: \prt D_* \to (-\pi/2, \pi/2)$ and processes $X^k$ which
satisfy \eqref{n24.1} and assumptions (a)-(e) of Theorem \ref{j18.1}. Let $(h_k,\mu_k) \lra \theta_k$ and
let
$c_k(t),\zeta_k$ and $Y^k$ be defined relative to $\theta_k,f$ and $X^k$ as in Theorem \ref{j15.7}.

Let $\nu $, $h$ and $X$ be defined as in Theorem \ref{j18.1}.
Let
$c(t),\zeta$ and $Y$ be defined relative to $\theta,f$ and $X$ as in Theorem \ref{j15.7}.
In (i)-(iv) below,   $\bar h := h \circ f^{-1}$ is assumed to be in $L^1(D)$.

\begin{enumerate}[\rm (i)]
\item Almost surely,  $\zeta_k=\infty $ for every $k\geq 1$  and $\zeta=\infty$.
\item The process $Y$ is an ERBM in $D$ corresponding to
    $(\bar\nu(dx),\bar H^x)_{x\in \prt D}$ with excursion
    intensity $\bar\nu$ defined by $\bar \nu(A) =
    \nu(f^{-1}(A)) $ for $A\subset \prt D$, and excursion
    laws $\bar H^x$ normalized so that the density of the
    expected occupation time for $\bar H^x$ is the Martin
    kernel $K_x(\,\cdot \,)$  in $D$ normalized by
    $K_x(f(0)) = 1$.
\item
Processes $Y^k$ converge to $Y$ in the sense of convergence of finite dimensional distributions.
\item The process $Y$ has a stationary distribution with
    the density $\wh h = \bar h / \|\bar h\|_{L^1(D)}$.
\item
For every positive harmonic function $\wh h$ in $D$ with $\|\wh
h\|_{L^1(D)}=1$
 such that $ \wh h \circ f$ is Lipschitz on $\ol D_*$ and strictly positive on $\prt D_*$,
  there is a sequence of $C^2$ functions $\theta_k: \prt D_*
\to (-\pi/2, \pi/2)$
such that $Y^k $ and $Y$ can be constructed as in the initial part of the theorem and
the stationary measure for ERBM $Y$ has density $\wh h$.
\end{enumerate}
\end{theorem}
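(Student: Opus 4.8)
The plan is to prove Theorem~\ref{j18.3} by pushing the convergence of Theorem~\ref{j18.1} forward through the conformal time-change construction of Theorem~\ref{j15.7}, working at the level of the excursion point process, and identifying the limit as an ERBM in $D$ via conformal invariance of Brownian motion. I would begin with~(i). For each $k$, since $h_k(0)=1/\pi$ and $h_k$ has Lipschitz constant $\lambda$ on $\prt D_*$, the mean value property gives $h_k\le 1/\pi+\pi\lambda$ on $\prt D_*$, hence $h_k$ is bounded on $D_*$ by the maximum principle, so $\bar h_k=h_k\circ f^{-1}\in L^1(D)$ and $\zeta_k=\infty$ a.s.\ by Theorem~\ref{j15.7}(i). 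For the ERBM $X$ one argues as in the proof of Theorem~\ref{j15.7}(i): started from its stationary law, whose density is a multiple of $h$ by Proposition~\ref{P:3.10}(i), the expected occupation measure of $X$ over $[0,t]$ is a multiple of $t\,h(z)\,dz$, so the change of variables $w=f(z)$, $|f'(z)|^2\,dz=dw$, gives $\E_{\mathrm{stat}}[c(t)]=\mathrm{const}\cdot t\int_{D_*}|f'(z)|^2h(z)\,dz=\mathrm{const}\cdot t\,\|\bar h\|_{L^1(D)}<\infty$; an analogous estimate with the Green function $G_{D_*}$ (bounded away from $\prt D_*$) in place of $h$ handles a general starting point through the excursion decomposition, so $c(t)<\infty$ for all $t$, a.s. Strict monotonicity of $c$ and the fact that $X$ spends infinite time in each compact subset of $D_*$ give $c(t)\to\infty$; in particular the ERBM in $D$ appearing in~(ii) is well defined.

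The core is~(ii). By conformal invariance of planar Brownian motion, if $\exc$ is an excursion of $X$ from $\prt D_*$ and $c_\exc(u)=\int_0^u|f'(\exc(r))|^2\,dr$, then $\bar\exc(s):=f(\exc(c_\exc^{-1}(s)))$ is an excursion of Brownian motion in $D$ from $f(\exc(0))$, killed on exiting $D$. Performing this transformation excursion by excursion turns the Poisson point process of excursions of $X$---which by Definition~\ref{jan22.1} has intensity $\lambda\times\int_{\prt D_*}H^x\,\nu(dx)$ on the local-time scale---into a Poisson point process on the same scale with intensity $\lambda\times\int_{\prt D_*}\bar H^{f(x)}\,\nu(dx)=\lambda\times\int_{\prt D}\bar H^y\,\bar\nu(dy)$, where $\bar H^y$ is the image of $H^{f^{-1}(y)}$ under $\exc\mapsto\bar\exc$ and $\bar\nu=\nu\circ f^{-1}$; the last change of variables is legitimate because $f$ extends to a homeomorphism of $\ol D_*$ onto $\ol D$ by Carath\'eodory's theorem. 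Stacking the transformed excursions, with no waiting between them, reproduces the global clock of $Y$, so $Y$ is exactly the ERBM in $D$ built from $(\bar\nu,\bar H^y)_{y\in\prt D}$. It remains to identify the normalization of $\bar H^y$. Since the time change multiplies occupation times by $|f'(z)|^2\,dz=dw$, writing $K_x$ for the Poisson kernel of $D_*$ with $K_x(0)=1$---the normalization of $H^x$ used in~\eqref{jan17.1}, which is the one forced by requiring all $H^x$ to be rotations of $H^1$---one gets, for $A\subset D$,
\begin{align*}
\bar H^{f(x)}\Bigl(\int_0^\infty \I_A(\bar\exc(s))\,ds\Bigr)
&= H^x\Bigl(\int_0^{\zeta(\exc)} \I_{f^{-1}(A)}(\exc(u))\,|f'(\exc(u))|^2\,du\Bigr)\\
&= \int_{f^{-1}(A)}|f'(z)|^2 K_x(z)\,dz
= \int_A K_x(f^{-1}(w))\,dw .
\end{align*}
As $K_x\circ f^{-1}$ is positive harmonic in $D$, vanishes on $\prt D\setminus\{f(x)\}$, and equals $1$ at $f(0)$, it is the Martin kernel of $D$ at $f(x)$ normalized by $K_{f(x)}(f(0))=1$---the asserted normalization of $\bar H^{f(x)}$.

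Parts~(iii)--(v) are then bookkeeping. For~(iii) one transfers the convergence $X^k\to X$ (finite-dimensional distributions, Theorem~\ref{j18.1}) through the conformal time change as in the proof of Theorem~\ref{j15.7}, or equivalently argues with the conformally mapped excursions exactly as in the proof of Theorem~\ref{j18.1}; in either case the uniform upper and lower bounds on $h_k$ on $\prt D_*$, which yield $\int_{D_*}|f'|^2 h_k\,dz\le\mathrm{const}\cdot|D|$ uniformly in $k$, provide the tightness and uniform integrability needed to control the time changes $c_k^{-1}$, and $Y^k\to Y$ in finite-dimensional distributions follows. For~(iv), $Y$ is ERBM in $D$ with excursion intensity $\bar\nu$ by~(ii), so Proposition~\ref{P:3.10}(i) gives that its stationary density is a multiple of $y\mapsto\int_{\prt D}K_x(y)\,\bar\nu(dx)=\int_{\prt D_*}K_x(f^{-1}(y))\,\nu(dx)=h(f^{-1}(y))=\bar h(y)$, i.e.\ $\wh h=\bar h/\|\bar h\|_{L^1(D)}$ (alternatively, let $k\to\infty$ in~(iii) using Theorem~\ref{j15.7}(iii) for the $Y^k$). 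For~(v), given $\wh h$ with $\wh h\circ f$ Lipschitz and strictly positive on $\prt D_*$, set $h:=\wh h\circ f/\|\wh h\circ f\|_1$; then $h$ is positive harmonic on $D_*$, Lipschitz and positive on $\prt D_*$, $\|h\|_1=1$, and $\bar h=h\circ f^{-1}=\wh h/\|\wh h\circ f\|_1\in L^1(D)$, so Theorem~\ref{j18.1}(ii) supplies $C^2$ functions $\theta_k$ satisfying~(a)--(e) with $\nu(dx)=h(x)\,dx$, and parts~(i)--(iv) applied to this data show that $Y$ is ERBM in $D$ with stationary density $\bar h/\|\bar h\|_{L^1(D)}=\wh h$.

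The main obstacle is~(ii): one has to carry the excursion Poisson point process, together with its intensity measure \emph{and} the normalization of its excursion laws, through the combined operation of conformal mapping and random time change without losing multiplicative constants---the occupation-density identity above is precisely what pins down the Martin-kernel normalization of $\bar H^y$. A secondary difficulty is the tightness and uniform integrability in~(iii), since the conformal time change is not a continuous functional on path space in general; here the uniform control of the occupation measures of the $X^k$ coming from the bounds on $h_k$ is exactly what is needed.
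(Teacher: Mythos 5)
Your proposal is correct in substance, but in the key part (ii) it takes a genuinely different route from the paper. The paper, after invoking conformal invariance of excursion laws to see that $Y$ is an ERBM and fixing $\bar\nu=\nu\circ f^{-1}$ by the freedom in Remark \ref{j3.1}(ii), pins down the normalization of $\bar H^x$ indirectly: it compares, via the Green-function asymptotics \eqref{jan22.2}--\eqref{jan22.5} and an ergodic count of excursions hitting small balls (using that $f$ maps small balls to near-balls), the ratios of excursion counts in $D_*$ and $D$, and concludes that the constants $\bar c_x$ relating occupation densities to Martin kernels are independent of $x$, hence may be taken equal to $1$. You instead compute the pushforward directly: the occupation density of the image excursion law is obtained from \eqref{jan17.1} by the change of variables $dw=|f'(z)|^2dz$, giving $K_x\circ f^{-1}$, which is identified with the Martin kernel of $D$ normalized at $f(0)$ via Remark \ref{j3.1}(viii); this is cleaner and gives the constant exactly rather than only its $x$-independence. (One quibble: the normalization $K_x(0)=1$ of $H^x$ is not literally ``forced by requiring all $H^x$ to be rotations of $H^1$''---a common multiplicative constant remains free---but by Remark \ref{j3.1}(ii)--(iii) that constant is immaterial, so your conclusion stands.) Your part (iv) likewise goes directly through Proposition \ref{P:3.10}(i) applied to $(\bar\nu,\bar H^x)$, whereas the paper repeats the occupation-ratio argument of Theorem \ref{j15.7}(iii); your (v) invokes Theorem \ref{j18.1}(ii) where the paper redoes the dilation construction---both legitimate. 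The thinnest point is (iii): the paper's proof needs two concrete steps you only gesture at, namely upgrading the finite-dimensional convergence $X^k\to X$ to almost sure convergence at almost every time simultaneously (via the Skorokhod representation applied to the excursion-level convergence extracted from the proof of Theorem \ref{j18.1}), and then proving $c_k(t)\to c(t)$ in probability by splitting the clock near and away from $\prt D_*$; your uniform bound $\sup_k\int_{D_\eps}|f'|^2h_k\,dz\le C\int_{D_\eps}|f'|^2\,dz\to0$ does supply the required near-boundary control (indeed more simply than the paper's appeal to uniform convergence $h_k\to h$), but the clock-convergence argument itself should be spelled out rather than attributed to Theorem \ref{j15.7}, whose proof does not treat a sequence of processes.
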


The next two theorems show that ORBM in an arbitrary domain (possibly with a fractal boundary) can be approximated by ORBMs in smooth domains where the oblique angle of reflection has a natural interpretation. This provides a justification of the name ``obliquely reflected Brownian motion'' for processes in domains with rough boundaries.

\begin{theorem}\label{j17.3}
Suppose that $D\subset \C$ is a simply connected
Jordan domain, $y_0\in D$ and $f$ is a conformal mapping from $D_*$
onto  $D$ which, necessarily, has a
one-to-one continuous extension to $\ol D_*$.
Let $D_k$ be simply connected domains with smooth boundaries such that $y_0 \in D_k \subset D_{k+1}\subset D$ for all $k$ and $\bigcup_k D_k = D$.
Let $f_k: D_* \to D_k$ be conformal mappings such that $f_k^{-1}(y_0) = f^{-1}(y_0)$ and $f_k \to f$ as $k\to \infty$.

Suppose that $\mu_0\in \R$, $\bar h \in
L^1(D)$ is positive and harmonic with $\|\bar h\|_{L^1 (D)}=1$,
and $\bar h \circ f$ is  strictly positive on $\prt D_*$.
  Let
$Y$ be the process constructed as in Theorem \ref{j15.7} (v), relative
to $D,f,\mu_0$ and $\bar h$,
with $Y_0=y_0$.
Let $\bar h_k = \bar h / \|\bar
h\|_{L^1(D_k)}$. Let $Y^k$ be defined in the same way that $Y$ was
defined, relative to $D_k,f_k,\mu_0$ and $\bar h_k$,
with $Y^k_0=y_0$.
Then $Y^k$ converge weakly to $Y$
in $M_1^\calT$ topology.
\end{theorem}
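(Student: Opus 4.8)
The plan is to pull the whole problem back to the unit disc, apply Theorem~\ref{j15.5}(viii) there, and then push the resulting convergence forward through the conformal maps and time changes. Set $z_0:=f^{-1}(y_0)$. By the construction \eqref{jan18.1}--\eqref{feb17.2} in Theorem~\ref{j15.7}(v), $Y$ is the time-changed conformal image under $f$ of the ORBM $X$ in $D_*$ started at $z_0$ and parametrized by $(h_1,\mu_0)\in\calH$ with $h_1=\bar h\circ f/\|\bar h\circ f\|_1$; likewise, for each $k$, $Y^k$ is the time-changed conformal image under $f_k$ of the ORBM $X^k$ in $D_*$ started at $f_k^{-1}(y_0)=z_0$ (the same point for every $k$, by the normalization $f_k^{-1}(y_0)=f^{-1}(y_0)$) and parametrized by $(h_1^k,\mu_0)\in\calH$ with $h_1^k=\bar h\circ f_k/\|\bar h\circ f_k\|_1$ --- here the normalizing factor $\|\bar h\|_{L^1(D_k)}$ appearing in $\bar h_k$ cancels. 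Since $\bar h\circ f$ and each $\bar h\circ f_k$ are strictly positive on $\prt D_*$, the corresponding angles $\theta,\theta^k\in\calT$ are never tangential, so $N^+_\theta,N^-_\theta,N^+_{\theta^k},N^-_{\theta^k}$ are all null; hence $X$ and $X^k$ are continuous, and therefore so are $Y$ and $Y^k$. Moreover $D_k$ has smooth boundary and $\bar h$ is harmonic, hence smooth, near $\prt D_k$, so $\theta^k$ is $C^2$ and $X^k$ is the classical ORBM of Theorem~\ref{j25.1}.

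Next I would prove convergence of the parameters. Because $f_k\to f$ locally uniformly on $D_*$ and $f(K)\cup\bigcup_k f_k(K)$ is relatively compact in $D$ for every compact $K\subset D_*$, and $\bar h$ is continuous in $D$, we get $\bar h\circ f_k\to\bar h\circ f$ locally uniformly on $D_*$; in particular $\|\bar h\circ f_k\|_1=\pi\bar h(f_k(0))\to\pi\bar h(f(0))=\|\bar h\circ f\|_1$, so $h_1^k\to h_1$ locally uniformly on $D_*$. By Remark~\ref{ju20.5}(i)--(ii) the correspondence $(h,\mu_0)\mapsto\theta$ is continuous for locally uniform convergence, and for functions in $\calB$ locally uniform convergence is equivalent to weak-$*$ convergence of boundary values in $L^\infty(\prt D_*)$; hence $\theta^k\to\theta$ weak-$*$ in $L^\infty(\prt D_*)$. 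Theorem~\ref{j15.5}(viii) then gives that $X^k$ converge weakly to $X$ in the $M_1^\calT$ topology. (If $z_0=0$ one first runs the processes for a small fixed time and uses the strong Markov property; the standing hypothesis $z_0\ne0$ in Theorem~\ref{j15.5} is only there so that $\arg$ is well defined at time $0$.)

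The remaining step, which I expect to be the main obstacle, is to transfer this convergence back to $D$. Using the Skorokhod representation theorem, realize $X^k\to X$ almost surely on a common probability space; since $X$ and all $X^k$ are continuous, this is a.s.\ uniform convergence on compact time intervals. I then need $c_k(t)=\int_0^t|f_k'(X^k_s)|^2\,ds\to c(t)=\int_0^t|f'(X_s)|^2\,ds$ a.s.\ for every $t$: the integrands converge for a.e.\ $s$ because $\{s:X_s\in\prt D_*\}$ is Lebesgue-null and $f_k'\to f'$ locally uniformly on $D_*$, while the contribution from times with $|X^k_s|\ge1-\eps$ is controlled, uniformly in $k$, by combining the area identity $\int_{D_*}|f_k'|^2\,dz=|D_k|\le|D|$ with the uniform estimate on the near-boundary occupation of $X^k$ used in the proof of Theorem~\ref{j15.7}(i). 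Since $c$ and the $c_k$ are continuous, strictly increasing, and satisfy $c(\infty)=c_k(\infty)=\infty$ a.s.\ (Theorem~\ref{j15.7}(i)), it follows that $c_k^{-1}\to c^{-1}$ locally uniformly a.s., and hence $X^k_{c_k^{-1}(\cdot)}\to X_{c^{-1}(\cdot)}$ locally uniformly a.s. Finally, by the classical fact that conformal maps onto the smooth Jordan domains $D_k$ exhausting the Jordan domain $D$ converge to $f$ uniformly on $\ol D_*$ (of the type used in \cite{BurMar}), we conclude $Y^k_t=f_k(X^k_{c_k^{-1}(t)})\to f(X_{c^{-1}(t)})=Y_t$ uniformly on compact time intervals, a.s., and therefore $Y^k$ converge weakly to $Y$ in the $M_1^\calT$ topology. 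The two delicate points are the uniform-in-$k$ control of the clocks $c_k$ near $\prt D_*$ and the up-to-the-boundary convergence $f_k\to f$.
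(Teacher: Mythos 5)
Your overall route is the same as the paper's: pull everything back to $D_*$, observe that $Y$ and $Y^k$ are time-changed images under $f$ and $f_k$ of ORBMs $X\lra(h,\mu_0)$ and $X^k\lra(h_k,\mu_0)$ with $h=\bar h\circ f/\|\bar h\circ f\|_1$, $h_k=\bar h\circ f_k/\|\bar h\circ f_k\|_1$, get $h_k\to h$ locally uniformly, hence $\theta_k\to\theta$ weak-$*$, invoke Theorem \ref{j15.5} to get $X^k\Rightarrow X$ in $M_1^\calT$, and then transfer through the clocks. The genuine gap is in the step you yourself flag as the main obstacle: the uniform-in-$k$ control of $\int_0^t |f_k'(X^k_s)|^2\bone_{\{X^k_s\in D_\eps\}}\,ds$, where $D_\eps=D_*\setminus\ball(0,1-\eps)$. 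Combining the area identity $\int_{D_*}|f_k'|^2\,dz=|D_k|$ with a bound on the time $X^k$ spends in the annulus cannot work: $|f_k'|^2$ is unbounded near $\prt D_*$, so a small occupation \emph{time} of $D_\eps$ gives no control of the integral, and conversely a small \emph{area} integral of $|f_k'|^2$ over $D_\eps$ only helps if the occupation \emph{density} of $X^k$ there is bounded uniformly in $k$. That density (under the stationary law) is $h_k=\bar h\circ f_k/(\pi\bar h(a_k))$, and $\bar h$ is only assumed to lie in $L^1(D)$; it may blow up near $\prt D$, so neither pairing closes the estimate, and for the point-started process (which is what you work with throughout) there is no clean comparison at all.

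The paper's fix is different in exactly this spot: it first puts $X^k$ and $X$ in their stationary distributions, so that
\begin{align*}
\E\left[\int_0^t |f_k'(X^k_s)|^2\bone_{\{X^k_s\in D_\eps\}}\,ds\right]
= t\int_{D_\eps}|f_k'(z)|^2 h_k(z)\,dz
= \frac{t}{\pi\,\bar h(a_k)}\int_{f_k(D_\eps)}\bar h(y)\,dy,
\end{align*}
with $a_k=f_k(0)\to f(0)$, and then uses $\bar h\in L^1(D)$ together with the exhaustion (any compact $K\subset D$ satisfies $K\subset f_k(\ball(0,1-\eps))$ for all large $k$) to make the right-hand side small uniformly in $k$, which is \eqref{feb8.2}; Chebyshev then gives \eqref{feb11.1}, the interior part of the clock converges by bounded convergence, and the stationarity assumption is removed at the very end by the absolute-continuity argument from the proof of Theorem \ref{j15.7}(i). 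Your sketch contains no substitute for this mechanism, so the clock convergence, and with it the whole transfer to $D$, is not established. Two smaller points: the uniform convergence $f_k\to f$ on $\ol D_*$ is asserted as classical rather than derived (the paper's final step works instead with the $M_1^\calT$ graph parametrizations and continuity of $f$ on $\ol D_*$, so up-to-the-boundary convergence of $f_k$ is not needed in that form), and your continuity claim for $X$ reads the hypothesis that $\bar h\circ f$ is strictly positive on $\prt D_*$ as a statement about a.e.\ nontangential boundary values; the paper's argument never needs continuity of $X$.
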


\medskip

The following   concrete example shows how one can approximate a general ORBM in $D$
by ORBMs in an increasing sequence of smooth domains with smooth reflection angles.
Suppose that  $Y \lra (\theta,f)  \lra (\bar h, \mu_0, f)$.
Take any strictly increasing sequence of positive numbers $r_k$ that increases to 1.
Let $D_k= f(B(0, r_k))$ and $f_k(z) = f(z/r_k)$. It is easy to see that
$D_k$ is a smooth subdomain of $D$ and
$f_k$ is a conformal mapping from $B(0, r_k)$ to $D$.
Clearly $h_k (z):= \bar h (f(r_k z))$ is a positive harmonic function on $D_*$ that is smooth
on $\ol D_*$.
By Theorem \ref{aug27.0}, $  \theta_k \lra ( h_k/ h_k (0), \mu_0)$  is smooth on $\partial D_*$.
Thus $\bar \theta_k  (w) =  \theta_k ( f^{-1}(w)/r_k) \in (-\pi/2, \pi/2)$ defines a smooth function on $\partial D_k$.
Let $Y^k$ be the ORBM on $D_k$ with reflection angle $\bar \theta_k$ constructed in Theorem \ref{j25.1}(ii) .
Theorem \ref{j17.3} asserts that $Y^k$ converge weakly to ORBM $Y $ on $D$
in $M_1^\calT$ topology.

\medskip

\begin{theorem}\label{j18.8}
Suppose that $D\subset \C$ is a simply connected
Jordan domain, $y_0\in D$ and $f: D_*
\to D$ is a conformal mapping  which, necessarily, has a
one-to-one continuous extension to $\ol D_*$.
Let $D_k$ be simply connected
domains with smooth boundaries such that $y_0 \in D_k \subset D_{k+1}\subset D$ for all $k$ and $\bigcup_k D_k = D$.
Let $f_k: D_* \to D_k$ be one-to-one analytic functions such that $f_k^{-1}(y_0) = f^{-1}(y_0)$ and $f_k \to f$ as $k\to \infty$.

Suppose that $\theta: \prt D \to (-\pi/2, \pi/2)$ is a
 continuous function.
 Let $\theta_*: \prt D_* \to (-\pi/2, \pi/2)$ be defined by $\theta_* = \theta \circ f$.
Let $Y$ be ORBM in $D$, such that $Y \lra (\theta_*, f)$ and $Y_0=y_0$.

For every $k$,
let $g_k:\prt D_k \to \prt D$ be a measurable function such that for every $x\in \prt D_k$, $g_k(x)=y\in\prt D$ and $|x-y| = \dist(x, \prt D)$. Let $\theta_k(x)= \theta(g_k(x))$ for $x\in \prt D_k$.
Let
$Y^k$ be the ORBM in $D_k$ such that $Y^k \lra (\theta_k, f_k)$ and $Y^k_0=y_0$.
Then $Y^k$'s converge weakly in $M_1$ topology to $Y$.
\end{theorem}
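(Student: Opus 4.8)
The plan is to push everything down to the unit disk through the conformal maps, apply the stability result Theorem~\ref{j15.5}(viii) to the disk processes, and then transfer the convergence back up through the maps and time changes, exactly as in the proofs of Theorems~\ref{j15.7} and \ref{j17.3}. First I would record the disk picture: by the construction in Theorem~\ref{j15.7}, $Y_t = f(X_{c^{-1}(t)})$ and $Y^k_t = f_k(X^k_{c_k^{-1}(t)})$, where $c(t)=\int_0^t|f'(X_s)|^2\,ds$, $c_k(t)=\int_0^t|f_k'(X^k_s)|^2\,ds$, and $X\lra\theta_*$, $X^k\lra\theta^k_*$ are ORBMs in $D_*$ with $\theta_*=\theta\circ f$ and $\theta^k_*:=\theta_k\circ f_k=\theta\circ g_k\circ f_k$ on $\prt D_*$. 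The identity $\theta^k_*=\theta_k\circ f_k$ uses that $D_k$ is smooth, so $f_k$ extends conformally (hence angle‑to‑the‑normal preserving) to $\prt D_*$ and carries the reflection field of angle $\theta_k$ on $\prt D_k$ to one of the same angle on $\prt D_*$. Since $\theta$ is continuous on the compact set $\prt D$, all the angles $\theta_*,\theta^k_*$ take values in a fixed closed subinterval of $(-\pi/2,\pi/2)$, so $X,X^k,Y,Y^k$ are continuous and the $M_1^\calT$ topology on them reduces to the locally uniform topology on $C([0,\infty),\ol D_*)$ (resp. $C([0,\infty),\ol D)$); all four start from $w_0:=f^{-1}(y_0)=f_k^{-1}(y_0)$, and we may assume $w_0\ne 0$ (otherwise precompose all the $f$'s with a common disk automorphism, which leaves $Y,Y^k$ unchanged but shifts the intermediate $X,X^k$).

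The first real step is to show $\theta^k_*\to\theta_*$ uniformly on $\prt D_*$. Since $D_k\uparrow D$ with $\bigcup_k D_k=D$ and $D$ is a Jordan domain, $\prt D_k\to\prt D$ in the Hausdorff metric (one inclusion is a compactness argument using that the $D_k$ are open and nested, the other uses accessibility of points of the Jordan curve $\prt D$), and the Carathéodory kernel theorem then upgrades the hypothesis $f_k\to f$ (locally uniformly on $D_*$) to uniform convergence on $\ol D_*$. As $\sup_{w\in\prt D_k}|g_k(w)-w|=\sup_{w\in\prt D_k}\dist(w,\prt D)\to 0$, this gives $g_k\circ f_k\to f$ uniformly on $\prt D_*$, and uniform continuity of $\theta$ yields $\theta^k_*=\theta\circ g_k\circ f_k\to\theta\circ f=\theta_*$ uniformly, in particular weak‑$*$ in $L^\infty(\prt D_*)$. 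Applying Theorem~\ref{j15.5}(viii) with $\theta_*$ in place of $\theta$, $\theta^k_*$ in place of $\bar\theta_k$, and $z_k=z_0=w_0$, I obtain $X^k\to X$ weakly in $M_1^\calT$, i.e.\ in $C([0,\infty),\ol D_*)$; by Skorokhod's representation theorem I may realize versions on a common probability space (keeping the names) so that $X^k\to X$ locally uniformly, a.s.

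It then remains to show, on this coupled space, that $c_k\to c$ locally uniformly a.s.; since $|f'|,|f_k'|>0$ on $D_*$ and $X$ spends zero Lebesgue‑time on $\prt D_*$, the functions $c,c_k$ are continuous and strictly increasing, so $c_k^{-1}\to c^{-1}$ locally uniformly and hence $Y^k_t=f_k(X^k_{c_k^{-1}(t)})\to f(X_{c^{-1}(t)})=Y_t$ locally uniformly a.s., which gives the asserted weak convergence of $Y^k$ to $Y$ in $M_1$. For $c_k\to c$ I would fix $\delta>0$ and split $c_k(T)$ into a \emph{bulk} part over $\{\dist(X^k_s,\prt D_*)>\delta\}$ and a \emph{collar} part over $\{\dist(X^k_s,\prt D_*)\le\delta\}$. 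The bulk part converges to the analogous quantity for $X$ because $X^k\to X$ and $f_k'\to f'$ locally uniformly on $D_*$ (Cauchy estimates). For the collar part I would decompose the path of $X^k$ using the exit system of Section~\ref{sec:exc}: the initial excursion (Brownian motion from $w_0$ killed on $\prt D_*$) contributes, in expectation, $\int_{\{\dist(z,\prt D_*)\le\delta\}}|f_k'(z)|^2\,G_{D_*}(w_0,z)\,dz\le C\delta\int_{D_*}|f_k'|^2=C\delta\,|D_k|\le C\delta\,|D|$, using that the Green function $G_{D_*}$ vanishes linearly at the smooth boundary $\prt D_*$; the returning excursions are handled by \eqref{exitsyst} and the occupation‑density identity \eqref{jan17.1}, so that their contribution is governed by the functions $\Phi^k_\delta(x):=\int_{\{\dist(z,\prt D_*)\le\delta\}}|f_k'(z)|^2\,K_x(z)\,dz$ on $\prt D_*$, whose average against $|dx|/(2\pi)$ equals $\int_{\{\dist(z,\prt D_*)\le\delta\}}|f_k'|^2\le|D_k|\le|D|$ because $\int_{\prt D_*}K_x(z)\,|dx|/(2\pi)\equiv 1$. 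Combining this with the fact (read off from \eqref{e:2.31}) that the radial process $|X^k|$, hence the local time of $X^k$ on $\prt D_*$, has the same law for every $k$, and with the a priori finiteness $c(T)<\infty$ a.s.\ coming from Theorem~\ref{j15.7}(i), one controls the collar part uniformly in $k$.

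I expect this last uniform‑in‑$k$ near‑boundary estimate for the time change to be the main obstacle: it is precisely the point that rules out $|f_k'|^2$ concentrating the occupation mass of $X^k$ against $\prt D_*$ as $k\to\infty$ (a genuine danger when $\prt D$ is, say, an Osgood curve). Everything else is either routine or a direct appeal to Theorem~\ref{j15.5}(viii), Theorem~\ref{j15.7}, and the Carathéodory kernel theorem; moreover the passage through the time change and the conformal map is carried out in essentially the same way in the proofs of Theorems~\ref{j15.7} and \ref{j17.3}, so much of that part can be quoted rather than repeated, with the only new input being the convergence of the reflection angles $\theta^k_*\to\theta_*$ established above.
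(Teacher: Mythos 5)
Your overall skeleton (pass to the disk, prove convergence of the reflection angles there, invoke the stability result of Theorem \ref{j15.5}, prove convergence of the clocks $c_k\to c$, and map back through $f_k,f$) is exactly the paper's, which proves Theorem \ref{j18.8} by rerunning the proof of Theorem \ref{j17.3}. However, your first key step is not justified: the Carath\'eodory kernel theorem only gives locally uniform convergence of $f_k$ on $D_*$; uniform convergence on $\ol D_*$ is equivalent (Rad\'o's theorem) to Fr\'echet convergence of the boundary Jordan curves, and Hausdorff convergence of $\prt D_k$ to $\prt D$ --- which is all that follows from $D_k\uparrow D$, $\bigcup_k D_k=D$ --- does not imply it. A nested exhaustion of a Jordan domain by smooth Jordan subdomains may develop long thin fingers or spirals inside an $\eps$-collar of $\prt D$; for such exhaustions $f_k\not\to f$ uniformly on $\ol D_*$ and $\theta\circ g_k\circ f_k$ need not converge to $\theta\circ f$ uniformly, nor even pointwise a.e. What saves the argument is that only weak-$*$ convergence of the disk angles is needed for Theorem \ref{j15.5}(viii), and this should be obtained, as in the paper's proof of Theorem \ref{j17.3}, by an interior argument: show that the harmonic extensions converge locally uniformly in $D_*$ (e.g., the extension of $\theta_k\circ f_k$ at $z$ is $\E[\theta(g_k(W_{\tau_{D_k}}))]$ for Brownian motion started at $f_k(z)$, and one uses $\tau_{D_k}\uparrow\tau_D$, $\sup_{x\in\prt D_k}\dist(x,\prt D)\to 0$ and continuity of $\theta$), then apply Remark \ref{ju20.5}(ii); no boundary convergence of $f_k$ enters anywhere in the paper's route.

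The second, and more serious, gap is the one you flag yourself: the uniform-in-$k$ collar estimate for the clocks is never actually established, and the sketch you give cannot close it. Averaging $\Phi^k_\delta(x)=\int_{\{\dist(z,\prt D_*)\le\delta\}}|f_k'(z)|^2K_x(z)\,dz$ against the uniform measure $|dx|/(2\pi)$ controls nothing, because the boundary local time of the \emph{oblique} process $X^k$ is not spread uniformly over $\prt D_*$: by \eqref{e:4.1} its Revuz measure is $\tfrac12 h_k(x)\,dx$, so the excursion contribution coming from \eqref{exitsyst} and \eqref{jan17.1} is governed by the $h_k$-weighted collar integral $\int_{\{\dist(z,\prt D_*)\le\delta\}}|f_k'(z)|^2h_k(z)\,dz$, not by the unweighted one. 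The fact that the \emph{law} of the total local time is the same for every $k$ says nothing about where on the circle it accrues, and the a.s.\ finiteness of $c(T)$ for the limit process gives no uniformity in $k$; so your argument does not rule out precisely the concentration of $|f_k'|^2$ against the part of $\prt D_*$ where $X^k$ accumulates local time. This is exactly where the paper does its work: it first runs $X^k$ from the stationary law $h_k(z)\,dz$, so that $\E[\bar c_k(t)]=t\int_{D_\eps}|f_k'|^2h_k\,dz$, verifies the conditions \eqref{feb11.10}--\eqref{feb11.12} and \eqref{feb8.1}--\eqref{feb8.2} in the present context, deduces the clock convergence, and only then removes the stationarity assumption as in the proof of Theorem \ref{j15.7}(i). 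Without the $h_k$-weighted bound (or an equivalent control of the spatial distribution of the local time), your proof is incomplete at its decisive step.
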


The assumption that $\bar h \in L^1(D)$ applied in Theorem
\ref{j15.7} is sufficient but not necessary.
We will sketch an
argument illustrating this claim in Example \ref{feb15.1}
below.
In other words, the construction given in Theorem
\ref{j15.7} generates a process $Y_t$ for all $t\geq 0$ for some
domains $D$ and functions  $\bar h$ such that $\| \bar h \|_{L^1(D)} =
\infty$. Of course, in such a case no constant multiple of $\bar h(x) dx$ can be the stationary (probability) distribution for $Y$, although it can be an invariant measure.

In view of the assumption of integrability of $\bar h$ made
in Theorems \ref{j15.7} and \ref{j17.3}, it would be useful to
have an effective tool to check whether a given harmonic
function is in $L^1(D)$. We do not have such a test and we
doubt that a universal test of this kind exists. We do have some sufficient conditions for integrability of
positive harmonic functions.
First, recall
Theorem \ref{j27.4}. It contains a criterion for a harmonic function $h$
in $D_*$ corresponding to an angle of oblique reflection
$\theta$ to be bounded. A ``push'' $h\circ f^{-1}$ of such
function to a bounded simply connected domain is also bounded,
and hence integrable.
Second,
Theorem \ref{j17.10} below presents some examples of
domains where all positive harmonic functions are integrable.

Recall
that a function $\psi: \R \to \R$ is
Lipschitz, with constant $\lambda < \infty$, if $|\psi(x) - \psi(y)| \leq \lambda |x-y|$ for all
$x,y \in \R$.
A domain $D\subset \R^2$ is said to be Lipschitz, with
constant $\lambda$, if there exists $\delta >0$ such that, for
every $x \in \prt D$, there exists an orthonormal
basis $(e_1, e_2)$
and a Lipschitz function $\psi: \R \to \R$, with constant $\lambda$, such that
\begin{align*}
\{y \in \ball(x,\delta) \cap D\} =
\{y\in\ball(x,\delta): \psi(\langle y,e_1\rangle)<\langle y,e_2\rangle\}.
\end{align*}

We recall the definition of a John domain following \cite{Aikawa}.
Let $\delta_D(x) = \dist(x, \prt D)$ and $x_0 \in D$. We say that $D$ is a John domain with John constant
$c_J > 0$ if each $x \in D$ can be joined to $x_0$ by a rectifiable curve $\gamma$ such that
$\delta_D(y) \geq c_J \ell(\gamma(x, y))$ for all $y \in \gamma$, where
$\gamma(x, y)$ is the subarc of $\gamma$ from $x$ to $y$ and $\ell(\gamma(x, y))$ is the length of $\gamma(x, y)$.
The first two parts of the following theorem follow from Theorems 1 and 2 of \cite{Aikawa}.

\begin{theorem}\label{j17.10}
\begin{enumerate}[\rm (i)]
\item  {\rm  (\cite[Thm.~1]{Aikawa})}  If $D\subset \R^2$ is a bounded John domain with John constant $c_J \geq 7/8$ then all positive harmonic functions in $D$ are in $L^1(D)$.

\item {\rm (\cite[Thm.~2]{Aikawa})}  If $D\subset \R^2$ is a bounded Lipschitz domain with constant $\lambda<1$ then all positive harmonic functions in $D$ are in $L^1(D)$.

\item
There exists a bounded Lipschitz domain $D$ with constant $\lambda=1$ and a positive harmonic function $h$ in $D$ which is not in $L^1(D)$.
\end{enumerate}
\end{theorem}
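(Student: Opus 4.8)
The proof breaks up into the three parts, and parts (i) and (ii) require nothing new: they are exactly Theorems~1 and~2 of \cite{Aikawa}, applied to the relevant class of domains (bounded John domains with John constant $\ge 7/8$, respectively bounded Lipschitz domains with constant $\lambda<1$). So the only real content is part (iii), where the plan is to write down a single explicit counterexample showing that the hypothesis $\lambda<1$ in (ii) cannot be weakened to $\lambda\le 1$. I would take $D$ to be the open quarter-disc $D=\{re^{i\phi}:0<r<1,\ 0<\phi<\pi/2\}$ and the harmonic function $h(re^{i\phi})=r^{-2}\sin 2\phi$.

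Checking that $h$ does the job is immediate. Since $h=-\Im(z^{-2})$ and $z\mapsto -z^{-2}$ is analytic on $\C\setminus\{0\}\supset D$, $h$ is harmonic in $D$; it is strictly positive there because $\sin 2\phi>0$ for $\phi\in(0,\pi/2)$; and, in polar coordinates,
\[
\int_D h(z)\,dz=\int_0^{\pi/2}\!\!\int_0^1 r^{-2}\sin(2\phi)\,r\,dr\,d\phi=\Bigl(\int_0^{\pi/2}\sin 2\phi\,d\phi\Bigr)\Bigl(\int_0^1\frac{dr}{r}\Bigr)=\infty,
\]
so $h\notin L^1(D)$.

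The substantive step — and the one I expect to need the most care — is verifying that $D$ is a bounded Lipschitz domain with Lipschitz constant exactly $1$, i.e.\ Lipschitz with constant $1$ but with no smaller constant. On the interiors of the two radial segments $\prt D$ is a straight line, and on the interior of the circular arc it is a smooth curve whose tangent turns at a bounded rate; choosing a sufficiently small uniform $\delta>0$ (possible by a compactness argument on the compact set $\prt D$), all such points are described by graphs of Lipschitz functions of constant $\le 1$ (in fact arbitrarily small). At each of the three corners $0$, $1$, $i$, the domain is locally congruent to the wedge $W=\{z:0<\arg z<\pi/2\}$; rotating $W$ by $\pi/4$ turns it into $\{(y_1,y_2):y_2>|y_1|\}$, which is precisely the region lying above the graph of the Lipschitz-$1$ function $y_1\mapsto|y_1|$. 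Hence $D$ is Lipschitz with constant $1$. Conversely, $D$ cannot be Lipschitz with constant $\lambda<1$: if in some orthonormal frame $W=\{y_2>\psi(y_1)\}$ with $\psi$ Lipschitz, then, since $\psi$ is single-valued, the two edges of $W$ must be graphs of $\psi$ over $\{y_1>0\}$ and over $\{y_1<0\}$, with slopes $m_2$ and $m_1$ respectively; the interior opening angle $\pi/2$ forces the two edge directions to be orthogonal, which translates to $m_1m_2=-1$, so $\max(|m_1|,|m_2|)\ge 1$ and thus the local Lipschitz constant at that corner is at least $1$. This elementary geometric sharpness fact is really the only nontrivial ingredient; everything else is routine bookkeeping.

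For completeness I would also remark that one could instead take $D$ to be a square and $h$ a Martin kernel at one of its corners, which again has opening angle $\pi/2$; but the quarter-disc is preferable because the offending positive harmonic function is exhibited in closed form, so no appeal to Martin-boundary theory is needed.
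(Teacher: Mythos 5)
Parts (i) and (ii) are pure citations to Aikawa, exactly as in the paper, so everything hinges on (iii), and there your verification that the quarter-disc is Lipschitz with constant $1$ has a genuine gap. The claim that at the corners $1$ and $i$ the domain is ``locally congruent to the wedge $W$'' is false: at those points a straight radial segment meets the circular arc, and the arc curves \emph{into} the right-angle wedge, because the domain lies inside the circle. Work at the corner $1$. Since the two one-sided tangent lines there are orthogonal, the only frame in which the boundary can possibly be a $1$-Lipschitz graph with the domain above it is (up to reflections) the one with $e_2$ along the interior bisector, $e_1=e^{i\pi/4}$, $e_2=e^{3i\pi/4}$; any other choice gives one of the two tangents slope strictly larger than $1$ already at the corner, and taking $e_1$ along the bisector fails because vertical lines then meet the domain in bounded intervals. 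In that frame the boundary is the graph $t=\psi(s)$ with $\psi(s)=|s|$ for $s\le 0$ (the segment), while the circle becomes $\sqrt2\,(s-t)+s^2+t^2=0$, i.e. $\psi(s)=s+\sqrt2\,s^2+O(s^3)$ for $s\ge 0$. Hence $\psi'(s)=1+2\sqrt2\,s>1$ for $s>0$, so on every neighborhood of the corner the best Lipschitz constant is $1+2\sqrt2\,\delta+o(\delta)$: the quarter-disc is Lipschitz with every constant $\lambda>1$ but not with $\lambda=1$ in the sense of the definition stated just before the theorem, so your main example does not meet the hypothesis as you claim, and the congruence argument is wrong at two of the three corners (it is fine at $0$).

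The repair is immediate and keeps your explicit harmonic function: take $D=(0,1)^2$. At each corner of the square the domain genuinely is congruent to the straight right-angle wedge, so $D$ is Lipschitz with constant exactly $1$; and $h(re^{i\phi})=r^{-2}\sin 2\phi=-\Im(z^{-2})$ is harmonic and positive on the whole open first quadrant, hence on the square, with $\int_D h=\infty$ because $D$ contains the unit quarter-disc at the corner $0$, so your computation applies verbatim --- no Martin-boundary theory is needed, contrary to your closing remark. With this fix your argument is correct and in fact more elementary and explicit than the paper's, which takes $D=F(D_*)$ with $F(z)=\sqrt{1-z}$ and for $h$ the transported Poisson kernel $\Re\bigl((1+z)/(1-z)\bigr)$, checking divergence over an approximate cone at $z=1$; the mechanism (a borderline right-angle corner plus a positive harmonic function blowing up like $r^{-2}$ there) is identical. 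In fairness, the paper's own domain has the same borderline feature --- both boundary arcs of $F(D_*)$ curve into its corner at $F(1)=0$, so strictly it too is Lipschitz only with every constant $\lambda>1$ --- but your justification via local congruence is incorrect as written, and the square sidesteps the issue entirely.
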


\section{Proofs}\label{sec:proofs}

\begin{proof}[\bf Proof of Theorem \ref{j25.1}]

(i) This part is a special case of \cite[Thm.~2.6]{HLS}.

(ii)
Let $X$ be the unique pathwise solution of \eqref{j13.1}.
Then by It\^o's formula,
$f(X_t) - \frac 12 \int_0^t \Delta f (X_s) ds$
is a submartingale under $\P_z$ for every $z\in \ol D$ and $f\in \calC$.
Thus, in view of (i), $(X, \P_z)$ is the unique solution to the submartingale problem
\eqref{f3.1}.

(iii)  This part is known, see, e.g., \cite{KKY}. For the reader's convenience, we give a sketch of the Dirichlet form approach to the construction of ORBM.  The argument
given below works in higher dimensions as well.
In $C^2$-smooth domains with $C^2$-smooth reflection angle,
  it is enough to construct ORBM  locally nearly the boundary and then patch
  the pieces together.
Thus by locally flattening the boundary,  we may and do assume
that $D=\HH$, the upper half space.
Let $\bv (x)= (v_1 (x), 1)$ for $x\in \partial \HH$ with $v_1(x):=\tan \theta (x)$.
Consider a non-symmetric bilinear form $(\calE, \calF)$ on $L^2(\HH, dz)$, where
\begin{eqnarray*}
\calF &=& \left\{ f\in L^2(\HH, dz): \nabla f \in L^2(\HH, dz) \right\}, \\
\calE (f, g)&=& \int_{\HH} \nabla f(z) \cdot \nabla g (z) dz -\int_{\prt \HH} v_1 (x)
\frac{\partial f(x, 0)}{\partial x} g(x, 0) dx \quad \hbox{for } f, g\in \calF.
\end{eqnarray*}
 Let $\calE^0(f, g)=\int_{\HH} \nabla f(z) \cdot \nabla g (z) dz$, and for $\alpha >0$,
 $$
 \calE^0_\alpha (f, g) := \calE^0(f, g)) + \alpha (f, g)_{L^2(\HH; dz)} \quad \hbox{and}
 \quad   \calE_\alpha (f, g) := \calE (f, g)) + \alpha (f, g)_{L^2(\HH; dz)}.
 $$
Observe that for $f \in C_c^2 (\bar \HH)$, by the  integration by parts formula,
$$  \left| \int_{\prt \HH} v_1 (x)
\frac{\partial f(x, 0)}{\partial x} f(x, 0) dx \right|  =  \frac12 \left| \int_{\prt \HH}   v_1' (x)
f(0, x)^2 dx \right| \leq \frac12 \|   v_1' \|_\infty \| f(x, u)\|_{L^2(\partial \HH, dx)}^2.
$$
By the boundary trace theorem, for every $\eps >0$ there is $C_\eps >0$ such that
$$
\| f(x, u)\|_{L^2(\partial \HH, dx)}^2 \leq \eps \calE^0 (f, f)+ C_\eps \| f \|_{L^2(\HH; dz)}^2
\quad \hbox{for } f \in \calF.
$$
It follows from the above two displays that there are constants $\alpha>0$ and $C_0\geq 1$ such that
$$ C_0^{-1} \calE^0_1(f, f)\leq \calE_\alpha (f, f) \leq C_0 \calE^0_1(f, f)
$$
for every $f\in C_c^2(\ol \HH)$ and hence for every $f\in \calF$.
On the other hand, for $f, g \in C_c^2 (\bar \HH)$,
\begin{eqnarray}
&&  - \int_{\prt \HH} v_1 (x)  \frac{\partial f(x, 0)}{\partial x} g(x, 0) dx  \nonumber \\
&=&  - \int_{\prt \HH} v_1 (x)  \int_0^\infty \frac{\partial }{\partial y} \left( \frac{\partial f(x, y)}{\partial x} g(x, y) \right) dy dx
\nonumber  \\
&=&  - \int_{ \HH} v_1 (x)   \frac{\partial f(x, y)}{\partial x} \frac{\partial g(x, y) }{\partial y}   dy dx
- \int_{ \HH} v_1 (x) g(x, y)   \frac{\partial^2 f(x, y)}{\partial x \partial y} dy dx  \nonumber \\
&=&     \int_{ \HH} v_1 (x)  \left(  \frac{\partial f(x, y)}{\partial x} \frac{\partial g(x, y) }{\partial y} -
\frac{\partial f(x, y)}{\partial y} \frac{\partial g(x, y) }{\partial x}  \right)  dy dx  \nonumber \\
&&  -  \int_{ \HH}   v_1' (x) g(x, y) \frac{\partial g(x, y)}{\partial y} dy dx.
\label{e:4.1a}
\end{eqnarray}
Thus, with $C_1=2\|v\|_\infty,+\|v'\|_\infty $,
$$ \left| \int_{\prt \HH} v_1 (x)  \frac{\partial f(x, 0)}{\partial x} g(x, 0) dx \right|
\leq C_1 \calE^0_1 (f, f)^{1/2} \calE^0_1(g, g)^{1/2} \quad \hbox{for } f, g \in C_c^2(\bar \HH).
$$
Hence, the bilinear form $(\calE, \calF)$ satisfies the sector condition: there is a constant $C_2\geq 1$ such that
$$
| \calE(f, g) |\leq C_2 \calE_\alpha (f, f)^{1/2}  \calE_\alpha (g, g)^{1/2}
\quad \hbox{for } f, g \in \calF.
$$
Moreover, by increasing the value of $\alpha$ if needed, we have from \eqref{e:4.1a} that for every $f\in C_c^2(\bar \HH)$,
$$ \calE   (f, f-(0\vee f )\wedge 1 ) \geq 0 \qquad \hbox{and} \qquad \calE_\alpha  (f-(0\vee f )\wedge 1, f) \geq 0 .
$$
Thus $(\calE, \calF)$ is a regular non-symmetric Dirichlet form on $L^2(\bar \HH; dz)$.
Let $X$ be the Hunt process on $\bar \HH$ associated with $(\calE, \calF)$.
Then one can use  stochastic analysis for  non-symmetric Dirichlet forms to show that
$X$ satisfies the SDE \eqref{j13.1} for quasi-every starting point $x\in \bar \HH$
(see \cite{KKY}).
Since $X$ behaves like Brownian motion inside $\HH$, we can refine the result
to allow $X$ to start from every point $x\in \HH$
and conclude that \eqref{j13.1} holds for such $X$.
\end{proof}

\medskip

\begin{proof}[\bf Proof of Theorem \ref{f3.2}]

(i) This part of our theorem is a special case of \cite[Thm.~2.18]{HLS}.

\smallskip

(ii) Almost sure continuity of $X$ follows from \eqref{j13.1}.

Recall that we are assuming that $\theta: \prt D_* \to (-\pi/2, \pi/2)$ and $\theta \in C^2$. It follows from \cite[Cor. II.3.3]{GarMar}
that $h$ is $C^{2-\eps}$  on $\ol D_*$ for every $\eps>0$.

Let $Q$ denote the probability measure on $D_*$ with density $h(z)$.
 We will show that
\begin{equation}\label{e:4.1}
 \E_Q \left[\int_0^1 g(X_s) dL_s \right]= \int_{\prt D_*} g(x) (h(x)/2) dx
\end{equation}
for every continuous function $g$ on $\prt D_*$.
Fix any continuous function $g$ on $\prt D_*$. Its harmonic extension to $\ol D_*$ (also denoted $g$) is continuous on $\ol D_*$.
Then for $\eps\in(0,1)$,
\begin{equation}\label{e:4.2}
\E_Q \left[\int_0^1 \frac1\eps \bone _{\{1-\eps < |X_s|<1\}} g(X_s) ds\right] = \int_{D_*} \frac1\eps \bone _{\{1-\eps < |z|<1\}} g(z) h(z)dz.
\end{equation}
By continuity and boundedness of $g$ and $h$, the limit of the right hand side, as $\eps \to 0$, is equal to
$\int_{\prt D_*} g(x) h(x) dx$. It is standard to show that $\int_0^1 \frac1\eps \bone _{\{1-\eps < |X_s|<1\}} g(X_s) ds$ converges to
$2\int_0^1 g(X_s) dL_s$ in distribution as $\eps \to 0$. We claim that the family
\begin{align}\label{d8.2}
\left\{\int_0^1 \frac1\eps \bone _{\{1-\eps < |X_s|<1\}}  g(X_s) ds, \eps \in (0,1/2)\right\}
\end{align}
is uniformly integrable.
Since $g$ is bounded, it   suffices to prove uniform integrability of
the family
$\left\{\int_0^1 \frac1\eps \bone _{\{1-\eps < |X_s|<1\}}  ds, \eps
  \in (0,1/2)\right\}$. If we denote by $\calL^a_t$ the local time of
the  two-dimensional Bessel process on $[0,1]$ reflected at 1, then the distribution of
$\int_0^1 \frac1\eps \bone _{\{1-\eps < |X_s|<1\}}  ds$ is the same as $\frac1\eps \int_{1-\eps}^1 \calL^a_1 da$. The last random variable
is stochastically majorized by $\sup\{\calL^a_1: a\in [1/2, 1]\}$ for every $\eps \in (0,1/2)$. A version of the Trotter and Ray-Knight theorems shows that $\calL^a_1$ is a diffusion in $a$, so $\sup\{\calL^a_1: a\in [1/2, 1]\}$ is an almost surely finite random variable. Therefore,
the family in \eqref{d8.2} is uniformly integrable.
Taking $\eps\to 0$ in \eqref{e:4.2} yields \eqref{e:4.1}.
It follows  that
the Revuz measure of $L$ is $\frac12h(x)dx$ on $\prt D_*$, relative to the invariant measure $h(z)dz$ on $D_*$.

We will now provide a representation of $X$ using a map which is
locally conformal.
Let $D_- =\{z\in \C: \Re z <0\}$ be the left half-plane and  $f(z) = \exp(z)$  the exponential function that 
maps $D_-$  onto  $D_*\setminus\{0\}$.
For $x\in \prt D_-$ such that $f(x) = z\in \prt D_*$, define 
$\wh\bv(x) = i \tan \theta (z) - 1$. Note that $\wh \bv (x)$ is a periodic $C^2$-smooth function on $\partial D_-$ with period 
$2\pi i$. 
Suppose that $\wh x_0 \in \ol D_-$ and $\wh B$ is a two-dimensional Brownian motion.
It is known (see \cite[Theorem 4.3]{LS}) that  there is a pathwise unique solution $(\wh X, \wh L)$ to the following  Skorokhod SDE,
\begin{align}\label{n29.1}
\wh X_t = \wh x_0 + \wh B_t + \int_0^t \wh\bv(\wh X_s) d\wh L_s,
\end{align}
where $\wh X$ is a continuous process that takes values in $\overline D_-$ and $\wh L$
is a continuous non-decreasing real-valued process with $\wh L_0=0$
that increases only when $\wh X_t \in
\partial D_-$. 
The process $\wh X$ is an ORBM in $D_-$ with the oblique angle of reflection 
$\theta\circ f$.
The It\^o formula yields
\begin{align}\label{n29.2}
f(\wh X_t) &= f(\wh X_0) + \int_0^t f'(\wh X_s) d\wh B_s
+ \int_0^t f'(\wh X_s)\wh\bv(\wh X_s) d\wh L_s \\
&= f(\wh X_s) + \int_0^t f'(\wh X_s) d\wh B_s
+ \int_0^t \bv_\theta(f(\wh X_s)) d\wh L_s, \nonumber
\end{align}
where $f'$ is interpreted as the Jacobian of $f$.
Let
\begin{align}\label{n29.3}
c(t)= \int_0^t |f'(\wh X_s )|^2 ds.
\end{align}
It is not hard to show that $c(t) < \infty$ for every $t>0$, a.s. It follows that 
$$ c^{-1}(t):=\inf\{s>0: c(s)>t\}
$$
is well defined for every $t>0$ and 
the process $X_t := f(\wh X_{c^{-1}(t)})$
satisfies \eqref{j13.1}
with  Brownian motion $B_t := \int_0^{c^{-1}(t)} f'(\wh X_s) d\wh B_s$ and $L := \wh L$.
So $X$  is an ORBM in $D_*$ with the oblique angle of reflection $\theta$.
The exponential function $f(z)= \exp (z): D_- \to D_*$ is neither
one-to-one nor onto $D_*$, but it is locally conformal and maps
$\partial D_-$
onto $\partial D_*$ so we will 
refer to the fact that $X_t$ is an ORBM  as conformal invariance of
ORBM. 

Let $\sigma_t = \inf\{s\geq 0: L_s > t\} = \wh \sigma_t = \inf\{s\geq 0: \wh L_s > t\}$, $ A_t = \arg X_{\sigma_{t}}$ and
$ \wh A_t = \Im \wh X_{\wh\sigma_{t}}$
for $t\geq 0$. Then $\wh A$ and $A$ are indistinguishable processes.

It follows from the uniqueness of the deterministic Skorohod problem
that  the process $\bar X_t := \wh X_t - i\int_0^t \tan \theta(\wh
X_s) d \wh L_s$ is a normally reflected Brownian motion in the left half-plane $D_-$. Hence,
if we let $ C_t = \Im \bar X(\wh\sigma_{t})$ for $t\geq 0$,
then $C_t$ is a Cauchy process with the initial value $C_0=\Im \bar X_{\bar S}=\arg X_S$, where
$\bar S := \inf\{t> 0: \bar X_t \in \prt D_-\}$ and
$S := \inf\{t> 0: X_t \in \prt D_*\}$.
Clearly, $C_0$ depends only on the initial starting point of $X$ and is independent of the reflection angle $\theta$.
We have
\begin{align}\label{n13.1}
A_t = \wh A_t = C_t +
\int_0^{\wh \sigma_t} \tan(  \theta(\wh X_s)) d \wh L_s
= C_t +
\int_0^{\sigma_t} \tan(  \theta( X_s)) d  L_s.
\end{align}
For $u\geq 0$, define
\begin{equation}\label{e:4.8}
 T_u = \inf\{t >u : X_t \in \prt D_*\},
\end{equation}
 with the convention
$\inf \emptyset := \infty$.
We obtain from \eqref{n13.1},
\begin{align*}
\arg X_t &= A_{L_t} + \arg X_t - \arg X_{T_t}
 \\
&= C_{L_t} +
\int_0^{t} \tan(  \theta( X_s)) d  L_s
+ \arg X_t - \arg X_{T_t}   \\
&= C_t + (C_{L_t} - C_t) +
\int_0^{t} \tan(  \theta( X_s)) d  L_s
+ (\arg X_t - \arg X_{T_t})   .
\end{align*}
Hence,
\begin{eqnarray}\label{n12.9}
\frac1t  \arg X_t - \mu_0
  &= & \frac1t C_t + \frac1t (C_{L_t} - C_t)  +
\left(\frac1t \int_0^{t} \tan(  \theta( X_s)) d L_s -\mu_0 \right)  \nonumber \\
& &  + \frac1t (\arg X_t - \arg X_{T_t})   .
\end{eqnarray}

By \eqref{e:4.1}, $\E_Q [L_1] =\int_{\partial D_*} (h(x)/2) dx = 1$.
It follows from these remarks, \eqref{ju21.22},
\eqref{e:4.1} with $g(x)=\tan \theta (x)$, and
the limit-quotient theorem for additive functionals (see, e.g., \cite[Thm. X 3.12]{RevYor}) that for every $z\in \ol D_*$, $\P_z$-a.s.,
\begin{align}\label{n12.6}
 \lim_{t\to \infty} \frac1t
\int_0^{t} \tan(  \theta( X_s)) d  L_s
&= \E_Q \left[\int_0^1 \tan \theta (X_s) dL_s\right]
 = \int_{\prt D_*} (1/2) \tan \theta(x) h(x) dx
=\mu_0,    \\
& \lim_{t\to \infty} \frac1t
L_t = 1.    \label{n12.7}
\end{align}

Fix an arbitrarily small $\eps >0$ and any $z \in \ol D_*$ and let
\begin{align}\label{feb19.10}
p_1(t) = \P_{z}(|\arg X_t - \arg X_{T_t}| > \eps t).
\end{align}
We will argue that $p_1(t)$ is small for large $t$.
Let
 $T'_u = \sup\{t \in [0, u]: X_t \in \prt D_*\}$ with the convention $\sup \emptyset = 0$.
By the Markov property applied at time $t$ and the symmetry of Brownian motion,
\begin{align*}
\P_{z}\left(\arg X_{T'_t} - \arg X_t >0\right)
=\P_{z}\left(\arg X_{T'_t} - \arg X_t <0\right)
=1/2.
\end{align*}
This and the Markov property applied at time $t$ imply that
\begin{align}\label{n12.1}
\P_{z}\left(|\arg X_{T'_t} - \arg X_{T_t}| > \eps t\right)
\geq p_1(t)/2.
\end{align}
For a fixed $u>0$, the Cauchy process $C$ is continuous at time $u$, a.s.
Let $\delta >0$ be so small that
\begin{align*}
\P\left(\sup_{1-\delta \leq u,v \leq 1+\delta}
|C_u - C_v| \geq \eps/2\right) < \eps.
\end{align*}
Then, by scaling, for any $t >0$,
\begin{align}\label{n12.3}
\P\left(\sup_{(1-\delta)t \leq u,v \leq (1+\delta)t}
|C_u - C_v| \geq \eps t/2\right) < \eps.
\end{align}
By \eqref{n12.7}, we can find $t_1$ so large that for
$t\geq t_1$,
\begin{align}\label{n12.2}
\P_{z}(L_t \in((1-\delta)t, (1+\delta)t) ) \geq 1-\eps.
\end{align}
The jumps of $A$ have the same size as those of $C$ and occur at the same time because the last integral in \eqref{n13.1} is a continuous function of $t$.
If the events in \eqref{n12.1} and \eqref{n12.2} occur then
$C$ has a jump of size greater than $\eps t$ at a time $s = L_t \in ((1-\delta)t, (1+\delta)t)$.
The probability of this event is greater than $p_1(t)/2 -\eps$, by \eqref{n12.1} and \eqref{n12.2}. However, by \eqref{n12.3}, this probability is less than $\eps$. Hence, $p_1(t)/2 < 2\eps$ and, therefore, $p_1(t) < 4 \eps$ for sufficiently large $t$.
This and \eqref{feb19.10} imply that for sufficiently large $t$,
\begin{align}\label{n12.4}
\P_{z}\left(\frac1t |\arg X_t - \arg X_{T_t}| > \eps \right)
< 4\eps.
\end{align}

Another consequence of \eqref{n12.3} and \eqref{n12.2} is that $|C_{L_t} - C_t| \leq \eps t$ with probability greater than $1-2\eps$ for large $t$.
Thus, for sufficiently large $t$,
\begin{align}\label{n12.5}
\P_{z}\left(\frac1t |C_{L_t} - C_t| > \eps \right)
< 2\eps.
\end{align}
It follows from \eqref{n12.6} that
for sufficiently large $t$,
\begin{align}\label{n12.8}
\P_{z}\left(\left|\frac1t \int_0^{t} \tan(  \theta( X_s)) d L_s -\mu_0\right| > \eps \right)
< \eps.
\end{align}
 Note that $(C_t-C_0)/t$ has the Cauchy distribution.
Since $\eps>0$ is arbitrarily small, the last observation, \eqref{n12.9}, \eqref{n12.4}, \eqref{n12.5} and \eqref{n12.8}  imply that the distributions of
$\frac1t \arg X_t - \mu_0$ converge to the Cauchy distribution as $t\to \infty$.

\smallskip

(iii)
Consider a modification of the process $C$ which is left continuous with right limits.
For $t\geq 0$, let
$$
\Lambda_t  = \sum_{s\leq t} (C_{t+} - C_t) \bone_{\{ |C_{t+} - C_t| > 2 \pi\}}, \qquad
C^*_t   = C_t -C_0- \Lambda_t=C_t-\arg X_S -\Lambda_t.
$$
The process $C^*$ is a Cauchy process with jumps larger than $2\pi$ removed and starts from $C^*_0=0$.
It is elementary to see that $C^*_t $ is a zero mean martingale and a L\'evy process. Hence, the law of large numbers holds for $C^*$, that is, a.s.,
\begin{align}\label{n13.5}
\lim_{t\to \infty} C^*_t/t = 0.
\end{align}
Note that the jumps removed from $C$ correspond to increments of $\arg X$ in the sum on the right hand side of
 \eqref{n11.1}.
Thus \begin{align}\label{d11.10}
\arg^* X_{\sigma(t)} = C^*_t +
\int_0^{\sigma(t)} \tan(  \theta( X_s)) d L_s + \arg X_S ,
\end{align}
and
\begin{align}\label{d11.8}
\frac1t \arg^* X_{\sigma(t)} = \frac1t C^*_t +
\frac1t \int_0^{\sigma(t)} \tan(  \theta( X_s)) d L_s
+\frac1t  \arg X_S .
\end{align}
It follows from \eqref{n12.7} that, a.s., \begin{align}\label{feb19.15}
\lim_{t\to\infty} \sigma(t)/t = 1.
\end{align}
This, \eqref{n12.6},  \eqref{n13.5} and \eqref{d11.8} imply that for every $z\in \ol D_*$, $\P_z$-a.s.,
\begin{align}\label{feb19.20}
\lim_{t\to\infty}
\frac1t \arg^* X_{\sigma(t)} = \mu_0.
\end{align}

We claim  that
\begin{align}\label{feb19.14}
\lim_{t\to\infty} T_t/t = 1, \quad \hbox{a.s.}
\end{align}
First note that since $\int_0^\infty 1_{\{X_s\in D_*\}} dL_s =0$, we have by
\eqref{n12.7} that $\lim_{t\to \infty} T_t=\infty$.
For every $\eps >0$, $L_t-\eps <L_{T_t} \leq L_t$  so
$$ \frac{L_t-\eps }{t} <\frac{L_{T_t}}{T_t} \frac{T_t}{t} \leq \frac{L_t}{t}.
$$
This together with \eqref{n12.7} establishes the claim \eqref{feb19.14}.
Combining \eqref{feb19.15}, \eqref{feb19.20} and \eqref{feb19.14} yields
\begin{align}\label{n13.6}
\lim_{t\to\infty}
\frac1t \arg^* X_{T_t} = \mu_0.
\end{align}

Next we will argue that \eqref{n13.6} implies that $\lim_{t\to\infty} \frac1t
\arg^* X_{t} = \mu_0$ by using excursion theory. Recall that $H^x$ denotes the excursion law for Brownian motion in $D_*$. We will estimate the $H^x$-measure of the family $F_a$ of excursions with the property that $|\arg \exc(0) - \arg \exc(\zeta-)| \leq 2 \pi$ and $\sup_{t\in[0,\zeta(\exc))}|\arg \exc(0) - \arg \exc(t)| \geq a$, for $a \geq 4\pi$.
Note that this quantity does not depend on $x$.
Let $\wh H^x$ be the excursion law for Brownian motion in $D_- =\{z\in \C: {\rm Re} z<0\}$  starting from $x\in \prt D_-$.
Excursion laws are conformally invariant, up to a multiplicative constant (see
\cite[Prop. 10.1]{BurBook}). 
The exponential function $f(z) = \exp(z)$ maps $D_-$ onto $D_* \setminus \{0\}$
and is locally conformal, up to the boundary.  
Hence, for some constant $c_4$, $H^x(F_a) = c_4 \wh H^y(\wh F_a)$, where $\wh F_a$ is the family of excursions with the property that $|\Im \exc(0) - \Im \exc(\zeta-)| \leq 2 \pi$ and $\sup_{t\in[0,\zeta(\exc))}|\Im \exc(0) - \Im \exc(t)| \geq a$.
If we normalize all excursion laws as in \eqref{eq:M5.2} then it is easy to check that $c_4 = 1$ (although our argument does not depend on the value of this constant).
Thus, the equality $H^x(F_a) =  \wh H^y(\wh F_a)$ holds for all $x\in \prt D_*$ and $y\in \prt D_-$.
By \cite[Thm. 5.1(v)]{BurBook}, for some $c_5<\infty$,
\begin{align}\label{n13.8}
\wh H^x\left(\sup_{t\in[0,\zeta(\exc))}|\Im \exc(0) - \Im \exc(t)| \geq a\right) \leq c_5 /a.
\end{align}
It is easy to see that if Brownian motion starts in $D_-$ from a point $z$ with $|\Im z| > a$ with $a\geq 4\pi$ then the chance that it will exit $D_-$ through the line segment on the imaginary axis between $-2\pi i$ and $2\pi i$ is bounded above by $c_6/a$. This, \eqref{n13.8} and the strong Markov property of $\wh H^x$ applied at the time $\inf \{ t\in[0,\zeta(\exc)):  |\Im \exc(0) - \Im \exc(t)| \geq a\}$
imply that
\begin{align}\label{n13.9}
H^x(F_a) = \wh H^x\left(\wh F_a\right) \leq c_5c_6 /a^2 = c_7 /a^2.
\end{align}
Fix some $\alpha >0$.
By the exit system formula \eqref{exitsyst},
the probability that there exists an excursion $\exc_t$ of $X$ such that $L_t>s$ and $\exc_t$ belongs to $F_{\alpha L_t}$ is equal to
\begin{align*}
\int_{s}^\infty  H^{X_{\sigma(u)}}(F_{\alpha u}) du
\leq \int_{s}^\infty c_7 / (\alpha u)^2 du
= c_8 /(\alpha^2 s).
\end{align*}
This quantity goes to 0 as $s\to \infty$, so for every fixed $\alpha >0$, with probability 1, there is $s_\alpha=s_\alpha (\omega) < \infty$ such that there are no
excursions $\exc_t\in F_{\alpha L_t}$ with $L_t>s_\alpha$.

Fix an arbitrarily small $\alpha>0$ and
suppose that $t_1$ is so large that $\frac1t \arg^* X_{T_t} \leq \mu_0+\alpha$ and $L_t/t \leq 2$
for all $t>t_1$.
 If $\frac1u \arg^* X_{u} \geq \mu_0+5\alpha$
for some $u> t_1$ then $ |\arg^* X_{u} - \arg^* X_{T_u}| \geq 4\alpha u \geq 2 \alpha L_u$. This means that an excursion starting at $T_u$ belongs to $F_{2\alpha L_u}= F_{2\alpha L_{T_u}}$. Since there are no such excursions beyond some $s_{2 \alpha}$, it follows that
$\limsup_{t\to \infty}\frac1t \arg^* X_{T_t} \leq \mu_0+5\alpha$, a.s. This holds for all rational $\alpha>0$ simultaneously, a.s., so
$\limsup_{t\to \infty}\frac1t \arg^* X_{T_t} \leq \mu_0$, a.s.
The matching lower bound for $\liminf$ can be proved analogously. We conclude that for every $z\in D_*$, $\P_z$-a.s.,
$\lim_{t\to\infty} \frac1t \arg^* X_{T_t} = \mu_0$.

The proof of \eqref{j16.1} will be combined with the proof of Theorem \ref{j15.7} (iv) given below.

(iv)
Since $h$ is $C^2$ on $\ol D_*$, it follows from \eqref{ju20.3} that $\theta(z)$ is $C^2$
on $\ol D_*$, and hence $\theta(x)$ is $C^2$ on $\prt D_*$. Moreover $H=h+i\wt h$
is $C^{2-\eps}$, by Corollary II.3.3 in \cite{GarMar}. By assumption, $h$ is
positive and continuous on $\prt D_*$. Thus $H(\ol D_*)$ is a compact
subset of $\{\Re z >0\}$
and so by \eqref{ju20.3}, $\sup_x |\theta(x)| < \pi/2$. We can now apply parts (i) and (iii) of the theorem to see that part (iv) holds.
\end{proof}

\medskip

\begin{proof}[\bf Proof of Theorem \ref{j15.5}]
Fix a Borel measurable function $\theta: \prt D_* \to [-\pi/2, \pi/2]$. First we need to prove that there exists a sequence of $C^2$ functions $\theta_k: \prt D_* \to (-\pi/2, \pi/2)$
which converges to $\theta$ in weak-* topology.  For this,
we extend $\theta$ harmonically to $\ol D_*$ and then we
let $\theta_k(e^{it}) = \theta(e^{it} (1-1/k))$.  Then $\theta_k$'s converge
to $\theta$ in weak-* topology. See \cite[page 33]{Hoff}.

\medskip
(i) This was essentially proved in \cite[Thm. 1.1]{BurMar}. That theorem was concerned with ORBM in a half-plane while the present result is set in a disc. Theorem \ref{j15.5}(i) can be proved just like \cite[Thm. 1.1]{BurMar} by repeating the arguments given in \cite{BurMar} with some minor adjustments. We omit the proof to save space. The Markov property of $X$ follows from that of $X^k$ and the convergence of finite dimensional distributions.
Since for each $k$, the subprocess of $X^k$ before hitting $\prt D_*$ is Brownian motion in $D_*$ before hitting $\prt D_*$,
the same claim applies to the subprocess of $X$ before hitting $\prt D_*$.

The transition probabilities are the same for each process $|X^k|$ so the process $|X|$ has the same transition probabilities. It follows that $X$ is conservative.

\medskip
(ii) This claim was shown in the proof of \cite[Thm. 1.1]{BurMar} although
it was not a part of the statement of that theorem. See Step 4 on page 214
of \cite{BurMar}.

\medskip
(iii)  Suppose that $(h_k, \mu_k) \lra \theta_k$ and $X^k$ solves the SDE
\eqref{n19.1} except that the initial distribution for $X^k$ is the
stationary distribution $h_k(z)dz$. According to Remark \ref{ju20.5}, the measures $h_k(z)dz$ converge to $h(z)dz$. It is easy to see that part (i) of this theorem implies that $X^k$'s converge weakly to a process $X$ satisfying the SDE \eqref{j13.1}, with the initial distribution $h(z)dz$.
For every $t\geq 0$ and $k\geq 1$, the distribution of $X^k_t$ is $h_k(z) dz$. Hence, for every $t\geq 0$, the distribution of $X_t$ is $h(z) dz$. This shows that $h$ is a stationary distribution for $X$ satisfying \eqref{j13.1}.

We next show
uniqueness of the stationary distribution.
As observed in  \eqref{e:2.31},
for every reflection angle field $\theta$, the radial part $|X|$ of $X$ is a two-dimensional Bessel process confined to $[0,1]$ by reflection at $1$.
This easily implies that for any initial distribution of $X$, the distribution of $X_1$ has a strictly positive density inside $\ball(0,1/2)$.
If there were more than one invariant measure, at
least two of them (say, $Q_1$ and $Q_2$) would be mutually
singular by Birkhoff's ergodic theorem \cite{Sinai}.
We have just shown that the Lebesgue measure restricted to
$\ball(0,1/2)$ (let us call it $Q_3$) is absolutely continuous with respect to
the distribution of $X_1$, so that in particular, $Q_3 \ll Q_1$ and $Q_3 \ll Q_2$. Since $Q_1 \perp Q_2$ by assumption,
there exists a set $A \subset \ball(0,1/2)$ such that $Q_1(A) = 0$ and $Q_2(\ball(0,1/2) \setminus A) =
0$. Therefore, one must have $Q_3(A) = Q_3(\ball(0,1/2) \setminus A) = 0$ which
contradicts the fact that $Q_3(\ball(0,1/2))\ne 0$.

\medskip
(iv) The first claim
follows easily from the definitions.
The second claim follows from the first claim  and part (i) of the theorem.

\medskip
(v) Since $\theta_k$ are smooth, \eqref{n12.9} holds for $X^k$'s, that is,
\begin{eqnarray}\label{n22.1}
\frac1t \arg X^k_t - \mu_k
  &=& \frac1t C^k_t + \frac1t (C^k_{L^k_t} - C^k_t)  +
\left(\frac1t \int_0^{t} \tan(  \theta_k( X^k_s)) d L^k_s -\mu_k \right) \nonumber \\
&&   + \frac1t (\arg X^k_t - \arg X^k_{T^k_t}) ,
 \end{eqnarray}
where the symbols with the superscript or subscript $k$ denote objects analogous to those in \eqref{n12.9}.
Since $X^k$'s converge to $X$ weakly, we can assume that all these processes are
constructed on a single probability space and $X^k_t \to X_t$, a.s., for every fixed $t$, as $k\to \infty$. In view of \eqref{n22.1}, we can write
\begin{align}\label{feb22.1}
\frac1t &\arg X_t - \mu_0
=
\left(\frac1t \arg X_t -
\frac1t \arg X^k_t \right) - (\mu_0 -\mu_k)
   + \frac1t  (C^k_t -C^k_0)
 + \frac1t (C^k_{L^k_t} - C^k_t)  \nonumber \\
& \quad +
\left(\frac1t \int_0^{t} \tan(  \theta_k( X^k_s)) d L^k_s -\mu_k \right)
+ \frac1t (\arg X^k_t - \arg X^k_{T^k_t})
+\frac 1t \arg X^k_{S^k} ,
\end{align}
where $S^k=\inf\{t>0: X^k_t\in \partial D_*\}$.
The distribution of $\frac1t (C^k_t-C^k_0)$ is Cauchy for every $k$ and $t$ so it
suffices to show that all other terms on the right hand side of \eqref{feb22.1}
are small for large $t$ and $k$.

Fix an arbitrarily small $\eps>0$.
Note that \eqref{n12.4} and \eqref{n12.5} do not depend on $\theta$ so we can apply them for all $\theta_k$.
Hence, we can find $t_1$ so large that for $t\geq t_1$,
\begin{align*}
\P\left ( \left|\frac1t \left(C^k_{L^k_t} - C^k_t\right)
+ \frac1t \left(\arg X^k_t - \arg X^k_{T^k_t}\right)
\right| \geq \eps \right) < \eps.
\end{align*}

We will assume without loss of generality that $X^k_0 = z \ne 0$, a.s., for all $k$.
(The case $z=0$ can be dealt with by applying the Markov property at time $t=1$.)
 Then $\arg X^k_{S^k}$ has the same distribution for each $k\geq 1$ and
so by taking $t_1$ larger if needed,
\begin{align*}
\P\left ( \left|\frac 1t \arg X^k_{S^k}
\right| \geq \eps \right) < \eps, \qquad \hbox{for all } k\geq 1 \hbox{ and } t\geq t_1.
\end{align*}
Recall that $X^k_t \to X_t$, a.s. By Remark \ref{ju20.5} (vi), $\mu_k \to
\mu_0$. Thus,
for a fixed $t$, we can make $k$ so large that
\begin{align*}
\P\left ( \left|\frac1t \arg X_t -
\frac1t \arg X^k_t \right| +|\mu_0 -\mu_k|
 \geq \eps \right) < \eps.
\end{align*}

Hence, it will suffice to prove that for a fixed $\eps>0$, some $t_1$ and $k_1$, all  $t\geq t_1$, $k\geq k_1$ and  $z_k\in \ol D_*$,
\begin{align}\label{n27.1}
\P_{z_k}\left(\left|\frac1t \int_0^{t} \tan(  \theta_k( X^k_s)) d L^k_s -\mu_k\right| > \eps \right)
< \eps.
\end{align}

If we let $Q_k(dx) =h_k(x)dx$ then by \eqref{n12.6},
\begin{align}\label{n27.2}
\E_{Q_k}\left[ \frac1t \int_0^{t} \tan(  \theta_k( X^k_s)) d L^k_s \right] =\mu_k.
\end{align}
Hence, to finish the proof of part (iv) of the theorem, it will suffice to show that
\begin{align}\label{d8.1}
\Var\left(
\frac1t \int_0^{t} \tan(  \theta_k( X^k_s)) d L^k_s\right)
\leq c_1 /t.
\end{align}
We will split the rest of the proof of this part of the theorem into steps.

\textit{Step 1}. We will recall some results from \cite[Lemmas 2.2-2.3]{BurMar} but we will change the notation.

We will say that $D\subset \C$ is a monotone domain if $D$ is open, connected and for every $z\in D$ and $b>0$ we have $z+ib \in D$.

Let $\HH =\{z\in \C: \Im z >0\}$ be the upper half-plane. Suppose that
$\theta: \prt \HH  \to [-\pi/2, \pi/2]$
is a Borel measurable function
and suppose $\theta$ is not equal almost everywhere either to $\pi/2$ or to $-\pi/2$. Then there exists a univalent
analytic mapping $g$ of $\HH $ onto a monotone domain $D=g(\HH)$ such that
for almost all $x\in \prt \HH $,
~$g(x)$ and $g'(x)$ exist,
$g'(x)\ne 0$ and $\arg g'(x) = \theta (x)$.
We choose $g$ so that
$\lim_{|z|\to\infty} |g(z)| =\infty$.
We construct $g$ as follows.
Let
$\theta: \HH  \to \R$
be the bounded harmonic extension of our original function
$\theta: \prt \HH  \to [-\pi/2, \pi/2]$
and let $\wt \theta$ be the harmonic conjugate of $\theta$
such that $\wt\theta(i)=0$.
Define
$g: \HH  \to\C$ by setting $g(i)=i$ and
$$g'(z) = \exp (i(\theta(z) + i \wt\theta(z))) .$$
Then $g$ is one-to-one on $\HH$ because $\Re g'(z) > 0$. (See
\cite{BurMar}).
By abuse of notation, we will use the same symbol $\theta$ to denote
real functions on both $\prt D_*$ and $\prt \HH $. Specifically, for
$z\in \prt \HH $, we let $\theta(z) = \theta(\exp(iz))$, where
$\theta(\exp(iz))$ refers to the function $\theta\in \calT$ introduced in the assumptions of Theorem \ref{j15.5}.
Hence, in this proof, $\theta: \prt \HH  \to \R$ is a periodic function
with period $2\pi$. It follows that $g$ is also periodic with period
$2\pi$, up to an additive constant. That is,
$g(z+2\pi)=g(z)+d$ for all $z\in \HH $,  where $d=g(i+2\pi)-g(i)$ . The
constant $d$ is non-zero since $\Re g'> 0$.

Suppose that
$\theta_k:\prt D_* \to (-\pi/2, \pi/2)$
are $C^2 $-functions which converge weak-*to $\theta$
as $k\to\infty$. Let $g_k$ and $D_k:=g_k (\HH)$ correspond to
$\theta_k$ in the same way as $g$ and $D=g(\HH)$ correspond to $\theta$.
Note that
$g_k(z+2\pi)=g_k(z)+d_k$ for some constant $d_k$.
Moreover if $\eps >0$, then $g_k(z+i \eps)$ converges to $g(z+i\eps)$ uniformly
in $z\in \R$ and $d_k \to d$.
Indeed, by weak-* convergence of $\theta_k\in \calT$,
we conclude uniform convergence of $\theta_k(z)+i \wt \theta(z)$
on the compact set
$\{z: |z|=e^{-\eps}\}$, and hence $g_k'$ converges uniformly on
$I=\{z: 0 \le \Re z \le 2\pi, \Im z=\eps\}$. Integration then shows
that $d_k\to d$
and hence $g_k$ converges uniformly to $g$ on $\R + i\eps$.
Let $f(z) = \exp (i g^{-1}(z ))$, for $z\in D$ and $f_k(z) = \exp (i
g_k^{-1}(z ))$ for $z\in D_k$. Then $f$ and $ f_k$ are locally conformal maps of $D$
and $ D_k$ onto $D_*\setminus \{0\}$  which are periodic with periods $d$ and $ d_k$, respectively.

The monotone domains $D_k$
converge to $D$ in the following sense.
\begin{enumerate}[\rm (a)]
\item
If $B$ is open and such that $B\cap \prt D\ne\emptyset$, there is a
$k_0=k_0(B)$ such that $B\cap \prt D_k \ne \emptyset $ for all
$k\geq k_0$.

\item  If $B$ is connected and open, with $B\cap D\ne\emptyset$ and
$B\subset D_k$ for infinitely many $k$, then $B\subset D$.

\item  If $K$ is compact and $K\subset D$ then $K\subset D_k$ for
all $k\geq k_0 = k_0(K)$.
\end{enumerate}

\medskip

We invoke conformal invariance of  ORBM  as in
\eqref{n29.1}-\eqref{n29.3}.
For $x\in \prt D_k$ such that $f_k(x) = z\in \prt D_*$, let
$\wh\bv_k(x) = i \sec \theta_k (z)$. In other words, $\wh\bv_k$ is the conformal (inverse) image of
the vector of reflection $\bv_{\theta_k}$.
Suppose that $\wh B$ is a two-dimensional Brownian motion and consider the Skorokhod SDE
\begin{align}\label{n29.4}
\wh X^k_t = \wh x_k + \wh B_t + \int_0^t \wh\bv_k(\wh X^k_s) d\wh L^k_s,
\end{align}
where $\wh L^k$ is the local time of $\wh X^k$ on $\prt D_k$.
The process $\wh X^k$ is reflected Brownian motion in $D_k$ with the oblique angle of reflection $\theta_k$.
If
$c_k(t)= \int_0^t |f'_k(\wh X^k_s)|^2 ds$
then the process $X^k_t = f_k(\wh X^k_{c_k(t)})$
is reflected Brownian motion in $D_*$ with the oblique angle of reflection $\theta_k$.

Let $K_k = f_k^{-1} \left(\prt \ball(0,1/2)\right)$. Note that
$K_k $ is the image under the map $g_k$  of  the horizontal line $\{z:\Im z=\ln 2\}$, and
so $K_k$ is an analytic curve.
Let $a_k = \Re d_k=\Re(g_k(2\pi) - g_k(0))$ and for $z\in \prt D_k$, let $R_k(z) = \{x\in \prt D_k: |\Re x - \Re z| \geq a_k\}$.
Let $\wh T^k(A) = \inf\{t\geq 0: \wh X^k_t \in A\}$.
We will show that for every $\theta$ there exists $p_1>0$ such that for every approximating sequence $\{\theta_k\}$ there exists $k_1$ such that for any $k\geq k_1$ and $z_k \in \prt D_k$,
\begin{align}\label{n29.5}
\P_{z_k}( \wh T^k(K_k) < \wh T^k(R_k) ) \geq p_1.
\end{align}

Let $[x,z]$ denote the line segment between $x,z\in\C$.
For every $\theta$ there exist $a,b\in(0,\infty)$ such that for every approximating sequence $\{\theta_k\}$ there exists $k_1$ such that for any $k\geq k_1$ and $z\in \prt D_k$ we have $a_k \geq a$ and $K_k \cap [z, z+ ib] \ne \emptyset$.

With probability greater than $p_2>0$, Brownian motion starting from 0 will hit the line $\{z: \Im z = 2 b\}$ before hitting the lines $\{z: |\Re z| = a/2\}$, and then it will cross the imaginary axis before hitting any of the lines
$\{z: |\Re z| = a\}$ or $\{z: \Im z = b\}$. Since $\int_0^t \wh\bv_k(\wh X^k_s) d\wh L^k_s$ is a purely imaginary number with non-negative imaginary part, this implies that with probability greater than $p_2$, the process $\wh X^k$ starting from $z_k\in \prt D_k$
will hit the line $\{z: \Im z - \Im z_k = 2 b\}$ before hitting the lines $\{z: |\Re z - \Re z_k| = a/2\}$, and then it will cross the line $\{z: \Re z = \Re z_k\}$ before hitting any of the lines $\{z: |\Re z -\Re z_k| = a\}$ or $\{z: \Im z - \Im z_k = b\}$. If the trajectory of
$\wh X^k$ follows a path described above then, in view of the definitions of $a$ and $b$, it will cross $K_k$ before hitting $R_k$. We conclude that \eqref{n29.5} holds with  $p_1=p_2>0$.

Let
\begin{align*}
T^k(A) & = \inf\{t\geq 0:  X^k_t \in A\},\\
\Tkb & = T^k(\ball(0,1/2)),\\
T^k_* & = \inf\{t\geq 0: X^k_t \in \prt D_*, |\arg X^k_t - \arg X^k_0| \geq 2\pi\}.
\end{align*} 
By the conformal invariance of ORBM,
\eqref{n29.5} implies that
\begin{align}\label{n29.6}
\P_{z_k}(  \Tkb <  T^k_* ) \geq p_1,
\qquad \text{  for all  }k \text {  and  } z_k \in \prt D_*.
\end{align}

\emph{Step 2}. We will estimate the variance of
$ \int_0^{1} \tan(  \theta_k( X^k_s)) d L^k_s$.

Let $S_1^k = T^k(\prt D_* \cup \prt \ball(0,1/2))$. The probability that Brownian motion will make a loop in the annulus $ D_* \setminus  \ball(0,1/2)$ (that is, $\arg X^k$ will increase or decrease by $2\pi$) before hitting the boundary of the annulus is less than $p_3<1$. This implies that, for any $z\in \ol D_*$,
\begin{align}\label{d1.1}
\P_z\left(|\arg X^k_{S_1^k} - \arg X^k_0| \leq 2 \pi
\right) \geq 1-p_3 .
\end{align}
This and an easy inductive argument based on the strong Markov property applied at the times when consecutive loops are completed shows that there exists $n$ so large that for any $z\in \ol D_*$,
\begin{align}\label{d1.7}
\P_z\left(|\arg X^k_{S_1^k} - \arg X^k_0| \geq n 2 \pi
\right) \leq p_1/4 ,
\end{align}
where $p_1$ is as in \eqref{n29.6}.
Fix such an $n$ and let
\begin{align*}
S^k_2 &= \inf\{t\geq 0:
|\arg X^k_t - \arg X^k_0| \geq (n+1)2 \pi\},\\
S^k_{3} &= \inf\{t\geq S_2^k:
|\arg X^k_t - \arg X^k_{S^2_k}| \geq n 2 \pi\},\\
S^k_{4} &= \inf\{t\geq 0:
|\arg X^k_t - \arg X^k_0| \geq (2n+1) 2 \pi\},\\
S^k_{5,j} &= \inf\{t\geq 0:
|\arg X^k_t - \arg X^k_0| \geq j(2n+2) 2 \pi\}.
\end{align*}
By \eqref{n29.6}, we have for $z\in \prt D_*$,
\begin{align*}
\P_{z}(  \Tkb \leq  T^k_* \land S_2^k )
+
\P_{z}( S_2^k \leq \Tkb \leq  T^k_* )
\geq p_1.
\end{align*}
It follows that either
\begin{align}\label{d1.10}
\P_{z}(  \Tkb \leq  T^k_* \land S_2^k )
\geq p_1/2,
\end{align}
or
\begin{align}\label{d1.11}
\P_{z}( S_2^k \leq \Tkb \leq  T^k_* )
\geq p_1/2.
\end{align}
Suppose that the last estimate holds.
By \eqref{d1.7} and the strong Markov property applied at $S^k_2$,
\begin{align*}
\P_{z}( S_2^k \leq S^k_3 \leq \Tkb \leq  T^k_* )
\leq p_1/4,
\end{align*}
so, in view of \eqref{d1.11},
\begin{align*}
\P_{z}( S_2^k \leq \Tkb \leq S^k_3 \land  T^k_* )
\geq p_1/4.
\end{align*}
It follows from this and \eqref{d1.10} that
\begin{align*}
\P_{z}(  \Tkb \leq S^k_3  )
\geq p_1/4,
\end{align*}
and, therefore, for $z\in \prt D_*$,
\begin{align*}
\P_{z}(  \Tkb \leq S^k_4  )
\geq p_1/4.
\end{align*}
We combine this with \eqref{d1.1} using the strong Markov property at $S^k_1$ to see that for $z\in \ol D_*$,
\begin{align*}
\P_{z}(  \Tkb \leq S^k_{5,1}  )
\geq (1-p_3) p_1/4 =: p_4 >0.
\end{align*}
Applying the strong Markov property repeatedly at $S^k_{5,j} $'s, we see that for $z\in \ol D_*$ and $j\geq 1$,
\begin{align*}
\P_z\left(\Tkb \geq S^k_{5,j}
\right) \leq (1-p_4)^j .
\end{align*}
In other words,
\begin{align}\label{d1.13}
\P_z\left(
|\arg X^k_{\Tkb} - \arg X^k_0|
 \geq j(2n+2) 2\pi
\right) \leq (1-p_4)^j .
\end{align}

Let $X^0$ be the ORBM corresponding to $\theta\equiv 0$.
It is easy to see that
\begin{align*}
\arg X^k_t - \arg X^k_0
-  \int_0^{t} \tan(  \theta_k( X^k_s)) d L^k_s
\end{align*}
has the same distribution as
$\arg X^0_t - \arg X^0_0$. The estimate \eqref{d1.13} applies to $X^0$; to prove that, one can apply the same argument as the one for $X^k$'s or a direct elementary proof. Since
\begin{align*}
 & \int_0^{t} \tan(  \theta_k( X^k_s)) d L^k_s\\
&= \big( \arg X^k_t - \arg X^k_0\big)
- \left( \arg X^k_t - \arg X^k_0 -  \int_0^{t} \tan(  \theta_k( X^k_s)) d L^k_s\right),
\end{align*}
and \eqref{d1.13} applies to both quantities within parentheses, we obtain for $z\in \ol D_*$ and $j\geq 1$,
\begin{align*}
&\P_z\left(
\left| \int_0^{\Tkb} \tan(  \theta_k( X^k_s)) d L^k_s \right|\geq 2j(2n+2) 2\pi
\right) \\
&\leq
\P_z\left(
\left| \arg X^k_{\Tkb} - \arg X^k_0 \right|\geq j(2n+2) 2\pi
\right) \\
&\qquad + \P_z\left(
\left| \arg X^k_{\Tkb} - \arg X^k_0
-  \int_0^{\Tkb} \tan(  \theta_k( X^k_s)) d L^k_s \right|\geq j(2n+2) 2\pi
\right) \\
&\leq 2(1-p_4)^{j} .
\end{align*}
This implies that for some $c_2<\infty$ and all $z\in \ol D_*$ and all $k$,
\begin{align}\label{d1.30}
\E_z
\left[ \left| \int_0^{\Tkb} \tan(  \theta_k( X^k_s)) d L^k_s \right|^3\right]\leq c_2 .
\end{align}
Let $V_0=U_1 = 0$, and for $m\geq 1$,
\begin{align*}
V_m &= \inf\{t\geq U_m: X^k_t \in \ball(0,1/2)\},\\
U_{m+1} &=  \inf\{t\geq V_{m}: X^k_t \notin \ball(0,3/4)\}.
\end{align*}
Since $\P(U_{m+1} - V_{m} > 1\mid \calF_{V_m}) > p_5>0$, we have
\begin{align}\label{d1.20}
\P(U_m \leq 1 ) \leq c_3 (1-p_5)^m.
\end{align}
Note that the local time $L^k$ does not increase on intervals $[V_m, U_{m+1}]$. Hence
\begin{align}\label{d1.21}
 \int_0^{1} \tan(  \theta_k( X^k_s)) d L^k_s
=
\sum_{m=1}^\infty
 \int_{U_m\land 1}^{V_m\land 1} \tan(  \theta_k( X^k_s)) d L^k_s ,
\end{align}
and, therefore,
\begin{align*}
&\left| \int_0^{1} \tan(  \theta_k( X^k_s)) d L^k_s \right|^3
=
\left|\sum_{m=1}^\infty
 \int_{U_m\land 1}^{V_m\land 1} \tan(  \theta_k( X^k_s)) d L^k_s \right|^3\\
&\leq
3\sum_{m=1}^\infty \sum _{i\leq m} \sum _{j\leq m}
\left|\bone_{\{U_m < 1\}}
 \int_{U_m\land 1}^{V_m\land 1} \tan(  \theta_k( X^k_s)) d L^k_s \right| \cdot
\left| \bone_{\{U_i < 1\}}
 \int_{U_i\land 1}^{V_i\land 1} \tan(  \theta_k( X^k_s)) d L^k_s \right|\\
&\qquad\times \left| \bone_{\{U_j < 1\}}
 \int_{U_j\land 1}^{V_j\land 1} \tan(  \theta_k( X^k_s)) d L^k_s \right|\\
&\leq
3 \sum_{m=1}^\infty \sum _{i\leq m} \sum _{j\leq m}
\Bigg[\bone_{\{U_m < 1\}}
\left| \int_{U_m\land 1}^{V_m\land 1} \tan(  \theta_k( X^k_s)) d L^k_s \right|^3 +
\bone_{\{U_i < 1\}} \left|
 \int_{U_i\land 1}^{V_i\land 1} \tan(  \theta_k( X^k_s)) d L^k_s \right|^3 \\
&\qquad +
\bone_{\{U_j < 1\}} \left|
 \int_{U_j\land 1}^{V_j\land 1} \tan(  \theta_k( X^k_s)) d L^k_s \right|^3\Bigg].
\end{align*}
This, \eqref{d1.30} and \eqref{d1.20} imply that
for some $c_4<\infty$, all $z\in \ol D_*$ and all $k$,
\begin{align}\label{d1.22}
\E_z\left[\left| \int_0^{1} \tan(  \theta_k( X^k_s)) d L^k_s \right|^3\right]
\leq
3 \sum_{m=1}^\infty \sum _{i\leq m} \sum _{j\leq m}
3 c_3(1-p_5)^m c_2 < c_4.
\end{align}

\medskip
\emph{Step 3}.
For a fixed $z\in \ol D_*$ and all $k$, the processes $\{|X^k_t|, t\geq 0\}$ have the same distribution, that of 2-dimensional Bessel process on $[0,1]$, reflected at 1.
Hence, $\P_z\left( |X^k_{1/2}| < 1/4\right) > p_6$, where $p_6$ does not depend on $z\in \ol D_*$ and $k$. This and the Markov property at time $1/2$ can be used to show that the density of the distribution of $X^k_1$ under $\P_z$ is greater than $c_5>0$ on $\ball(0,1/2)$, where $c_5$
does not depend on $z\in \ol D_*$ and $k$.

Let $\P^k_x$ denote the
distribution of the process $X^k$ starting from $x$.
Consider $z\in \ol D_*$. We will construct a process $X^k$ with distribution $\P^k_z$ in a special way.
First we will construct i.i.d. random vectors $A_1, A_2, A_3, \dots$ The distribution of each $A_j$ is
partly continuous, with
density $c_5$ in $\ball(0,1/2)$.
With probability $1- c_5 \pi /4$,
$A_j$ takes value $\bDelta$ (the cemetery state).
Let $q^k_1$ be the density of $X^k_1$  under the
distributions $\P^k_z$. Let $ B_1$ be a random
vector with density $ q^k_1(x) - c_5\bone_{\ball(0,1/2)}(x)$
on $D_*$. With
probability $c_5\pi/4$, $B_1$ takes
value $\bDelta$. We construct $ B_1$ so that it is equal to $\bDelta$ if and only if
$A_1 \ne \bDelta$.
Moreover, we make the conditional distribution of $B_1$ given $\{B_1 \ne \bDelta\}$ independent of $A_j$'s.

In the following construction, the expression ``Markov
bridge'' will refer to the Markov bridge corresponding to
$\P^k$. If $A_1 \in \ball(0,1/2)$ then we let $\{ X^k_t,
0\leq t\leq 1\}$ be the Markov bridge between the points in
time-space $(0,z)$ and $(1,A_1)$. If $A_1
=\bDelta$ then we let $\{ X^k_t, 0\leq t\leq 1\}$ be the
Markov bridge between the points  $(0,z)$ and $(1, B_1)$,
otherwise independent of $A_j$'s and $B_1$.

We continue by induction. Suppose that $\{ X^k_t, 0\leq
t\leq n\}$ has been
defined. Let
$ q^k_{n+1}( X^k_n, x)$ be the density of $X^k_1$ under the distribution
$\P^k_{ X^k_n}$. Let $B_{n+1}$
be a random vector with density $ q^k_{n+1}( X^k_n, x) -
c_5\bone_{\ball(0,1/2)}(x)$ on $D_*$. With probability
$c_5\pi/4$, this random vector takes value $\bDelta$.
We construct $ B_{n+1}$ so that
it is equal to $\bDelta$ if and only if $A_{n+1} \ne
\bDelta$.
Moreover, we make the conditional distribution of $B_{n+1}$ given $\{B_{n+1} \ne \bDelta\}$ independent of $A_j$'s and $\{ X^k_t, 0\leq
t\leq n\}$, except that it has the density $ q^k_{n+1}( X^k_n, x) -
c_5\bone_{\ball(0,1/2)}(x)$ on $D_*$.

If $A_{n+1} \in \ball(0,1/2)$ then we let $\{X^k_t, n\leq t\leq n+1\}$ be the Markov bridge between the
points in time-space $(n, X^k_n)$ and $(n+1,A_{n+1})$,
otherwise independent of
$A_j$'s and $\{ X^k_t, 0\leq
t\leq n\}$. If $A_{n+1}
=\bDelta$ then we let $\{ X^k_t, n\leq t\leq n+1\}$ be the
Markov bridge between $(n, X^k_n)$ and $(n+1, B_{n+1})$, otherwise independent of
$A_j$'s and $\{ X^k_t, 0\leq
t\leq n\}$.
It is easy to check that this inductive construction yields a process $\{ X^k_t, t\geq 0\}$ with distribution $\P^k_z$.

Let $\Gamma^k_n = \int_n^{n+1} \tan(\theta_k( X^k_s)) d
 L^k_s$. Let $\bfA_n = \bigcup_{j=1}^n \{A_n \ne
\bDelta\}$ and note that $\P(\bfA_n^c) = (1- c_5 \pi /4)^n =:
c_6^n$, where $c_6 < 1$. If $\bfA_n$ holds then the trajectory of
$\{ X^k_t, n\leq t\leq n+1\}$ does not depend on $
X^k_1$. Hence, $\Cov(\Gamma^k_1, \Gamma^k_n \bone_{\bfA_n}  )
=0$. We have,
\begin{align*}
\Cov(\Gamma^k_1, \Gamma^k_n)
&= \Cov(\Gamma^k_1, \Gamma^k_n \bone_{\bfA_n} + \Gamma^k_n \bone_{\bfA^c_n} )
=
\Cov(\Gamma^k_1, \Gamma^k_n \bone_{\bfA_n}  )
+
\Cov(\Gamma^k_1,  \Gamma^k_n \bone_{\bfA^c_n} )\\
&= \Cov(\Gamma^k_1,  \Gamma^k_n \bone_{\bfA^c_n} )
=
\E(\Gamma^k_1 \Gamma^k_n \bone_{\bfA^c_n} ) - \E \Gamma^k_1 \E (\Gamma^k_n \bone_{\bfA^c_n} ),
\end{align*}
so, in view of \eqref{d1.22}, for some $c_{10}<1$,
\begin{align*}
|\Cov(\Gamma^k_1, \Gamma^k_n)|
&\leq
(\E|\Gamma^k_1|^3)^{1/3} (\E|\Gamma^k_n|^3)^{1/3}
(\E\bone_{\bfA^c_n}^3)^{1/3}
+ \E \Gamma^k_1 (\E (\Gamma^k_n)^2)^{1/2}
(\E \bone_{\bfA^c_n}^2)^{1/2}\\
&\leq c_7 c_4^{n/3} + c_8 c_4^{n/2}
\leq c_9 c_{10}^n.
\end{align*}
It is easy to see that the estimate applies also to $n=1$ (possibly with new values of the constants).
This implies that
\begin{align*}
&\Var\left( \int_0^{n} \tan(  \theta_k( X^k_s)) d L^k_s\right)
 \\
&= \sum _{i=1}^n\sum _{j=1}^n
\Cov \left( \int_{i-1}^{i} \tan(  \theta_k( X^k_s)) d L^k_s,
\int_{j-1}^{j} \tan(  \theta_k( X^k_s)) d L^k_s\right)\\
&\leq \sum _{i=1}^n\sum _{j=1}^n c_9 c_{10}^{|i-j|}
\leq c_{11} n.
\end{align*}
It is elementary to check that the estimate also applies with non-integer upper limit, that is, for any $t>1$,
\begin{align*}
&\Var\left( \int_0^{t} \tan(  \theta_k( X^k_s)) d L^k_s\right)
\leq c_{11} t.
\end{align*}
This completes the proof of \eqref{d8.1} and hence the proof of part (v) of the theorem.

\medskip

(vi) The claim follows from the ergodic theorem if we show that under the stationary distribution $h(x)dx$,
\begin{align}
\E_h \left[\barg^* X_1 \right]=\mu_0 . \label{d11.3}
\end{align}
Recall that
\begin{align}\label{d11.4}
 \lim_{k\to \infty} \mu_{k} = \mu_0 .
\end{align}
Theorem \ref{f3.2} (iii) implies that
\begin{align}
\E_{h_k} \left[\arg^* X^k_1 \right] =\mu_{k} . \label{d11.5}
\end{align}
It follows easily from definitions of $\arg^*$ and $\barg^*$, and Theorem \ref{j15.5}(iv) that $\arg^* X^k_1 \to \barg^* X _1 $ in distribution. Hence, in view of \eqref{d11.4}-\eqref{d11.5}, the proof of \eqref{d11.3} will be complete if we prove that the family $\{\arg^* X^k_1\}_{k\geq 1}$ is uniformly integrable.

The following formula can be derived in the same way as
 \eqref{n12.9} has
been derived,
\begin{align}\label{d11.9}
\arg^* X^k_{1} =  C^*_{L^k_1} +
\int_0^{1} \tan(\theta_k( X^k_s)) d L^k_s
 + \left(\arg X^k_1 - \arg X^k_{T^k_1}\right) .
\end{align}
Here $C^*$ is a Cauchy process with jumps larger than $2\pi$ removed.

 Recall that
 $S^k=\inf\{t>0: X^k_t\in \partial D_*\}$ and $T^k_1=\inf\{t>1: X^k_t\in \partial D_*\}$. So by the Markov property of
 $X^k$, under the stationary measure $h_k(x)dx$, $\{X^k_s-X^k_1; 1\leq s\leq T^k_1\}$ has the same distribution
 as that of $\{X^k_s - X^k_0; 0 \leq s\leq S^k\}$.
It follows from the paragraph following \eqref{e:2.31} that
under the stationary measure  $h_k(x)dx$,
$Y^k=|X^k|$ is a stationary 2-dimensional Bessel process in $(0, 1]$ reflected at 1.
Let   $\sigma_k (a, b]=\int_{\{a<|x|\leq b\}} h_k(x) dx$. Then $\sigma_k (dr)$
 is the stationary probability distribution of $Y^k$ so it is independent of $k$.
This and the rotational invariance of Brownian motion imply that the distribution of
$\arg X^k_0 - \arg X^k_{S^k}$ does not depend on $k$.
By an earlier remark, the distribution of $\arg X^k_1 - \arg X^k_{T^k_1}$ is the same so it
does not depend on $k$ either. Hence, the family
$\left\{\arg X^k_1 - \arg X^k_{T^k_1}\right\}$, $k\geq 1$, is uniformly integrable.
The distribution of $L^k_1$ does not depend on $k$ so the same applies to $C^*_{L^k_1}$. Random variables
$\int_0^{1} \tan(\theta_k( X^k_s)) d L^k_s $
are uniformly integrable by \eqref{d1.22}.
All these remarks taken together with \eqref{d11.9} show that the family $\{\arg^* X^k_1\}_{k\geq 1}$ is uniformly integrable. This completes the proof of part (vi) of the theorem.

\medskip

(vii)  An explicit integral test was given in \cite{BM}:
The ORBM in $D_+=\{z: \Im z > 0\}$ with angle of
reflection $\theta$ hits $0$ with positive probability if and only if
\begin{align}\label{sept6.3}
\int_0^1 \frac{1}{y} \Re \exp \left(i\left(\theta(iy)+i \wt
\theta(iy)\right)\right) dy < \infty,
\end{align}
where $\theta(z)$ is the bounded harmonic extension of $\theta$ to
$D_+$ and $\wt \theta$ is the harmonic conjugate of $\theta$ vanishing
at $z=i$. In \cite{BM} there was the added assumption that $\theta \in
C^{1+\eps}$, for some $\eps>0$, except possibly at $0$.
As noted in \cite{BurMar}, the same result holds if we only assume
$\theta$ is measurable and $|\theta| \le \pi/2$.
One way to transfer this result to $\theta\in \calT$ is to set
$\theta_1(t)=\theta(e^{it})$, for $t\in \R$, and
$\theta_1(z)=\theta(e^{iz})$ for $z \in D_+$ as before.
Then
\begin{align*}
\int_0^1 \frac{1}{y} \Re \exp \left(i\left(\theta_1(iy)+i \wt
\theta_1(iy)\right)\right) dy
=\int_0^1\frac{1}{y} \Re
\exp\left( i(\theta+i\wt\theta)(e^{-y})\right)  dy.
\end{align*}
Setting $r=e^{-y}$, we have $y=\ln 1/r \sim 1-r$ on $[e^{-1},1]$ and so ORBM
hits $1$ with positive probability in $D_*$ if and only if
the left-hand side of \eqref{sept6.1} is finite for $x=1$.
By \eqref{ju20.4},
\begin{align*}
1/(h+i \wt h -i\mu_0/\pi)= \pi \cos \theta(0)e^{i(\theta+i \wt
\theta)}
\end{align*}
and by taking real parts, the two integrals in \eqref{sept6.1} are
equal.

Suppose that for some $z_0 \in D_*$ and $x\in \partial D_*$, $\P_{z_0} (x\in \Gamma^\theta_X)>0$. 
A simple coupling argument shows that for some $r>0$ and $p>0$,
$\P_{z} (x\in \Gamma^\theta_{X[0, 1]})\geq p$
for all $z\in \ball(z_0,r) \subset D_*$.  
Since for every $k\geq 1$, $X_t$ returns to $\ball(z_0,r)$ for some $t\geq k$ with probability one,  
we have by this ``renewal property" that 
$\P_z (x\in \Gamma^\theta_X)=1$ for all $z\in \ball (z_0, r)$.

\medskip
(viii) Let $\rho$ denote the Prokhorov distance between
probability measures (\cite[App. III]{Bill}). For any
stochastic processes $V $ and $Z$, we will write $\rho(V,Z)$ to
denote the distance between their distributions relative to
$M_1$ distance between trajectories. For every $k$, one can find a sequence
$(\theta^n_k)_{n\geq 1}$ of $C^2$ functions with values in $(-\pi/2,\pi/2)$ which converges to $\bar\theta_k$ as
$n\to \infty$ in weak-* topology.
Recall that $\bar X_k$ are defined relative to $\bar \theta_k$ in the same way that $X$ is defined relative to $\theta$. Processes $X^k$ are defined by \eqref{n19.1} relative to $\theta_k$.
By part
(i) of the theorem, one can find a sequence $\theta^{n_k}_k: \prt D_* \to (-\pi/2, \pi/2)$ with the
following properties. Let $X^{k,n_k}$ be the solution to
\eqref{n19.1} relative to $\theta^{n_k}_k$. Then $\rho (X^{k,n_k} , \bar
X^k) < 1/k$. Moreover, we can choose $n_k$'s so large that the sequence $(\theta^{n_k}_k)_{k\geq 1}$ converges to $\theta$ in
weak-* topology.
Since the sequence $(\theta_1, \theta_1^{n_1}, \theta_2, \theta_2^{n_2}, \theta_3, \theta_3^{n_3}, \dots)$
converges to $\theta$, the sequence of processes $X^1, X^{1, n_1}, X^2, X^{2, n_2}, X^3, X^{3, n_3}, \dots$
converges in distribution to a process $X'$, by part (i) of the theorem. We must have $X = X'$ in distribution, because
$(\theta_k)_{k\geq 1}$ is a subsequence of
$(\theta_1, \theta_1^{n_1}, \theta_2, \theta_2^{n_2}, \theta_3, \theta_3^{n_3}, \dots)$.
We see that $\rho (X^{k,n_k} , X)
\to 0$. Since $\rho (X^{k,n_k} , \bar X^k) < 1/k$, we
obtain $\rho (\bar X^k , X) \to 0$ as $k\to\infty$.
\end{proof}

\medskip

\begin{proof}[\bf Proof of Proposition \ref{sept11.1}]
The integral in \eqref{sept6.1} is equal to
\begin{align*}
\int_0^1\frac{1}{1-r} \int_{\prt D_*} \frac{1-r^2}{|z-rx|^2}
d\sigma(z) dr
=\int_{\prt D_*} \int_0^1\frac{1+r}{|1-rx\ol z|^2}dr d\sigma(z).
\end{align*}
Let $w=x \bar z$. Then $|w|=1$ and
\begin{align*}
\frac1{|1-rw|^2}=\frac{1}{(1-rw)(1-r\ol w)}=
\frac{1}{\ol w -w} \left( \frac{-w}{1-rw}-\frac{\ol w}{1-r\ol w} \right) .
\end{align*}
So
\begin{align*}
\int_0^1\frac{1}{|1-rw|^2} dr = \frac1{\ol w-w} \ln \frac{1-w}{1-\ol w}
=  \frac{ \arg{(1-w)}}{|1-w| \sin  \arg{(1-w)} }\sim \frac{1}{|1-w|}.
\end{align*}
Thus the integral in \eqref{sept6.1} is finite if and only if \eqref{sept11.2} holds.
\end{proof}

\medskip

\begin{proof}[\bf Proof of Proposition \ref{P:3.10}]
An application of the Riemann mapping theorem shows that it suffices to prove the proposition for $D=D_*$.

(i)
The expected occupation measure for an excursion law $H^x$ is a constant multiple of $K_x(\,\cdot\,)$ by \eqref{jan17.1}.
According to the definition, the ERBM is a ``mixture'' of excursion laws. This easily implies that
the stationary distribution for $X$ has the density that is proportional to
$  \int _{\prt D_*}   K_x(y) \nu(dx) $.

\smallskip
(iii)
The function $h$ has a representation
$h(y) = \int _{\prt D_*} K_x(y) \nu(dx) $.
If one constructs an ERBM corresponding to $\nu$ then
the stationary measure of this process is $h$ by part (i) of the proposition.
\end{proof}

\medskip

\begin{proof}[\bf Proof of Theorem \ref{j18.1}]
(i)
 Since $\lim_{k\to \infty} {\rm dist}(x_k, \partial D_*) = 0$,
every subsequence of $x_k$ contains a further subsequence that converges to some point in $ \partial D_*$.
We will assume that the whole sequence $x_k$ converges to a point $x_\infty \in\partial D_*$. We will show that the limit distribution of $X^k$ does not depend on $x_\infty$. Hence, the result holds for every sequence satisfying $\lim_{k\to \infty} {\rm dist}(x_k, \partial D_*) = 0$.

  As was noted in the paragraph following \eqref{e:2.31},
  for any $r_0\in[0,1]$, $t\geq 0$ and $\theta_1, \theta_2 \in \calT$, if $X^k$ is an ORBM in $D_*$ with the angle of reflection $\theta_k$ and $|X^k_0|=r_0$ for $k=1,2$, then the distributions of $|X^1_t|$ and $|X^2_t|$ are identical. Suppose that $X$ is an ORBM. Then $\P(|X_t| \in [1-\eps, 1]) \leq c\eps$ for some $c$ and all $\eps \geq 0$.
Fix an arbitrary $\eps\in (0,1)$.
Let
$\calE^*_\eps=\{\exc^1, \exc^2,\dots\}$ be the set of
all excursions of $X$ from $\prt D_*$ which enter the ball $\ball(0, 1-\eps)$, ordered according to their starting times. Let $S_n= S_n(\eps) = \inf\{t\geq 0: \exc^n_t \in \ball(0, 1-\eps)\}$. It follows from the rotation invariance of Brownian motion that the distribution of $\{\exp(-i\arg \exc^n_{S_n})  \exc^n_t, t\geq S_n\}$ (the excursion rotated about 0 so that $\exc^n_{S_n}$ is mapped to $1-\eps\in \R$) does not depend on $n$, $\theta$ or the value taken by $S_n$.

Since the process
$\{  \exc^n_t, t\geq S_n\}$
is Brownian motion killed upon hitting of $\prt D_*$, its trajectory has modulus of continuity $c(\omega)\sqrt{2 r |\log r|} $, where $c(\omega)$ is finite for almost all $\omega$ (see \cite[Thm. 2.9.25]{KS}).
If we time-reverse $\exc^n$ and rotate it so that it starts from 0, then it will have the distribution $H^0$ conditioned by $\{\exists t>0: \exc_t \in \ball(0, 1-\eps)\}$.
Hence, the claim about the modulus continuity can be extended as follows.
The modulus of continuity of $\{ \exc^n_t, t\in (0,\zeta)\}$ is $c_1(\omega)\sqrt{2 r |\log r|} $, where $c_1(\omega)$ is finite for almost all $\omega$.
This easily implies that for any sequence of random variables $V_k$ which converges to 0 in distribution, processes
$\{\exp(-i V_k )  \exc^n_t, t\geq 0\}$ converge to $\{  \exc^n_t, t\geq 0\}$
in distribution in the Skorokhod topology as $k\to \infty$. Note that no assumptions on the joint distribution of $V_k$ and $\{ \exc^n_t, t\geq 0\}$
are needed.

Recall that $h_k(0) = 1/\pi$ for any $(h_k,\mu_{0,k})\in \calH$. Hence $\int_{\prt D_*} h_k(x) dx = 2$ and, therefore, $\nu(\prt D_*) =2$. It follows that $\nu/2$ is a probability distribution on $\prt D_*$.

Let $\calE^k_\eps$ be defined relative to $X^k$ in the same way as $\calE^*_\eps$ has been defined relative to a generic $X$. We will suppress both $\eps$ and $k$ in the notation for excursions, i.e., we will write $\calE^k_\eps=\{\exc^1, \exc^2,\dots\}$.
In view of the opening remarks of this proof, it is routine to show that in order to prove  part (i) of the theorem, it is sufficient to show that for any fixed $\eps \in (0,1)$ and $n$, the joint distribution of $(\exc^1_0, \exc^2_0, \dots, \exc^n_0)$ converges to that of a sequence of $n$ i.i.d. random variables with distribution $\nu/2$, as $k\to \infty$.

Let $\sigma^k_t = \inf\{s \geq 0: L^k_s > t\}$ and $A^k_t = \arg X^k_{\sigma^k_t}$, with the convention that $\arg X^k_{\sigma^k_t} \in [0,2\pi)$.
By abuse of notation, we define $\theta_k$ for real $x$ by $\theta_k(x) = \theta_k(e^{i x})$.
 Let $B$ be Brownian motion in $\C$ starting at the origin and $S^k=\inf\{t>0: x_k+B_t\in \partial D_*\}$.
Let $\wh A_0^k=\arg (x_k+B_{S^k})$. Since $x_k\to x_\infty\in \partial D_*$,
$a_0:= \lim_{k\to \infty} \wh A_0^k=\arg x_\infty$ a.s.
Let $C_t$ be a Cauchy process with $C_0=0$ that is independent of $B$,
  and let $\wh A^k_t$ be the solution to the SDE
\begin{align}\label{j4.10}
\wh A^k_t = \wh A^k_0 +  C_t + \int_0^t \tan \theta_k(\wh A^k_s) ds.
\end{align}
Clearly,  $\wh A^k_0$ has the same distribution as $ A^k_0 $.
Let $\bar A^k_t\in [0,2\pi)$ be the unique number such that
$\bar A^k_t = \wh A^k_t + j 2\pi$ for some integer $j$. Then, by the conformal invariance of ORBM's presented in \eqref{n29.1}-\eqref{n29.3},
the distribution of $\{\bar A^k_t, t\geq 0\}$ is the same as
that of $\{ A^k_t, t\geq 0\}$.

To incorporate our assumptions on $h_k$ and $1/h_k$,
we first note that by \eqref{aug27.7} and \eqref{aug27.3}
\begin{align}\label{sept14.1}
\tan\theta_k(z)=\frac{\mu_k(z)}{\pi h_k(z)}=\frac{\mu_{0,k}}{\pi h_k(z)} -
\frac{\wt h_k (z)}{h_k(z)},
\end{align}
for $z\in D_*$.
If $f$ is Lipschitz with constant $\lambda$, then its modulus of continuity
satisfies $\omega_f(\delta)\le \lambda \delta$. By \cite[Thm. III.1.3]{Gar}
the modulus of continuity of $\wt f$ satisfies
\begin{align*}
\omega_{\wt f} (\delta) \le C\lambda \delta ( 1+\log{\pi/\delta}),
\end{align*}
where $C$ is a constant not depending on $f$ or $\delta$. So by
assumption (c),
$\wt
h_k$ are Dini continuous on $\ol D_*$,
with constants depending only on $\lambda$, not $k$. We also conclude that each $\theta_k$ and $\wt
\theta_k$ are Dini continuous on $D_*$, and therefore on $\ol D_*$, by  \eqref{ju20.3}.
In particular,
\eqref{sept14.1} holds for $x\in \prt D_*$.

By a change of variables,
\begin{align}\label{s19.1}
\wh A_{t/\mu_{0,k}}^k  &= \wh A_0^k  + C_{t/\mu_{0,k}} +
\int_0^t \tan \theta_k(\wh A_{r/\mu_{0,k}}^k)\frac{dr}{\mu_{0,k}}\\
&= \wh A_0^k  + C_{t/\mu_{0,k}} -\frac{1}{\mu_{0,k}}
\int_0^t \frac{\wt h_k (A_{r/\mu_{0,k}}^k)}{h_k (A_{r/\mu_{0,k}}^k)} dr
+\frac{1}{\pi}\int_0^t \frac{1}{h_k(A_{r/\mu_{0,k}}^k)} dr.\nonumber
\end{align}

By assumption (d), $h_k(z)=\int K_z(x) h_k(x) |dx|$ converges
to $\int K_z \nu(dx) :\equiv h(z)$, where $K_z$ is the Poisson kernel for
$z\in D_*$. Since each $h_k$ is Lipschitz with
constant $\lambda$ on $\prt D_*$ and therefore on $\ol D_*$,
we have that $|h(z)-h(w)| \le \lambda |z-w|$ for $z,w \in D_*$. Thus $h$
extends to be Lipschitz with constant $\lambda$ on $\ol D_*$ and so
$\nu(dx)=h(x)|dx|$.

Recall from Remark \ref{jan15.1} (iii) that the assumption (c) implies that all functions $1/h_k$ are Lipschitz with the same constant. Without loss of generality, we will assume that the Lipshitz constant for $1/h_k$ is $\lambda$. It follows that $1/h$ is Lipshitz with constant $\lambda$.

Recall that $a_0:= \lim_{k\to \infty} \wh A_0^k=\arg x_\infty$.
By abuse of notation, let $h(x) = h(e^{ix})$ for real $x$ and let $a_t$ be the solution to
\begin{align}\label{s19.2}
a_t = a_0 + \int_0^t \frac 1 {\pi h(a_s)} ds.
\end{align}
Let $t_1$ be such that $a_{t_1} = a_0 + 2\pi$.
Since
\begin{align*}
\frac{\prt}{\prt t} \nu([a_0, a_t])/2
=(1/2)\frac{\prt}{\prt t} \int_{a_0}^{a_t} h(b) db
= (1/2) \frac{h(a_t)}{ \pi h(a_t)} = \frac1{2\pi}
\end{align*}
and $\nu([a_0, a_{t_1}])/2 = \nu([a_0, a_0 + 2\pi])/2 =1$,
we must have $t_1 = 2\pi$.
Hence, for $0\leq s \leq t \leq 2\pi$,
\begin{align}\label{j5.5}
\nu([a_s, a_t])/2 = \frac{t-s}{2\pi}.
\end{align}

It follows from \eqref{s19.1}-\eqref{s19.2} that
\begin{align*}
\wh A_{t/\mu_{0,k}}^k -a_t  = F_t^k
+\frac{1}{\pi}\int_0^t \left(\frac{1}{h(A_{r/\mu_{0,k}}^k)}
-\frac{1}{h(a_r)}\right) dr,
\end{align*}
where
\begin{align}\label{sept15.1}
F_t^k= \wh A_0^k - a_0  + C_{t/\mu_{0,k}} -\frac{1}{\mu_{0,k}}
\int_0^t \frac{\wt h_k (A_{r/\mu_{0,k}}^k)}{h_k (A_{r/\mu_{0,k}}^k)} dr
+\frac{1}{\pi}\int_0^t
\left(\frac{1}{h_k(A_{r/\mu_{0,k}}^k)}-\frac{1}{h(A_{r/\mu_{0,k}}^k)}\right)dr.
\end{align}
Since $1/h$ is Lipschitz with constant $\lambda$,
\begin{align*}
|\wh A_{t/\mu_{0,k}}^k -a_t| \le \sup_{0\le s \le 2\pi} |F_s^k|
+\frac{\lambda}{\pi}\int_0^t \left|\wh A_{r/\mu_{0,k}}^k -a_r\right|dr,
\end{align*}
for $0\le t \le 2\pi$.
By Gr\"onwall's inequality (see \cite{Bell}),
\begin{align}\label{sept14.3}
\left|\wh A_{t/\mu_{0,k}} -a_t\right| \le \left(\sup_{0\le s \le 2\pi}
|F_s^k| \right)  ~e^{\lambda t /\pi}.
\end{align}

We claim that
\begin{align}\label{sept15.5}
\lim_{k\to \infty}\sup_{0\le s \le 2\pi} |F^k_s|=0,
\end{align}
in probability.
By the definition of $a_0$, $\lim_k \wh A_0^k -a_0=0$.
By assumption (a), $\theta_k(0)=\int
\theta_k(x)|dx|/2\pi$ converges to $\pi/2$. But then $\mu_{0,k}=\tan\theta_k(0)$
converges to $+\infty$. Thus \break \hfill
$\sup_{0 \le t \le 2\pi} C_{t/\mu_{0,k}} =0$,
a.s.  Since $\wt h_k$ and $1/h_k$ are Dini continuous on $D_*$ with
constant depending only on $\lambda$,
and $h_k(0)=1/\pi$ and $\wt h_k(0)=0$, we have that $\wt h_k/h_k$ is
bounded  on $\prt D_*$ by a constant independent of $k$. Thus the first integral
in \eqref{sept15.1} also tends to $0$.

 If
$\beta_n(f)$ denotes the $n^{th}$ Cesaro mean of $f$ on $\prt D_*$ then
for continuous $f$, $\beta_n(f)$ converges uniformly on $\prt D_*$ to $f$,
with the difference
$\|\beta_n(f)-f\|_{\infty}$ depending only on the modulus of
continuity of $f$ and $n$.  See
\cite[page 18]{Hoff}.
Since $1/h_k$ and $1/h$ are Lipschitz with constant $\lambda$, given
$\eps>0$ we can choose $n$ so that
\begin{align}\label{sept17.1}
\|1/h_k -\beta_n(1/h_k)\|_\infty < \eps ~\text{ and } ~
\|1/h -\beta_n(1/h)\|_\infty < \eps.
\end{align}
By assumption (d) $h_k$ converges to $h$, uniformly on compact subsets
of $D_*$, and since $1/h_k$ is uniformly bounded, $1/h_k$ converges to
$1/h$ uniformly on compact subsets of $D_*$. Since $1/h_k$ are
uniformly bounded, this also implies $1/h_k$ converges to $1/h$ weak-* and
therefore for $k$ sufficiently large, and $n$ fixed,
\begin{align}\label{sept17.2}
\| \beta_n(1/h_k) -\beta_n(1/h)\|_{\infty} < \eps.
\end{align}
By \eqref{sept17.1}, \eqref{sept17.2}, and the triangle inequality,
$1/h_k$ converges uniformly to $1/h$.
We conclude that
the second integral in \eqref{sept15.1} tends to $0$ as well, proving
the claim.

We will need a generalization of the above results \eqref{sept14.3} and
\eqref{sept15.5}.
Let $D_u =\{z\in \C: \Im z >0\}$ be the upper half-plane.
Let $H^x$ be the excursion law for Brownian motion in $D_*$, for excursions starting from $x\in \prt D_*$ and
let $\wh H^x$ be the excursion law for Brownian motion in $D_u$, for excursions starting from $x\in \prt D_u$.
The measure $\wh H^0(\exc(\zeta-)\in dx)$ is the distribution of the end point of the excursion under $\wh H^0$. It is also the L\'evy measure for the Cauchy process. Let
\begin{align*}
\mu_\eps(dx) = \wh H^0\left(\sup_{t\in[0,\zeta)} \Im \exc_t < |\log(1-\eps)|, \exc(\zeta-)\in dx\right).
\end{align*}
The measure $\mu_\eps$ is the L\'evy measure for a pure jump process, say $C^\eps_t$, similar to the Cauchy process, except that it has fewer big jumps.
We can choose a right continuous version of
$C^\eps$, and so $ \sup_{0\leq s \leq t} |C^\eps_s| \to 0$, a.s., as $t\to 0$.
We let $\wh A^{k,\eps}_t$ be the solution to the equation analogous to \eqref{j4.10},
\begin{align}\label{j5.2}
\wh A^{k,\eps}_t = \wh A^{k,\eps}_0 +  C^\eps_t + \int_0^t \tan \theta_k(\wh A^{k,\eps}_s) ds.
\end{align}
An argument analogous to that showing \eqref{sept14.3} and
\eqref{sept15.5} proves that for every fixed $\eps>0$,
\begin{align}\label{j5.4}
\sup_{0\leq s \leq 2\pi} \left|\wh A^{k,\eps}_{s/\mu_{0,k}} - a_{ s}\right|\to 0,
\end{align}
in probability, as $k\to \infty$.

Recall the definition of $\calE^k_\eps=\{\exc^1, \exc^2,\dots\}$ from the beginning of the proof. We claim that for any fixed $\eps \in (0,1)$ and $n$, the joint distribution of $(\exc^1_0, \exc^2_0, \dots, \exc^n_0)$ converges to that of a sequence of $n$ i.i.d. random variables with distribution $\nu/2$, as $k\to \infty$.

We will present a special construction of $(\exc^1_0, \exc^2_0, \dots, \exc^n_0)$.
The heuristic meaning of the construction is the following. Excursions that reach $\ball(0, 1-\eps)$ occur as a Poisson process with constant intensity on the local time scale. If we have already observed $\exc^1, \exc^2, \dots, \exc^m$, the next excursion will occur after an exponential waiting time on the local time scale, where the local time has the same distribution as the process $\wh A^{k,\eps}_t $. This process, suitably rescaled, behaves like the function $a_t$ according to \eqref{j5.4}. By \eqref{j5.5}, a point on the boundary chosen in a uniform manner on the $a_t$ scale has the distribution $\nu/2$. We will also need a fact that, on small time intervals, exponential density is almost constant. The process
$\wh A^{k,\eps}_t $ represents rapid rotation along the unit circle and the exponential clock will chose a point on the circle according to the distribution very close to $\nu/2$, because the almost constant exponential density (on small intervals) is transformed into the density of $\nu/2$ by the function $a_t$.

Suppose that excursions $\exc^1, \exc^2, \dots, \exc^m$ have been already generated, for some $m\geq 0$. If $m\geq 1$, let $T_m$ be the time when $\exc^m$ ended. If $m=0$ then we take $T_0 $ to be the first hitting time of $\prt D_*$ by $X^k$.
Unless stated otherwise, every new random object introduced below will be assumed to be independent from all random objects constructed so far.

By conformal invariance of excursion laws,
\begin{align*}
H^x \left( \exists t\in[0,\zeta): \exc_t \in \ball(0,1-\eps)\right)
=
\wh H^0\left(\exists t\in[0,\zeta): \Im \exc_t \geq |\log(1-\eps)|\right),
\end{align*}
and the last quantity is equal to $1/|\log(1-\eps)|$ (see \cite{BurBook} for the justification of both claims).

Consider an exponential random variable $\alpha$ with density $f_\alpha(t)$ and expected value $|\log(1-\eps)|$, independent of objects constructed so far.
For every $\delta>0$ there exists $c_3>0$
so small that for any interval $[t, t+ c_3]$ and any $s_1,s_2\in[t, t+ c_3]$, we have $f_\alpha(s_1) /f_\alpha(s_2) \in (1-\delta, 1+\delta)$.
We generate an integer-valued random variable $N$, such that $\P(N= j) =
\P(\alpha\in[j 2\pi/\mu_{0,k},(j+1)2\pi/\mu_{0,k}])$ for $j\geq 0$.
We consider a solution to \eqref{j5.2} with $\wh A^{k,\eps}_0 = \arg
X^k_{T_m} + N 2\pi/\mu_{0,k}$. We generate a random variable $\alpha'$ with the same distribution
as $\alpha$ conditioned to be in $[N, N+1)$.
Note that we can take $\delta>0$ so small and then let $k$ be so large
that, in view of \eqref{j5.5} and \eqref{j5.4}, the distribution of $\exp(i\wh A^{k,\eps}_{\alpha'-N})$ is arbitrarily close to $\nu/2$.

We generate an excursion $\bar \exc^{m+1}$ with the
(probability) distribution $H^0(\,\cdot\, \mid \exists
t\in[0,\zeta): \exc_t \in \ball(0,1-\eps))$. We let $\wh
\exc^{m+1}_t =  \exp(i\wh A^{k,\eps}_{\alpha'-N})\,\bar
\exc^{m+1}_t$.

In view of the preceding remarks, the distribution of $\wh \exc^{m+1}_0$ is arbitrarily close to
$\nu/2$, conditional on the trajectories of $\exc^1, \dots, \exc^m$, if $k$ is arbitrarily large. According to our construction, the joint distribution of  $(\exc^1, \dots, \exc^m, \wh \exc^{m+1})$ is the same as that of $(\exc^1, \dots, \exc^m, \exc^{m+1})$. We conclude that
for any fixed $\eps \in (0,1)$ and $n$, the joint distribution of $(\exc^1_0, \exc^2_0, \dots, \exc^n_0)$ converges to that of a sequence of $n$ i.i.d. random variables with distribution $\nu/2$, as $k\to \infty$.
This completes the proof of part (i) of the theorem.

\smallskip

(ii)
We will generalize Example \ref{m2.1}.
Suppose that $h$ is positive on $\ol D_*$, harmonic in $D_*$ and
Lipschitz on $\ol D_*$. Then $1/h$ is Lipschitz on $\ol D_*$. Set
$h_k(z)=h((1-1/k)z)$ and suppose $\mu_{0,k} \to \infty$.
Then $(h_k,\mu_{0,k}) \leftrightarrow \theta_k
\in \calT$ as in Theorem \ref{aug27.0}, satisfy the assumptions of part (i)
and the conclusions of that part of the theorem with the given $h$.
\end{proof}

\medskip

\begin{proof}[\bf Proof of Theorem \ref{j15.7}]
(i) Suppose that $X_0$ has the stationary distribution with density $h$. Then
for every $t>0$,
\[
\begin{array}{rclcl}
\displaystyle \E \left[c(t)\right] &= & \displaystyle \E\left[\int_0^t |f'(X_s)|^2 ds\right]
 &= & \displaystyle \int_0^t \E \left[|f'(X_s)|^2 \right]ds
 = \int_0^t \int_{D_*} |f'(x)|^2 h(x) dx ds\\
& =  & \displaystyle  t \int_{D_*} |f'(x)|^2 h(x) dx
& = &\displaystyle  t \int_D  \bar h(x) dx <\infty.
\end{array}
\]
It follows that under the stationary distribution, $\zeta = \infty$, a.s. This implies that $\zeta = \infty$, $\P_x$-a.s., for almost all $x\in D_*$.

Consider an $x\in D_*$ and $r>0$ so small that $\ol{\ball(x,r)}\subset D_*$. The exit distributions from
$\ball(x,r)$ are mutually absolutely continuous for any two points $y,z \in \ball(x,r)$. Let $T$ be the exit time from $\ball(x,r)$. It is easy to see that $c(T) < \infty$, $\P_y$-a.s., for every $y \in \ball(x,r)$. Since
$\zeta = \infty$, $\P_y$-a.s., for at least one $y \in \ball(x,r)$, it follows that this claim holds for all $y \in \ball(x,r)$. The claim holds for all
balls such that $\ol{\ball(x,r)}\subset D_*$ so
$\zeta = \infty$, $\P_y$-a.s., for all $y\in D_*$.

\smallskip
(ii) This part follows easily from conformal invariance of Brownian motion killed
 upon leaving a domain.

\smallskip

(iii) This claim follows from the interpretation of the stationary distribution as the long time occupation measure, the definition of $\wh h$ and the ``clock'' $c(t)$.
We sketch the easy argument.
For an arbitrarily small $\eps>0$ and $x, y \in D_*$ we can find $r>0$ so small that
\begin{align*}
&\lim_{t\to \infty}
\frac{\int_0^t \bone_{\{Y_t \in \ball(f(x),r)\}} ds}
{\int_0^t \bone_{\{Y_t \in \ball(f(y),r))\}} ds}
\leq
\lim_{t\to \infty}
\frac{
\sup_{z\in f^{-1}(\ball(f(x),r))} |f'(z)|^2
\int_0^t \bone_{\{X_t \in f^{-1}(\ball(f(x),r))\}} ds}
{\inf_{z\in f^{-1}(\ball(f(y),r))} |f'(z)|^2
\int_0^t \bone_{\{X_t \in f^{-1}(\ball(f(y),r))\}} ds}\\
&\leq
\lim_{t\to \infty}
\frac{
\sup_{z\in f^{-1}(\ball(f(x),r))} |f'(z)|^2 (1+\eps)|f'(x)|^{-2}
\int_0^t \bone_{\{X_t \in \ball(x,r)\}} ds}
{\inf_{z\in f^{-1}(\ball(f(y),r))} |f'(z)|^2
(1-\eps)|f'(y)|^{-2}
\int_0^t \bone_{\{X_t \in \ball(y,r)\}} ds}\\
&\leq
\lim_{t\to \infty}
\frac{
\sup_{z\in f^{-1}(\ball(f(x),r))} |f'(z)|^2 (1+\eps)|f'(x)|^{-2}
\sup_{z \in \ball(x,r)} h(z)}
{\inf_{z\in f^{-1}(\ball(f(y),r))} |f'(z)|^2
(1-\eps)|f'(y)|^{-2}
\inf_{z \in \ball(y,r)} h(z)}.
\end{align*}
If we let $\eps,r\to 0$ then the right hand side converges to $h(x)/h(y)$. Hence, the limsup of the left hand side is at most  $h(x)/h(y)$. A similar argument shows that the liminf of the left hand side is at least $h(x)/h(y)$. This implies that the stationary density for $Y$ is proportional to $h \circ f ^{-1}$. Hence, it must be equal to $\wh h$.

\smallskip

(iv)
It follows from the definition of the ``clock'' $c(t)$ and the ergodic theorem that, a.s.,
\begin{align*}
\lim_{t\to \infty} \frac{c(t)}{t} =
\int_{D_*} |f'(x)|^2 h(x) dx
=\|\bar h\|_{L^1(D)}.
\end{align*}
We have already proved \eqref{j15.2}. That claim and the above formula imply for $z = f(0)$,
\begin{align}\label{m10.2}
\lim_{t\to \infty} \frac{\barg^* (Y_t -z) }{t}
&=
\lim_{t\to \infty} \frac{\barg^* X_{c^{-1}(t)} }{t}
=
\lim_{t\to \infty} \frac{\barg^* X_{t} }{c(t)}
=
\lim_{t\to \infty} \frac{\barg^* X_{t} }{t} \cdot
\frac t {c(t)} \\
&=
\lim_{t\to \infty} \frac{\barg^* X_{t} }{t}
\lim_{t\to \infty} \frac t {c(t)}
= \frac{\mu_0}{ \|\bar h\|_{L^1(D)}}
= \frac{\mu(0)}{ \|\bar h\|_{L^1(D)}}. \nonumber
\end{align}

Next we  prove \eqref{j16.1}.
Suppose that $f=\tau$ is a one-to-one analytic map of $D_*$ onto $D_*$
such that $\tau(0) = z$, as in Lemma \ref{ju21.1}. Then $\tau$ is a
M\"obius transformation. Let $\wh h = h \circ
\tau/  \| h\circ \tau \|_1$, $\wh \mu_0=\mu(z)/ \| h\circ \tau \|_1$, and $\wh \theta = \theta \circ
\tau $.
Then by Lemma \ref{ju21.1}, $\wh\theta \lra (\wh h, \wh \mu_0)$.
If $\bar h = \wh h \circ \tau^{-1}=h/\|h\circ \tau\|_1$ then
\begin{align*}
\|\bar h\|_1 = 1/ \|h\circ\tau\|_1 .
\end{align*}
By \eqref{m10.2}
\begin{align}\label{m10.1}
\lim_{t\to \infty} \frac{\arg^* (X_t-z)} t =
\frac{\wh\mu_0}{ \|\bar h\|_1} =
\wh\mu_0 \|h\circ\tau\|_1 = \mu(z).
\end{align}

Finally, we prove \eqref{f6.2} in full generality along the same lines as in \eqref{m10.2}. For any $z\in D$, by \eqref{j16.1},
\begin{align*}
&\lim_{t\to \infty} \frac{\barg^* (Y_t -z) }{t}
=
\lim_{t\to \infty} \frac{\barg^* (X_{c^{-1}(t)}-f^{-1}(z)) }{t}
=
\lim_{t\to \infty} \frac{\barg^* (X_{t} -f^{-1}(z))}{c(t)} \\
&=
\lim_{t\to \infty} \frac{\barg^* (X_{t} -f^{-1}(z))}{t} \cdot
\frac t {c(t)}
=
\lim_{t\to \infty} \frac{\barg^* (X_{t}-f^{-1}(z)) }{t}
\lim_{t\to \infty} \frac t {c(t)}
= \frac{\mu(f^{-1}(z))}{ \|\bar h\|_1}.
\end{align*}

(v) Let
$\theta$ correspond to $(h,\mu_0)$. Let $Y$ be constructed as in
\eqref{j18.4}-\eqref{feb5.2}. Then it is easy to see that $Y$
satisfies conditions (a) and (b) of part (v).

(vi)
This follows directly from the It\^o formula and Theorem \ref{j25.1}.
\end{proof}

\medskip

We now present an example showing that a conformal mapping may not always map an ORBM in one planar domain to another ORBM,
in the sense of Theorem \ref{j15.7}.

\begin{example}\label{E:4.2}

Let $S$ be a two-dimensional infinite wedge with corner at the origin $0$ and  angle $0<\alpha<2\pi$. Consider $\theta_1,\theta_2\in(-\pi/2,  \pi/2)$ and suppose that each $\theta_k$ represents the angle of reflection on one of the two sides of the wedge,
measured from the inward normal toward the origin $0$. In \cite{VW}, it was shown that there exists
a strong Markov process  that behaves like Brownian motion in the interior of the wedge and reflects instantaneously at the boundary with the oblique angle of reflection given by $\theta_k$.
This process,  called obliquely reflected Brownian motion in \cite{VW},
 is characterized as the unique solution to the corresponding submartingale problem away from the vertex.
It was shown \cite{VW} that the process enters 0 in a finite time and then stays there forever (i.e., it cannot be continued as a Markov process beyond that time)
if and only if  $\beta:=(\theta_1+\theta_2)/\alpha \geq 2$.
Let $D$ be an acute triangle obtained by truncation of the infinite wedge $S$.
Assume that $\theta_1$ and $\theta_2$ are such that  $\beta \geq 2$,
 set $\theta_3=0$ on the  edge opposite to $0$, and assume that
  the analogues of $\beta$ at the other two vertices are strictly less than 2. Let $f$ be a conformal mapping from the unit disk $D_*$ onto the Jordan domain $D$ and note that it extends to a homeomorphism
from $\overline D_*$ onto $\overline D$. Let $\theta (x)$ be the pre-image of the $\theta$-function on $\partial D$ by $f$.
Then $\theta$ is a piecewise constant function on $\partial D_*$ taking values in $(-\pi/2, \pi/2)$.
Thus by Theorem \ref{j15.5}, the ORBM $X$ in $D_*$ with reflection angle $\theta$ is a continuous, conservative Markov process having stationary distribution $h(x) dx$.
Consequently, $Z_t=f(X_t)$ is a continuous, conservative Markov process   on $\overline D$.
 The process $Z$ is an extension of killed Brownian motion in $D$ modulo a time change in the sense that for
every $t\geq 0$ and $\tau_t = \inf\{s\geq t: Z_s \in \prt D\}$, the process $\{Z_s, s\in [t, \tau_t)\}$ is a time change
of  Brownian motion killed upon exiting $D$. Let
$\wh \tau_t= \inf\{s\geq t: Z_s =0\}$ for $t\geq 0$.
Then the process $\{Z_s, s\in [t,  \wh \tau_t)\}$ is a time change
of  the obliquely reflected Brownian motion in $D$ killed upon hitting $0$.
More precisely,  let $x_0=f^{-1} (0)$, $\sigma_{x_0}=\inf\{t\geq 0: X_t=x_0\}$,
$c(t)=\int_0^t |f'(X_s)|^2 ds$ and $c^{-1}(t)=\inf\{s: c(s)>t\}$.
 Then $Y_t=f(X_{c^{-1}(t)})$, $t\in [0, \sigma_{x_0})$,
is  obliquely reflected Brownian motion in $D$ killed upon hitting $0$.
 The result in \cite{VW} and Theorem \ref{j15.7} imply that $c(\sigma_{x_0} ) = \int_0^{\sigma_{x_0}} |f'(X_s)|^2 ds <\infty $
 but $  \int_0^{\sigma_{x_0}+\eps} |f'(X_s)|^2 ds =\infty $ a.s. for every $\eps >0$,
 and that $h\circ f^{-1}\notin L^1(D)$.   \qed
\end{example}

\medskip

\begin{proof}[\bf Proof of Theorem \ref{j18.3}]
(i) The argument given in the proof of Theorem \ref{j15.7}(i)
which shows that $\zeta = \infty$, a.s., applies verbatim in
the present case because we have assumed that $\|\bar h\|_{L^1 (D)} <
\infty$.

Every harmonic function $h_k$ is bounded because $\theta_k$ is
continuous and takes values in $(-\pi/2, \pi/2)$. Hence, the
function $\bar h_k := h_k \circ f^{-1}$ is also bounded. Since
$D$ is bounded, it follows that $\|\bar h_k\|_{L^1(D)} < \infty$.
Once again, the argument given in the proof of Theorem \ref{j15.7}
(i) applies and shows that $\zeta_k = \infty$, a.s., for all
$k$.

(ii) Recall the representation of $X$ as the Poisson point
process on the space $\R_+ \times \calC_{D_*}$ (see Definition
\ref{jan22.1}). Excursion laws are conformally invariant in the
sense of the transformation in
\eqref{j18.4}-\eqref{feb5.2} by \cite[Prop.
10.1]{BurBook} so $Y$ can be represented as a Poisson point
process on $\R_+ \times \calC_{D}$. In other words, $Y$ is an ERBM
and it only remains to identify the corresponding $(\bar
\nu(dx), \bar H^x)_{x\in \prt D}$. We can arbitrarily set the
excursion intensity $\bar\nu$ to be $\bar \nu(A) =
\nu(f^{-1}(A)) $ for $A\subset \prt D$, in view of Remark
\ref{j3.1} (ii).

We will find the matching normalization for $\bar H^x$. Fix some $z\in D$ and
suppose that $r>0$ is very small. The Green function
$G_x(\,\cdot\,)$ in $D$ has the property that
\begin{align}\label{jan22.2}
\lim_{r\to 0} \frac{\inf_{y\in \prt \ball(z,r))} G_y(z)}
{\sup_{y\in \prt \ball(z,r))} G_y(z)} =
\lim_{r\to 0} \frac{\inf_{y\in \prt \ball(z,r))} G_y(z)}
{|\log r|} = 1.
\end{align}
Let $T_A$ denote the hitting time of $A$. Recall that
$G_x(\,\cdot\,)$ is the density of the expected occupation time
for Brownian motion in $D$ killed upon exiting from $D$. Also,
by Remark \ref{j3.1} (v), the density of the expected
occupation time for $\bar H^x$ is $\bar c_x K_x(\,\cdot\,)$. Hence,
for $x\in \prt D$, by the strong Markov property of $\bar H^x$,
\begin{align*}
\bar c_x K_x(z) = \int_{\prt \ball(z,r)} G_y(z) \bar H^x (X(T_{\prt \ball(z,r)}) \in dy).
\end{align*}
This and \eqref{jan22.2} imply that, as $r\to 0$,
\begin{align}\label{jan22.5}
|\log r| \bar H^x (T_{\prt \ball(z,r)}<\infty)
= \bar c_x K_x(z) + o(1) .
\end{align}
An analogous formula holds for excursion laws $H^x$ in $D_*$, with the corresponding constants $c_x$ equal to each other, by rotation invariance.
Let $N(dx, z, r, D,t)$ be the number of excursions of the ERBM in $D$ (here $D$ can be also $D_*$), which started from $dx\subset \prt D$ before time $t$ and hit $\prt \ball(z,r)$ before their lifetime. It is easy to see that
\begin{align}\label{jan22.4}
\lim_{r\downarrow 0, \eps\downarrow 0}
\lim_{t\to \infty}
\frac{N(dx, z, r, D,t)}{N(dx, z, r(1+\eps), D,t)} =1.
\end{align}
By the ergodic theorem,
\begin{align*}
\lim_{r\to 0}\lim_{t\to \infty}
\frac{N(dx, 0, r, D_*,t)}{N(dy, 0, r, D_*,t)}
\end{align*}
exists and is equal to $\nu(dx)/\nu(dy)$. The
fact that small balls are mapped by $f$ onto regions very close to balls, \eqref{jan22.4}, and the
definition of $Y$ as a transform of $X$ imply that for $Y$ we have
\begin{align*}
\lim_{r\to 0} \lim_{t\to \infty}
\frac{N(dx, f(0), r, D,t)}{N(dy, f(0), r, D,t)}
=\frac{\nu(f^{-1}(dx))}{\nu(f^{-1}(dy))}
= \frac{\bar\nu(dx)}{\bar\nu(dy)}.
\end{align*}
This in turn implies that all $\bar c_x$ in \eqref{jan22.5} must be equal to each other so, in view of Remark \ref{j3.1} (iii), we may take all of them to be equal to 1.

(iii) The processes $X^k$ converge to $X$ in the sense of finite dimensional distributions according to Theorem \ref{j18.1}.
A stronger assertion follows from the proof of that theorem. Fix some $\eps>0$ and let $\exc^{k,n}$ be the $n$-th excursion of the process $X^k$ which hits the ball $\ball(0, 1-\eps)$, and let $T^{k,n}_\eps$ be the hitting time of the ball. Then the joint distributions of $\{\exc^{k,n}_t, t\in [T^{k,n}_\eps, \zeta)\}$, $n\geq 1$, $\eps >0$, $\eps \in \Q$, converge as $k\to \infty$, in the Skorokhod topology. By the Skorokhod lemma, we can assume that
$\{\exc^{k,n}_t, t\in [T^{k,n}_\eps, \zeta)\}$, $n\geq 1$, $\eps >0$, $\eps \in \Q$, converge a.s., as $k\to \infty$, in the Skorokhod topology. Hence, $X^k_t \to X_t$ for almost all $t\geq 0$ simultaneously, a.s.

The function $f$ is Lipschitz continuous inside every disc $\ball(0,1- \rho)$, $\rho\in (0,1)$.
This implies that for every $\eps>0$ and $n$, the images of the excursions $f(\exc^{k,n}_t)$ converge as $k\to \infty$, a.s., in the Skorokhod topology over their lifetimes to the corresponding excursion of $Y$.
It will suffice to show that for every fixed $t>0$, the clocks $c_k(t)$ converge to $c(t)$ in probability (note that the clocks are monotone functions).

Let
\begin{align}
c(t) &= \int_0^t |f'(X_s)|^2 ds,  \qquad \text{  for  } t\geq 0, \nonumber \\
Y(t) &= f(X_{c^{-1}(t)}), \qquad \text{  for  } t\in[0,\infty), \label{m3.1}\\
c_k(t) &= \int_0^t |f'(X^k_s)|^2 ds,  \qquad \text{  for  } t\geq 0, \nonumber \\
Y^k(t) &= f\left(X^k_{c_k^{-1}(t)}\right), \qquad \text{  for  } t\in[0,\infty).\label{m3.2}
\end{align}
Then $Y$ and $Y^k$'s have distributions as specified in the statement of the theorem.

We will assume for a moment that $X^k_0$'s and $X_0$ have stationary distributions.
Let $D_\eps = D_* \setminus \ball(0, 1-\eps)$.
By assumption (i)
\begin{align}\label{m3.3}
\int_{D_*} |f'(x)|^2 h(x) dx = \int_D h\circ f^{-1} dx < \infty.
\end{align}
By assumption $D$ is bounded, so that $\int_{D_*} |f'|^2 dx= {\rm
Area}(D) < \infty$ and by
the proof of Theorem \ref{j18.1}, $h_k$ converges uniformly to $h$.
Thus
\begin{align}
\sup_k \int_{D_*} |f'(x)|^2 h_k(x) dx < \infty,
\label{m3.4}
\end{align}
and, moreover,
\begin{align}\label{m3.5}
\lim_{\eps \downarrow 0} &\int_{D_\eps} |f'(x)|^2 h(x) dx = 0,
\end{align}
and
\begin{align}
\lim_{\eps \downarrow 0} \sup_k &\int_{D_\eps} |f'(x)|^2 h_k(x) dx = 0.\label{m3.6}
\end{align}
For $\eps>0$ (suppressed in the notation), let
\begin{align*}
\bar c(t) &= \int_0^t |f'(X_s)|^2
\bone_{\{X_s \in D_\eps\}} ds, \qquad
\wh c(t)  = \int_0^t |f'(X_s)|^2
\bone_{\{X_s \in \ball(0, 1-\eps)\}} ds, \\
\bar c_k(t) &= \int_0^t |f'(X^k_s)|^2
\bone_{\{X^k_s \in D_\eps\}} ds, \qquad
\wh c_k(t)  = \int_0^t |f'(X^k_s)|^2
\bone_{\{X^k_s \in \ball(0, 1-\eps)\}} ds.
\end{align*}

Fix some $t\geq 0$ and arbitrarily small $p_1, \delta >0$. It follows from \eqref{m3.3}-\eqref{m3.6} that there exists $\eps_1>0$ such that for $\eps\in(0,\eps_1)$ and all $k$,
\begin{align*}
\E \left[\bar c(t)\right]
&= \E\left[\int_0^t |f'(X_s)|^2 \bone_{\{X_s \in D_\eps\}} ds\right]
= \int_0^t \E \left[|f'(X_s)|^2 \bone_{\{X_s \in D_\eps\}} \right] ds \\
& = \int_0^t \int_{D_\eps} |f'(x)|^2 h(x) dx ds
=  t \int_{D_\eps} |f'(x)|^2 h(x) dx ds < p_1 \delta,
\end{align*}
and
\begin{align*}
\E \bar c_k(t)
&= \E\int_0^t |f'(X^k_s)|^2 \bone_{\{X^k_s \in D_\eps\}} ds
= \int_0^t \E \left(|f'(X^k_s)|^2 \bone_{\{X^k_s \in D_\eps\}} \right) ds \\
& = \int_0^t \int_{D_\eps} |f'(x)|^2 h_k(x) dx ds
=  t \int_{D_\eps} |f'(x)|^2 h_k(x) dx ds < p_1 \delta.
\end{align*}
It follows that for $\eps\in(0,\eps_1)$ and all $k$,
\begin{align}\label{m3.7}
\P(\bar c(t) \geq \delta) \leq p_1
\qquad \text{  and  } \qquad
\P(\bar c_k(t) \geq \delta) \leq p_1.
\end{align}

For almost all $s>0$, $X^k_s \to X_s$, a.s.,
and $\P(X_s \in \prt \ball(0, 1-\eps)) = 0$. Hence,
for almost all $s>0$, a.s.,
\begin{align*}
\lim_{k\to \infty}
|f'(X^k_s)|^2
\bone_{\{X^k_s \in \ball(0, 1-\eps)\}}
= |f'(X_s)|^2
\bone_{\{X_s \in \ball(0, 1-\eps)\}},
\end{align*}
and, therefore, by the bounded convergence theorem, a.s.,
\begin{align*}
\lim_{k\to \infty} \wh c_k(t)
=  \lim_{k\to \infty}
\int_0^t |f'(X^k_s)|^2
\bone_{\{X^k_s \in \ball(0, 1-\eps)\}} ds
 = \int_0^t |f'(X_s)|^2
\bone_{\{X_s \in \ball(0, 1-\eps)\}} ds
= \wh c(t).
\end{align*}
This and \eqref{m3.7} imply that for every fixed $t>0$, a.s.,
\begin{align*}
\lim_{k\to \infty} c_k(t) =  c(t),
\end{align*}
because $\delta$ and $p_1$ can be chosen arbitrarily close to 0.

We can remove the assumption that the processes are in  the stationary distribution as in the proof of Theorem \ref{j15.7} (i).

(iv)
This can be proved just as part (iii)
of Theorem \ref{j15.7}.

(v)
Let $h^* = \wh h \circ f$. Then $h^*$ is a positive harmonic function in $D_*$ and so $\| h^*\|_1=\pi h^*(0)<\infty$.
 Let $h = h^*/\|h^*\|_1$. By assumption, $h$ is Lipschitz continuous on $\ol D_*$ and strictly positive on $\prt D_*$. Let
$h_k(z)= (1-2^{-k})^{1/2} h ((1-2^{-k} )z)$. Then $h_k$  is a sequence of positive harmonic functions in $D_*$ with $L^1$ norm equal to 1 and
  $C^2$ on $\ol D_*$, such that  $h_k\to h$ uniformly on compact subsets of $D_*$, and  both $h_k$ and $1/h_k$ are $\lambda$-Lipschitz on $\prt D_*$
  for some $\lambda >0$ when $k$ is sufficiently large.
Let $\mu_{0,k}=k$, and let $\theta_k$ correspond to $(h_k,\mu_{0,k})$. Let $Y^k$'s and $Y$ be constructed as in the statement of Theorem \ref{j18.3}. Then it is easy to see that the stationary distribution for ERBM $Y$ has density $\wh h$.
\end{proof}

\medskip

\begin{proof}[\bf Proof of Theorem \ref{j17.3}]
Let $D^k_* = f^{-1}(D_k)$. It is easy to see that $D^k_*$ converge to $D_*$ in the sense that for every $r<1$ there exists $k_0$ such that $ \ball(0,r) \subset D^k_*$ for $k\geq k_0$.
Set $x_0=f^{-1}(y_0)=f_k^{-1}(y_0)$,  $a_0=f(0)$ and $a_k=f_k(0)$.
Then $a_k \to a_0$.
Let $h_k= \bar h\circ f_k/\|\bar h \circ f_k\|_1=\bar h \circ f_k/(\pi
\bar h(a_k))$,
and let $\theta_k \lra ( h_k, \mu_0)$. Note that $h_k$ are smooth and bounded on $\ol
D_*$ and therefore $\theta_k$ are smooth on $\prt D_*$ and
take values in $(-\pi/2, \pi/2)$.
Let $h=\bar h\circ f /\|\bar h \circ f\|_1=\bar h \circ f /(\pi \bar h(a_0))$,
and let $\theta \lra ( h, \mu_0)$.
Then $h_k$ converges to $h$ uniformly on compact subsets of $D_*$ and
by \eqref{ju20.3}, ~$\theta_k(z)$ converges to $\theta(z)$ uniformly
on compact subsets of $D_*$. Since the closed unit ball in
$L^{\infty} (\prt D_*; |dx|)=L^1 (\prt D_*; |dx|)^*$ is compact  in the weak-* topology,
it follows that $\theta_k$ converges  to $\theta$ in the  in the
weak-* topology in  $L^{\infty}(\prt D_*; |dx|)$.
Let $X^k$ be the solution to \eqref{j13.1} corresponding to
$\theta_k$ and starting from $x_0 = f^{-1}(y_0)$ and let $X$ be constructed as in Theorem \ref{j15.5}, relative to $\theta$ and also
starting from $x_0 = f^{-1}(y_0)$. Let
\begin{align}
c(t) &= \int_0^t |f'(X_s)|^2 ds
 \quad \hbox{and} \quad
Y(t)  = f(X_{c^{-1}(t)})  \quad \text{for  } t\in[0,\infty), \label{feb11.2}\\
c_k(t) &= \int_0^t |f_k'(X^k_s)|^2 ds    \quad \hbox{and} \quad
Y^k(t) = f_k\left(X^k_{c_k^{-1}(t)}\right)  \quad \text{for  } t\in[0,\infty).\label{feb11.3}
\end{align}
Then $Y$ and $Y^k$'s have distributions as specified in the statement of the theorem.

We will assume for a moment that $X^k_0$'s and $X_0$ have stationary distributions.
According to Theorem \ref{j15.5} (i), the processes $\{X^k_s, 0\leq s\leq t\} $ converge weakly to $\{X_s, 0\leq s\leq t\} $ in $M^\calT_1$ topology.
By the Skorokhod theorem, we can assume that all these processes are defined on the same probability space and $\{X^k_s, 0\leq s\leq t\} $ converge almost surely to $\{X_s, 0\leq s\leq t\}$ in $M^\calT_1$ topology.

Let $D_\eps = D_* \setminus \ball(0, 1-\eps)$.
We have
\begin{align}\label{feb11.10}
&\int_{D_*} |f'(x)|^2 h(x) dx =\frac{1}{\pi \bar h(a_0)} \int_D \bar h dx < \infty, \\
\sup_k&\int_{D_*} |f'_k(x)|^2 h_k(x) dx = \frac{1}{\pi \bar h(a_k)} \sup_k \int_{D_k} \bar h dx < \infty,
\label{feb11.11}
\end{align}
and, moreover, as in \eqref{m3.5} and \eqref{m3.6}
\begin{align}\label{feb8.1}
\lim_{\eps \downarrow 0} &\int_{D_\eps} |f'(x)|^2 h(x) dx = 0, \\
\lim_{\eps \downarrow 0} \sup_k &\int_{D_\eps} |f'_k(x)|^2 h_k(x) dx = 0.\label{feb8.2}
\end{align}
For $\eps>0$ (suppressed in the notation), let
\begin{align*}
\bar c(t) &= \int_0^t |f'(X_s)|^2
\bone_{\{X_s \in D_\eps\}} ds, \qquad
\wh c(t)  = \int_0^t |f'(X_s)|^2
\bone_{\{X_s \in \ball(0, 1-\eps)\}} ds, \\
\bar c_k(t) &= \int_0^t |f_k'(X^k_s)|^2
\bone_{\{X^k_s \in D_\eps\}} ds, \qquad
\wh c_k(t)  = \int_0^t |f_k'(X^k_s)|^2
\bone_{\{X^k_s \in \ball(0, 1-\eps)\}} ds.
\end{align*}

Fix some $t\geq 0$ and arbitrarily small $p_1, \delta >0$. It follows from \eqref{feb8.1}-\eqref{feb8.2} that there exists $\eps_1>0$ such that for $\eps\in(0,\eps_1)$ and all $k$,
\begin{align*}
\E \left[\bar c(t)\right]
&= \E\left[\int_0^t |f'(X_s)|^2 \bone_{\{X_s \in D_\eps\}} ds\right]
= \int_0^t \E \left[|f'(X_s)|^2 \bone_{\{X_s \in D_\eps\}} \right] ds \\
& = \int_0^t \int_{D_\eps} |f'(x)|^2 h(x) dx ds
=  t \int_{D_\eps} |f'(x)|^2 h(x) dx ds < p_1 \delta,
\end{align*}
and
\begin{align*}
\E \left[\bar c_k(t)\right]
&= \E\left[\int_0^t |f'_k(X^k_s)|^2 \bone_{\{X^k_s \in D_\eps\}} ds\right]
= \int_0^t \E \left[|f'_k(X^k_s)|^2 \bone_{\{X^k_s \in D_\eps\}} \right] ds \\
& = \int_0^t \int_{D_\eps} |f'_k(x)|^2 h_k(x) dx ds
=  t \int_{D_\eps} |f'_k(x)|^2 h_k(x) dx ds < p_1 \delta.
\end{align*}
It follows that for $\eps\in(0,\eps_1)$ and all $k$,
\begin{align}\label{feb11.1}
\P(\bar c(t) \geq \delta) \leq p_1
\qquad \text{  and  } \qquad
\P(\bar c_k(t) \geq \delta) \leq p_1.
\end{align}

For any fixed $\eps>0$,
there is $k_0\geq 1$ such that
 \begin{align}\label{feb11.12}
&\sup_{x\in \ball(0, 1-\eps)}
\left(
|f'(x)|^2 h(x) \lor \sup_{k\geq k_0}  |f'_k(x)|^2 h_k(x)
\right) < \infty.
\end{align}
For every fixed $s>0$, $X^k_s \to X_s$, a.s.,
and $\P(X_s \in \prt \ball(0, 1-\eps)) = 0$. Hence,
for every fixed $s>0$, a.s.,
\begin{align*}
\lim_{k\to \infty}
|f_k'(X^k_s)|^2
\bone_{\{X^k_s \in \ball(0, 1-\eps)\}}
= |f'(X_s)|^2
\bone_{\{X_s \in \ball(0, 1-\eps)\}},
\end{align*}
and, therefore, by the bounded convergence theorem, a.s.,
\begin{align*}
\lim_{k\to \infty} \wh c_k(t)
=  \lim_{k\to \infty}
\int_0^t |f_k'(X^k_s)|^2
\bone_{\{X^k_s \in \ball(0, 1-\eps)\}} ds
 = \int_0^t |f'(X_s)|^2
\bone_{\{X_s \in \ball(0, 1-\eps)\}} ds
= \wh c(t).
\end{align*}
This and \eqref{feb11.1} imply that for every fixed $t>0$, a.s.,
\begin{align}\label{feb11.4}
\lim_{k\to \infty} c_k(t)
=  c(t),
\end{align}
because $\delta$ and $p_1$ can be chosen arbitrarily close to 0.

It follows easily from the definition \eqref{n25.71} of convergence in  $M_1^\calT$ topology
and continuity of $f$  on $\ol D_*$ that convergence of
 $X^k$  to $X$ in $M_1^\calT$ topology implies convergence of  $f(X^k)$  to $f(X)$ in $M_1^\calT$ topology. This is because the transformation $f$ affects only the first components of the pairs
$(y_n(s),t_n(s))$ and $(y(s),t(s))$ in \eqref{n25.71}.
When the clocks are changed, the second components are affected as well. Then we use \eqref{feb11.4}
to conclude that
$Y^k$ converge to $Y$ in $M_1^\calT$ topology.

We can remove the assumption that the processes are in  the stationary distribution as in the proof of Theorem \ref{j15.7} (i).
\end{proof}

\medskip

\begin{proof}[\bf Proof of Theorem \ref{T:3.8}]
Take  a   sequence of $C^2$ functions   $\theta_k: \prt D_* \to (-\pi/2, \pi/2)$
that converges to $\theta \in \calT$ in weak-* topology as elements of the dual space of $L^1(\prt D_*)$.
Let $X^k$ be ORBM on $D_*$  that satisfies \eqref{n19.1}.
By Theorem  \ref{j15.5}(i), $X^k$ converges weakly in $M^\calT_1$-topology to $X$, so does $f(X^k)$ to $f(X)$.
Define
$$ c_k(t)= \int_0^t |f'(X^k_s)|^2 ds \quad \hbox{and} \quad c (t)= \int_0^t |f'(X_s)|^2 ds.
$$
By an argument similar to that proving  \eqref{feb11.4}, we can show that
$\lim_{k\to \infty} c_k(t)=c(t)$ a.s. for every fixed $t>0$.
Consequently by the argument as in the second to the last paragraph in the proof of Theorem \ref{j17.3},
 $f\left(X^k_{c_k^{-1}(t)}\right)$ converges weakly in $M^\calT_1$-topology to $f(X_{c^{-1}(t)})$.
It is easy to see that  $f(X_{c^{-1}(t)})$ has stationary distribution with density $\bar h$.
Since $f$ is smooth on $\ol D_k$ and $\theta_k \circ f^{-1}$ converges to $\theta \circ f^{-1} \in \calT$
in weak-* topology as elements of the dual space of $L^1(\prt D_*)$, it follows from Theorem \ref{j15.5}
that $f(X_{c^{-1}(t)})$ is the ORBM on $D_*$ with reflection angle $\theta \circ f^{-1} $.
 \end{proof}

\medskip

\begin{proof}[\bf Proof of Theorem \ref{j18.8}]
This theorem can be proved just like
Theorem \ref{j17.3}. All we have to check
is whether the following claims hold:
\eqref{feb11.10}, \eqref{feb11.11}, \eqref{feb8.1},
\eqref{feb8.2}, and \eqref{feb11.12}. They are all easily
seen to hold in the present context.
\end{proof}

\medskip

\begin{example}\label{feb15.1}
We will sketch an example of a bounded domain $D$, an oblique angle of reflection $\theta$ and the corresponding ORBM with a stationary measure whose density $h$ is not in $L^1(D)$. The construction is a typical fractal-type argument; a construction similar in spirit can be found in Section 4 of \cite{BBlife}. We will not supply a formal proof because it would require a lot of space and the claim is rather specialized.

Let $D_0 = (0,1)^2$, and for $k\geq 1$ and small $r_k\in(0, 2^{-k-2})$ (to be specified later), let
\begin{align*}
D_k &= \ball (  2^{-k} -i 2^{-k}, 2^{-k -2}),\\
D_k' &= (2^{-k} - r_k, 2^{-k} + r_k) \times (- 2^{-k}  , 2^{-k}),\\
D &= D_0 \cup \bigcup_{k \geq 1} (D_k \cup D_k').
\end{align*}
The boundary $\prt D$ is smooth except for a countable number of points. We will specify the reflection angle relative to the inward normal vector $\n$ at each boundary point where $\n$ is well defined. For all points
 $x\in \prt D \cap (\prt D_0 \cup \prt D_k)$,
$k\geq 0$, we let $\theta (x) = 0$. In other words, the reflection is in the normal direction at the points on the boundary of the square $D_0$ and on the (arcs of the) circles $\prt D_k$.

It remains to define the angle of reflection for the part of $\prt D$ which lies on the sides of very thin channels $D_k'$. To make the example simple, we let the
angle of reflection be $\pi/2$ or $-\pi/2$, at $x\in \prt D \cap \prt D_k'$, $k\geq 1$, so that the reflected process is pushed down towards $D_k$. It would be more accurate to say that the process is teleported to $D_k$ if it hits  the side of a channel $\prt D \cap \prt D_k'$
because it has a jump that takes it to $\prt D_k$.

Heuristically speaking, the ratio of the average amounts of time spent by ORBM in $D_k$ and $D_0$ can be made arbitrarily large by making $r_k$ sufficiently small. The reason is that ORBM will jump to $D_k$ when it hits the boundary of $D_k'$. Going the other way is much harder---the process has to go though the very thin channel connecting $D_k$ and $D_0$ without hitting the sides of the channel. Let $a_k$ be the ratio of the average amounts of time spent by ORBM in $D_k$ and $D_0$. If we make all $a_k \geq 1$ then $\sum_{k\geq 1} a_k = \infty$ and it follows that there is no stationary probability distribution for ORBM. Every stationary measure has to have infinite mass.

It is clear that the ORBM described above is well defined as long as it does not hit $(0,0)$. An elementary argument can be used to show that the ORBM will not hit $(0,0)$ at a finite time, a.s., if we make the channels sufficiently thin (i.e., $r_k$'s sufficiently small).
\qed
\end{example}

\medskip

\begin{proof}[\bf Proof of Theorem \ref{j17.10}]
Parts (i) and (ii) are special cases of Theorems 1 and 2 of \cite{Aikawa}.

For part (iii),
let $D$ be the image of the unit disk by the map $F(z)=\sqrt{1-z}$
and let $h(w)=\Re((1+z)/(1-z))$ where $z=F^{-1}(w)$.
Then for the region $C$ in the disk given by $1-|z|^2>|1-z|$
(an approximate cone),
\begin{align*}
\int_D h(w) dw=\int_C \Re ((1+z)/(1-z)) |F'(z)|^2 dz \ge \int_C | 1-z|^{-2} dz/4,
\end{align*}
since $\Re ((1+z)/(1-z))= (1-|z|^2)/|1-z|^2$. This latter integral is infinite
by integrating in polar coordinates centered at $z=1$.
\end{proof}

\medskip

\noindent{\bf Acknowledgments}.
We are grateful to Gerald Folland, Jean-Francois Le Gall, Yoichi Oshima, Uwe Schmock and S.~R.~S.~Varadhan for very useful advice.

\bibliographystyle{amsplain}
\bibliography{oblique}

\end{document}